 
\documentclass[11pt,reqno]{amsart}
\usepackage[utf8]{inputenc}
\usepackage[leqno]{amsmath}

\usepackage[a4paper, lmargin=0.10\paperwidth, rmargin=0.10\paperwidth, tmargin=0.10\paperheight, bmargin=0.10\paperheight]{geometry}

\usepackage{amssymb,amscd,amsmath,graphicx,enumerate}
\usepackage{hyperref}
\usepackage{graphicx}
\usepackage{pgf,tikz}
\usepackage{mathtools}

\theoremstyle{definition}
\newtheorem{Theorem}{Theorem}[section]
\newtheorem{lemma}{Lemma}[section]
\newtheorem{claim}{Claim}[section]

\newtheorem{definition}{Definition}[section]
\newtheorem{remark}{Remark}[section]
\newtheorem{notation}{Notation}[section]
\numberwithin{equation}{section}
\newtheorem{question}{Question}[section]
\newtheorem{con}{Conjecture}[section]

\DeclareMathOperator{\reg}{reg}

 \newcommand{\I}{\mathcal{I}}

\begin{document}

\title{Regularity of powers of $d$-sequence (parity) binomial edge ideals of unicycle graphs}

\author[Marie Amalore Nambi]{Marie Amalore Nambi}
\address{Department of Mathematics, Indian Institute of Technology Hyderabad, Kandi, Sangareddy - 502285}
\email{ma19resch11004@iith.ac.in}

\author[Neeraj Kumar]{Neeraj Kumar}
\email{neeraj@math.iith.ac.in}

\subjclass[2020]{{13F65, 05E40, 13D02}} 
\keywords{$d$-sequence, regularity, binomial edge ideal, parity binomial edge ideal}
\date{Submitted. March 10, 2023. Revised. December 11, 2023}

\begin{abstract} We classify all unicycle graphs whose edge-binomials form a $d$-sequence, particularly linear type binomial edge ideals. We also classify unicycle graphs whose parity edge-binomials form a $d$-sequence. We study the regularity of powers of (parity) binomial edge ideals of unicycle graphs generated by $d$-sequence (parity) edge-binomials. 
\end{abstract}

\maketitle

\section*{Introduction}

Let $G$ be a simple graph on $n$ vertices. Let $S=k[x_1,\ldots,x_n,y_1,\ldots,y_n]$ be a polynomial ring on $2n$ variables over an infinite field $k$. For a given graph $G$, one can associate binomial ideals in $S$; namely
\[
J_G = ( f_{ij} \; | \; \{i,j\} \in E(G) ) \text{ and } \mathcal{I}_G = ( g_{ij} \; | \; \{i,j\} \in E(G) )
\]
where $f_{ij}=x_iy_j-x_jy_i$ and  $g_{ij}=x_ix_j-y_jy_i$. The binomials $f_{ij}$ and $g_{ij}$ are called edge-binomial and parity edge-binomial, respectively. The binomial ideals $J_G$ and $\mathcal{I}_G$ are called \emph{binomial edge ideal} and \emph{parity binomial edge ideal} respectively. 

The notion of the binomial edge ideal was introduced by Herzog et al. in \cite{HH} and independently by Ohtani in \cite{O2011}. Herzog et al. have shown that the binomial edge ideal has natural connections to the study of conditional independence ideals that are suitable to model robustness in contexts of algebraic statistics (cf. \cite{HH}, \cite{JN}). One may also view the binomial edge ideal as a generalization of an ideal generated by a set of $2$-minors of $2 \times n$ matrix of indeterminates. The binomial edge ideal of path graphs coincides with the ideal of adjacent minors of a $2 \times n$ matrix of indeterminates. In (cf. \cite{DES}), Diaconis, Eisenbud, and Sturmfels studied the ideal generated by all the adjacent $2$-minors of a $2 \times n$ generic matrix. The authors also investigated the ideal generated by corner minors of a $2 \times n$ matrix of indeterminates which coincides with the binomial edge ideal of star graphs (cf. \cite{DES}). For a complete graph $K_n$, the homogeneous coordinate ring $S/J_G$ can be seen as a Segre variety given by the image of Segre product $\mathbb{P}^1 \times \mathbb{P}^{n-1}$ of projective spaces. The parity binomial edge ideal was introduced by Kahle et al. in \cite{TCT}. The authors studied primary decomposition, mesoprimary decomposition, Markov bases, and radicality of parity binomial edge ideals (cf. \cite{TCT}). In (\cite[Corollary 6.2]{B2018}), Bolognini et al. proved that if $G$ is a bipartite graph, then the binomial edge ideal $J_G$ coincides with the parity binomial edge ideal $\mathcal{I}_G$, Lovász–Saks–Schrijver ideal $L_G(2)$ (cf. \cite{LSS89}), and the Permanental edge ideal $\Pi_G$ (cf. \cite{HMSW}). More results on the equality of $J_G$ and $\mathcal{I}_G$ with $\Pi_G$, see \cite[Remark 3.4]{A2021LSS}.

The Rees algebra of an ideal encodes many asymptotic properties of that ideal. An ideal in a commutative ring is said to be of \emph{linear type} if their Rees and symmetric algebras are isomorphic; equivalently, the defining ideal of Rees algebra is generated by linear forms. The notion of $d$-sequence was introduced and initially studied by Huneke in \cite{H1980, H1982}. The author proved that an ideal generated by $d$-sequence in a commutative ring is of linear type (cf. \cite{H1980}). Villarreal characterized graphs for which edge ideals are of the linear type in \cite{V1995}, namely, edge ideals are of linear type if and only if the graph is a tree or has a unique cycle of odd length. Researchers have recently been interested in characterizing linear type (parity) binomial edge ideal. 

The binomial edge ideal $J_G$ is a complete intersection if and only if each component of $G$ is a path (cf. \cite{EHH,R2013}). Jayanthan et al. characterized graphs whose binomial edge ideal is an almost complete intersection in \cite[Theorems $4.3$, and $4.4$]{JAR}. One has the following strict inclusions: \emph{almost complete intersection ideal} $\implies$ \emph{d-sequence ideal} $\implies$ \emph{ideal of linear type} \cite{H1980,JAR} and \cite[page no. 341]{H1981SS}. Jayanthan et al. proposed the following conjecture.

\begin{con}
\cite[Conjecture 4.17]{JAR}\label{conj:LT} If the given graph is a tree or a unicyclic graph, then the binomial edge ideal is of linear type.
\end{con}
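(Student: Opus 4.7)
The plan is to exhibit, for any tree or unicyclic graph $G$, an ordering of the edge-binomials under which they form a $d$-sequence; by Huneke's theorem \cite{H1980}, this will imply that $J_G$ is of linear type. I would separate the argument into the tree case and the cycle case, reducing a general unicyclic graph to the cycle by peeling leaves one at a time.

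For a tree $T$, I would induct on the number of edges. Any tree has a leaf $v$ with unique neighbor $u$; set $e=\{u,v\}$ and $T'=T\setminus\{v\}$. By induction the edge-binomials of $T'$ can be ordered as a $d$-sequence, and I would append $f_e$ at the end and check the colon conditions $J_{T'} : (f_e\, f_{e'}) = J_{T'} : f_{e'}$ for each $e' \in E(T')$. The key observation is that the variables $x_v, y_v$ appear only in $f_e$, so writing $S=S'[x_v,y_v]$ with $J_{T'}\subseteq S'$, the element $f_e = x_u y_v - y_u x_v$ is linear in the new variables with coefficients $x_u, y_u$ lying in the coefficient ring $S'$. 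This should reduce the colon computation to a statement about $S'/J_{T'}$, where $x_u, y_u$ can be checked to be nonzerodivisors via the known primary decomposition of binomial edge ideals.

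For a unicyclic graph $G$ with unique cycle $C_n$, I would first iterate the leaf-stripping of the tree case to reduce to $G=C_n$. For the cycle itself, I would order the edges as $f_i = f_{i,i+1}$ for $i=1,\ldots,n-1$ and $f_n = f_{n,1}$. Since $f_1,\ldots,f_{n-1}$ generate the binomial edge ideal of the path $P_{n-1}$, which is a complete intersection by \cite{EHH}, they form a regular sequence and hence a $d$-sequence. The nontrivial remaining task is to verify
\[
(f_1,\ldots,f_{n-1}) : (f_n\, f_i) \;=\; (f_1,\ldots,f_{n-1}) : f_n
\]
for each $i \leq n$. One natural route is to compute a Gröbner basis of $J_{C_n}$ in a suitable lex order so that the colon becomes combinatorial; another is to use the known primary decomposition of $J_{C_n}$ via cut-sets and check the identity prime by prime.

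The main obstacle I expect is precisely this cycle step. Peeling a leaf is clean because the leaf's variables appear in only one generator, so adjoining $f_e$ behaves almost like adjoining a pair of polynomial variables. The closing edge $f_{n,1}$ introduces no new vertex but creates a global syzygy around the cycle, so $(f_1,\ldots,f_{n-1}) : f_n$ is genuinely larger than $(f_1,\ldots,f_{n-1})$; the content of the $d$-sequence condition is that further coloning by $f_n$ does not enlarge this ideal. Controlling this uniformly in $n$, and ensuring that the tree-stripping ordering concatenates with the cycle ordering to give a global $d$-sequence rather than just a sequence that is a $d$-sequence on each subgraph separately, will be the delicate technical point of the argument.
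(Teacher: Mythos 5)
You are attempting to prove something that the paper itself states only as a conjecture (Conjecture~\ref{conj:LT}, quoted from \cite{JAR}) and does not prove; the paper merely \emph{supports} it by determining exactly which unicyclic graphs admit a $d$-sequence ordering of their edge-binomials (Theorem~\ref{thmU.1}). Your overall strategy --- produce a $d$-sequence ordering for \emph{every} tree and \emph{every} unicyclic graph and invoke Huneke's theorem --- cannot succeed, because the implication ``generated by a $d$-sequence $\Rightarrow$ linear type'' is strict and the class of graphs whose edge-binomials form a $d$-sequence is a \emph{proper} subclass of trees and unicyclic graphs. This is precisely the content of \cite[Theorem 0.1]{AN} for trees (only the classes $\mathcal{T}_m$ and $\mathcal{H}_m$ work) and of Lemmas~\ref{Lemma2.T} and~\ref{Pro:2} here for unicyclic graphs: in particular, Case 6 of Lemma~\ref{Pro:2} states that if $G$ has a vertex of degree at least three that does not lie on the cycle, then \emph{no} ordering of the edge-binomials is a $d$-sequence. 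So your claim that leaf-peeling reduces an arbitrary unicyclic graph to the cycle case is refuted inside the paper itself: attach a pendant vertex to a vertex at distance two from the cycle of a triangle-with-tail and the resulting graph is unicyclic, yet admits no $d$-sequence ordering whatsoever, even though $J_G$ is conjecturally still of linear type.

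The failure is located exactly in your leaf-peeling step. Appending $f_e$, $e=\{u,v\}$ with $v$ a leaf, at the end of a $d$-sequence $a_1,\dots,a_{n-1}$ for $T'$ imposes the conditions $(a_1,\dots,a_i):a_{i+1}f_e=(a_1,\dots,a_i):f_e$ for \emph{every} initial segment $i\le n-2$, not just colons against the full ideal $J_{T'}$. By Remark~\ref{Rem1.MCI}(a), $J_{H_i}:f_e=J_{(H_i)_e}$, which strictly contains $J_{H_i}$ as soon as $u$ has two neighbours $k,l$ in $H_i$ (it acquires $f_{kl}$); compatibility of these colon ideals with the earlier ones is exactly what forces, via Lemma~\ref{Lemma2.T}, all ``branching'' to be concentrated near a single vertex. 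Your proposed reduction to $x_u,y_u$ being nonzerodivisors on $S'/J_{T'}$ is also not available: whenever $u$ is an internal vertex of $T'$, the primary decomposition of $J_{T'}$ has minimal primes containing $x_u,y_u$, so these are zerodivisors. The parts of your plan that do work are exactly those already in the literature: paths are complete intersections \cite{EHH}, and the cycle ordering you describe is handled in \cite[Theorem 4.4]{JAR}. Any genuine attack on the conjecture must exploit that linear type is strictly weaker than being generated by a $d$-sequence --- for instance by analyzing the defining ideal of the Rees algebra directly --- rather than routing everything through $d$-sequences.
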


In \cite{A2022}, Kumar has shown that $J_G$ is of linear type for the closed graphs. In \cite{AN}, Amalore Nambi and Kumar have characterized all trees whose edge-binomials form a $d$-sequence; in particular, $J_G$ is of a linear type. In \cite{A2021LSS}, the author characterized graphs whose parity binomial edge ideals are complete intersection and almost complete intersection. By $\mathcal{T}_m$, we denote the class of trees on $n$ vertices having degree sequences $(m,2,\dots, 1)$ or $(m,1,\dots, 1)$, where $m \geq 2$. In Section \ref{sec.d-seq}, we classify unicyclic graphs whose (parity) edge-binomials form a $d$-sequence. We summarize the results below. 

\vspace{1mm}

\begin{Theorem} \label{thmU.1}
Let $S=k[x_1,\ldots,x_n,y_1,\ldots,y_n]$ be a polynomial ring. Let $G$ be a connected unicyclic graph and $H \in \mathcal{T}_m$ be a tree. Then the edge-binomials of $G$ form a $d$-sequence in $S$ if and only if the corresponding unicyclic graph has the following form
\begin{enumerate}
    \item[(a)] $G$ is obtained by adding an edge between a pendant vertex of $H$ and the center of $H$;
    \item[(b)] $G$ is obtained by adding an edge between the center of $H$ and an internal vertex of $H$;
    \item[(c)] $G=C_n$, where $n \geq 3$.
\end{enumerate}
Moreover, if char$(k) \neq 2$, and $G$ has one of the above forms, then the parity edge-binomials form a $d$-sequence in $S$. 
\end{Theorem}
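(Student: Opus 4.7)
My plan is to prove the two directions of the $d$-sequence classification separately, and then to transfer the argument to the parity setting under the characteristic hypothesis.

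For sufficiency, I would in each case fix an explicit linear ordering of the edges and verify the $d$-sequence axioms $(f_1,\dots,f_i):f_{i+1}f_k=(f_1,\dots,f_i):f_k$ for all $k\ge i+1$ by direct colon computation. In case (c), $G=C_n$, the natural cyclic ordering of the edges reduces the verification to a clean closed-form description of the intermediate colon ideals, obtainable from the standard subgraph-deletion formulae for colons of $J_G$. In cases (a) and (b), I would order the edges of the underlying tree $H\in\mathcal{T}_m$ first, inheriting the $d$-sequence property from \cite{AN}, and append the cycle-completing edge $e$ last; the only genuinely new conditions are those involving $e$, and the geometric constraint that $e$ is incident with the center of $H$ controls the requisite $S$-polynomial reductions. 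Since any initial segment of a $d$-sequence remains one, this is the only plausible way to sew the cycle edge onto a known $d$-sequence.

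For necessity, I would first argue that if the edge-binomials of $G$ form a $d$-sequence in some ordering, then the cycle-completing edge can be moved to the end of that ordering without destroying the $d$-sequence property, exploiting the symmetry of the cycle within the primary decomposition $J_G=\bigcap_U P_U(G)$. The initial segment then realizes a spanning tree $T$ whose edge-binomials form a $d$-sequence, forcing $T\in\mathcal{T}_m$ by \cite{AN}. A finite case analysis over the ways a single edge can be adjoined to $T\in\mathcal{T}_m$, producing explicit non-$d$-sequence witnesses $g\cdot f_{ij}\in(f_1,\dots,f_i)$ with $g\notin(f_1,\dots,f_i):f_{ij}$ for each configuration not on the list, narrows the possibilities to (a), (b), (c). I expect this enumeration to be the main obstacle: both the reordering step and the witness construction for each disallowed configuration are combinatorially delicate, although both ultimately reduce to manipulations of the subgraph-deletion formulae for colon ideals of $J_G$.

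Finally, for the parity statement under $\mathrm{char}(k)\ne 2$, I would separate the bipartite and non-bipartite cases on the list. When the unicycle $G$ is bipartite (cases (a), (b) with even-length cycle and case (c) with $n$ even), the involution $x_i\leftrightarrow y_i$ applied on one color class identifies $\mathcal{I}_G$ with $J_G$ as ideals of $S$, so the $d$-sequence property transfers verbatim from the binomial case already established, without using the characteristic assumption. For the non-bipartite cases (odd-length cycle in (a), (b), or $G=C_{2k+1}$ in (c)), I would repeat the colon-ideal verification directly on the parity binomials $g_{ij}$, relying on the parity primary decomposition of \cite{TCT}, whose shape is available precisely under $\mathrm{char}(k)\ne 2$; the required identities mirror those in the binomial case step by step, with the only change being that the parity-closed subgraphs replace the connected components in the combinatorial bookkeeping.
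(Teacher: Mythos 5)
Your overall architecture (sufficiency by appending the cycle-closing edge to a known $d$-sequence ordering of a tree in $\mathcal{T}_m$; necessity by reducing to the spanning tree and invoking \cite{AN}; parity by bipartite transfer plus a direct check in the odd-girth case) matches the paper's, which proves the theorem by combining Theorem \ref{Thm2.U}, Lemma \ref{Pro:2} and Theorem \ref{Rem2.Parity2}. However, your necessity argument has a genuine gap at the reordering step. You propose to \emph{move} the cycle-completing edge to the end of a given $d$-sequence ordering ``exploiting the symmetry of the cycle within the primary decomposition.'' Permuting a $d$-sequence does not in general preserve the $d$-sequence property, and the primary decomposition of $J_G$ gives you no handle on the order-sensitive colon conditions; I do not see how this step could be carried out. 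It is also unnecessary: the paper's Lemma \ref{Lemma2.an} shows that in \emph{any} $d$-sequence ordering the cycle must close only at the very last step, because if $H_i$ is the first intermediate graph containing the cycle with $i<n$, then the edge $e_i$ is not a bridge of $H_i$, so by the two Mohammadi--Sharifan formulas (Remark \ref{Rem1.MCI}(a) versus (b)) the colon $J_{i-1}:a_ia_j$ acquires the extra path-generators $g_{P,t}$ that $J_{i-1}:a_j$ lacks, violating the $d$-sequence condition. With that, the initial segment is automatically a $d$-sequence of edge-binomials of a spanning tree and no reordering lemma is needed. A second, smaller issue: \cite{AN} classifies $d$-sequence trees into \emph{two} families ($\mathcal{T}_m$ and a second class $\mathcal{H}_m$), so your case analysis must also enumerate the ways of adjoining an edge to a tree in $\mathcal{H}_m$ (the paper's Cases 4 and 5); restricting to $T\in\mathcal{T}_m$ misses configurations that do yield graphs of type (b). You would also need the analogue of Lemma \ref{Lemma2.T} (every later edge must meet the first ``non-regular'' edge) to kill the configurations with a degree-$\geq 3$ vertex off the cycle, rather than ad hoc witnesses.

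On the parity statement your bipartite reduction is exactly the paper's, but for the odd-girth case your plan to rerun the verification against the parity primary decomposition of \cite{TCT} is heavier than what is needed and leaves the role of $\mathrm{char}(k)\neq 2$ diffuse. The paper's route is cleaner: since the cycle-closing parity binomial is last, every intermediate graph $H_i$ with $i\leq n-1$ is a tree, hence bipartite, so all colons $(g_1,\ldots,g_i):g_{i+1}$ with $i<n-1$ are computed from the binomial case via the isomorphism $\phi$ of Remark \ref{Rem1.phi}; the characteristic hypothesis enters only once, in the final condition $(g_1,\ldots,g_{n-1}):g_n^2=(g_1,\ldots,g_{n-1}):g_n$, via Lemma \ref{Lemma1.RD}(b). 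I would recommend restructuring your parity argument around that single use of the hypothesis.
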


The above theorem supports Conjecture \ref{conj:LT}. Moreover, we expect that parity binomial edge ideals of trees and unicyclic graphs are of linear type.

\vspace{1mm}

From a combinatorial commutative algebra point of view, one tries to understand the Castelnuovo-Mumford regularity of (parity) binomial edge ideal of a graph via combinatorial data of that graph. Matsuda and Murai obtained the bounds for the regularity of binomial edge ideal of graphs in terms of the number of the longest induced path and the number of vertices of that graph, \cite{MM}. Several authors investigated the regularity of binomial edge ideal of various class graphs (see \cite{JNR, CR20, MM, S21} for a partial list). Kumar \cite{A2021reg} obtained the lower bound for the regularity of parity binomial edge ideal of connected graphs. We have computed the regularity of $\I_G$ of graphs whose parity edge-binomials form a $d$-sequence, in Lemma \ref{lemma5.PU1} and Lemma \ref{lemma5.PU2}. In Section \ref{sec.reg}, we obtain the regularity of the product of the parity binomial edge ideal of disjoint union of paths and the binomial edge ideal of a complete graph in Theorem \ref{Thm3.parityIJ}.

\vspace{1mm}

Our primary interest is to study the regularity of powers of (parity) binomial edge ideals. Cutkosky, Herzog, and Trung (and independently Kodiyalam) proved that for any homogeneous ideal $I$, the regularity of $I^s$ is asymptotically a linear function in $s$ (cf. \cite{CHT1999, V2000}). Raghavan in \cite{R94} generalized the notion of $d$-sequence to quadratic sequence and studied the depth of powers of an ideal generated by quadratic sequences. Note that $d$-sequence implies quadratic sequences, but the converse need not be true \cite{R94}. Jayanthan et al. in \cite{JAR20} obtained the bounds for the regularity of powers of almost complete intersection binomial edge ideals using the quadratic sequence and related ideals approach. Furthermore, the authors provided the regularity of powers of binomial edge ideals of star graphs. Shen and Zhu in \cite{SZ} obtained explicit formulas for the regularity of powers of an almost complete intersection (parity) binomial edge ideals. Amalore Nambi and Kumar in \cite{AN} obtained explicit expression for the regularity of powers of binomial edge ideals of trees whose edge-binomial forms a $d$-sequence. Ene et al. in \cite{VRT2021} obtained the regularity of powers of binomial edge ideals of closed graphs. 

\vspace{1mm}

A \textit{clique} is a subset $U$ of $V(G)$ such that the induced subgraph $G[U]$ is a complete graph. If a vertex $v$ of $G$ is contained in only one maximal clique, it is called \emph{free vertex} of the graph; otherwise, \emph{internal vertex} of the graph. By $i(G)$, we denote the number of internal vertices of $G$. In Sections \ref{sec.BEI} and \ref{sec.PBEI}, we obtained a precise expression for the regularity of powers of (parity) binomial edge ideals of unicyclic graphs whose (parity) edge-binomial forms a $d$-sequence. We summarize the results below.

 \vspace{1mm}

\begin{Theorem} \label{U1.1}
Let $H \in \mathcal{T}_m$ be a tree, and $G$ be a unicyclic graph. Let $S=k[x_1,\ldots,x_n,y_1,\ldots,y_n]$ be a polynomial ring and let $J_G$ be the binomial edge ideal of $G$ and $\I_G$ be the parity binomial edge ideal of $G$. 
\begin{itemize}
\item[(i)] If $G$ is obtained by adding an edge between a pendant vertex of $H$ and the center of $H$, then for all $s\geq 1$, one has 

\begin{equation*}
\begin{split}
    \reg {S}/{J_{G}^s} &= \begin{cases}
    2s + i(G) -1 & \text{if girth}(G) = 3\\
    2s+ i(G) -3 & \text{if girth}(G) \geq 4,
\end{cases} \\
    \reg {S}/{\mathcal{I}_{G}^s} &= \begin{cases}
    2s + i(G) & \text{if girth}(G) = 3\\
    2s+ i(G) -2 & \text{if odd-girth}(G) \geq 5.
\end{cases} \\
\end{split}
\end{equation*}

\item[(ii)] If $G$ is obtained by adding an edge between the center of $H$ and an internal vertex of $H$, then one has 
\begin{equation*}
\begin{split}
    \reg {S}/{J_{G}^s} &= \begin{cases}
    2s + i(G) -1 & \text{if girth}(G) = 3\\
    2s+ i(G) -2 & \text{if girth}(G) \geq 4,
\end{cases} \\
\text{for all } s\geq 2,\\
    \reg {S}/{\mathcal{I}_{G}^s} &= \begin{cases}
    2s + i(G) -1 & \text{if girth}(G) = 3\\
    2s+ i(G) -2 & \text{if odd-girth}(G) \geq 5,
\end{cases} \\
\text{for all }  s\geq 1.
\end{split}   
\end{equation*}

\end{itemize}
\end{Theorem}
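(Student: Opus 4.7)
The strategy is induction on $s$, exploiting the $d$-sequence property of the (parity) edge-binomials established in Theorem \ref{thmU.1}. The base case $s=1$ (and $s=2$ for the binomial part of (ii)) rests on Lemma \ref{lemma5.PU1}, Lemma \ref{lemma5.PU2} and their analogues for $J_G$, which identify $\reg(S/J_G)$ and $\reg(S/\I_G)$ with invariants read off from the longest induced path in $G$, and show that the claimed formulas specialize correctly at this base level. The lower bound for general $s$ then follows from the standard inequality $\reg(S/I^s)\geq 2s+\reg(S/I)-2$ for ideals $I$ generated in degree $2$, combined with the base case computation.

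For the upper bound I would use the $d$-sequence identity: if $f_1,\ldots,f_r$ is an ordering of the (parity) edge-binomials of $G$ as a $d$-sequence generating $I$, then for each $k$
\[
(I^s : f_k) \;=\; I^{s-1} \;+\; \bigl((f_1,\ldots,f_{k-1}) : f_k\bigr),
\]
and the colon ideal on the right is a sum of linear forms and (parity) edge-binomials of a sub-graph obtained from $G$ by edge deletion or vertex identification. Iterating the short exact sequence
\[
0 \longrightarrow \frac{S}{I^s : f_r}(-2) \xrightarrow{\,\cdot f_r\,} \frac{S}{I^s} \longrightarrow \frac{S}{I^s + (f_r)} \longrightarrow 0
\]
and applying the inductive hypothesis on $s$, together with a secondary induction on the underlying tree $H\in\mathcal{T}_m$, bounds $\reg(S/I^s)$ above by the maximum regularity of modules of the form $S/(I^{s-1}+L_k)$. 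The dichotomy between girth $3$ and girth $\geq 4$ (respectively odd-girth $\geq 5$) emerges at the step of removing the edge that closes the cycle: in the triangle case both endpoints of that edge remain internal vertices of the residual graph and contribute an extra $+1$ to the regularity, while for longer cycles no such shift occurs.

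The main obstacle will be the precise bookkeeping of the colon ideals $(f_1,\ldots,f_{k-1}) : f_k$ and the regularity of each $S/(I^{s-1}+L_k)$ at every step. One must verify that the sub-graphs produced by these edge operations remain of a form covered by the induction---namely subtrees in $\mathcal{T}_m$ or smaller unicyclic graphs of the shape described in Theorem \ref{thmU.1}---and track how $i(G)$ changes. The restriction of the binomial part of (ii) to $s\geq 2$ reflects that at $s=1$ the base regularity is governed by a different extremal substructure from the inductive pattern, so the transition $s=1\to s=2$ must be handled by a separate one-off calculation. For the parity case, the hypothesis $\mathrm{char}(k)\neq 2$ ensures that the parity analogues of the same colon computations go through; whenever a product of a parity edge ideal with a complete-graph binomial edge ideal appears as an intermediate object, Theorem \ref{Thm3.parityIJ} can be invoked directly to compute its regularity.
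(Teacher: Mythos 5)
Your overall strategy---induction on $s$ combined with an induction along the $d$-sequence, the colon identity of Lemma \ref{Lemma1.PD}, and short exact sequences obtained by adjoining one generator at a time---is exactly the machinery the paper uses: Theorem \ref{U1.1} is assembled from Theorems \ref{Thm4.UC1}, \ref{Thm4.U2}, \ref{Thm5.PPUC1} and \ref{Thm5.PPU2} together with Remark \ref{rem3.iG}. Two points in your write-up, however, do not survive scrutiny. First, the colon identity is misquoted: Lemma \ref{Lemma1.PD} gives $\bigl((f_1,\ldots,f_{k-1})+I^s\bigr):f_k=\bigl((f_1,\ldots,f_{k-1}):f_k\bigr)+I^{s-1}$, with the prefix $(f_1,\ldots,f_{k-1})$ present on the left; this is precisely why the paper proves the stronger statement about $\reg S/\bigl((f_1,\ldots,f_i)+J_G^s\bigr)$ for \emph{every} $i$ (by descending induction on $i$, with the case $i=n-1$ handled separately) rather than about $\reg S/J_G^s$ alone. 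Your formulation $I^s:f_k=I^{s-1}+\bigl((f_1,\ldots,f_{k-1}):f_k\bigr)$ is false for $k>1$, and without the strengthened inductive statement the iteration does not close.

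Second, the lower bound. The inequality $\reg(S/I^s)\geq 2s+\reg(S/I)-2$ is not a standard fact for ideals generated in degree $2$: regularity of powers is not monotone in this sense, and for instance the edge ideal of the $5$-cycle has $\reg(S/I)=2$ but $\reg(S/I^2)=3<2\cdot 2+2-2$. The paper never proves a separate lower bound; it extracts equalities at every stage from the equality clauses of Lemma \ref{Lemma1.Reg}, which requires computing the regularity of the colon term \emph{exactly}. That exact computation is the real content your sketch glosses over: one identifies $\bigl((f_1,\ldots,f_i):f_{i+1}\bigr)+I^{s-1}$ as $I+J_{K_{n'}}$ for a smaller complete graph, passes to the sequence $0\to S/(I\cap J)\to S/I\oplus S/J\to S/(I+J)\to 0$, and proves the claim $I\cap J=J\ast\langle x_{k_0},y_{k_0},J_{\mathcal{P}}\rangle$ so that \cite[Theorem 3.1]{AN} (resp.\ Theorem \ref{Thm3.parityIJ} in the parity case) applies. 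You correctly anticipate the role of Theorem \ref{Thm3.parityIJ} and the $s\geq 2$ threshold in part (ii), but the intersection computation and the equality bookkeeping are where the proof actually lives; as proposed, your argument yields at best an upper bound.
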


\vspace{2mm}

 \noindent {\bf Acknowledgement.} The first author is financially supported by the University Grant Commission, India. The second author is partially funded by MATRICS grant, project no. MTR/2020/000635, from Science and Engineering Research Board (SERB), India.

\section{Preliminaries}

In this section, we recall definitions from graph theory and commutative algebra. Throughout this article, $S$ denotes the polynomial ring $k[x_1,\ldots,x_n,y_1,\ldots,y_n]$ over a field $k$ unless otherwise stated.  

\vspace{2mm}

Let $M$ be a finitely generated graded $S$-module. Let $\mathbf{F}_{\bullet}$ be a minimal graded $S$-free resolution of $M$:
$$\mathbf{F}_{\bullet}:0 \longrightarrow \bigoplus_{j\in \mathbb{Z}} S(-p-j)^{\beta_{p,p+j}(M)}  \stackrel{\phi_{p}} \longrightarrow   \cdots \longrightarrow  \bigoplus_{j\in \mathbb{Z}} S(-j)^{\beta_{0,j}(M)} \stackrel{\phi_{0}} \longrightarrow M \longrightarrow  0.$$
Where, $S(-i-j)$ denotes the graded free module of rank $1$ obtained by shifting the degrees in $S$ by $i+j$, and $\beta_{i,i+j}(M)$ denotes the $(i,i+j)$-th graded Betti number of $M$ over $S$. From the minimal free resolution, one can obtain an important invariant called \emph{Castelnuovo-Mumford regularity} (or simply  \emph{regularity}) of $M$ over $S$, denoted by $\reg_S M$, is defined as
$$\reg_SM \coloneqq \max \{j \mid \beta_{i,i+j} \neq 0 \text{ for some } i\}. $$

For convenience, we shall use $\reg M$ instead of $\reg_SM$. We state a regularity lemma \cite[Corollary 20.19]{Eisenbud} that will be used throughout the article.

\begin{lemma}[Regularity lemma] \label{Lemma1.Reg}
Let $ 0 \rightarrow M_1 \rightarrow M_2 \rightarrow M_3   \rightarrow  0$ be a short exact sequence of finitely generated graded $S$-modules. Then the following holds.
\begin{enumerate}[(a)]
   \item $\reg{M_1} \leq \max \{\reg M_2, \reg M_3 +1\}$. The equality holds if $\reg M_2 \neq \reg M_3$.
   
   \item $\reg{M_2} \leq \max \{\reg M_1, \reg M_3\}$. The equality holds if $\reg M_1 \neq \reg M_3 +1$.
     
    \item $\reg{M_3} \leq \max \{\reg M_1 -1, \reg M_2\}$. The equality holds if $\reg M_1 \neq \reg M_2$. 
\end{enumerate}
\end{lemma}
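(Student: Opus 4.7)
The plan is to reduce the statement to a degree-tracking exercise in the long exact sequence of graded Tor modules. I will use the formulation of Castelnuovo--Mumford regularity coming from the minimal free resolution in the preliminaries, namely
\[
\reg M = \max\{\, j : \operatorname{Tor}_i^S(M,k)_{i+j} \neq 0 \text{ for some } i \geq 0\,\},
\]
together with the long exact sequence obtained by applying $- \otimes_S k$ to $0 \to M_1 \to M_2 \to M_3 \to 0$:
\[
\cdots \to \operatorname{Tor}_{i+1}^S(M_3,k) \to \operatorname{Tor}_i^S(M_1,k) \to \operatorname{Tor}_i^S(M_2,k) \to \operatorname{Tor}_i^S(M_3,k) \to \operatorname{Tor}_{i-1}^S(M_1,k) \to \cdots
\]
Restricting this graded sequence to the degree $i+j$ piece is the entire engine of the argument.

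For inequality (a), I fix $(i,j)$ with $j > \max\{\reg M_2, \reg M_3 + 1\}$ and examine the three-term window
\[
\operatorname{Tor}_{i+1}^S(M_3,k)_{i+j} \to \operatorname{Tor}_i^S(M_1,k)_{i+j} \to \operatorname{Tor}_i^S(M_2,k)_{i+j}.
\]
The right-hand term vanishes because $j > \reg M_2$, and after rewriting $\operatorname{Tor}_{i+1}^S(M_3,k)_{i+j} = \operatorname{Tor}_{i+1}^S(M_3,k)_{(i+1)+(j-1)}$, the left-hand term vanishes because $j - 1 > \reg M_3$. Exactness then forces the middle to vanish, proving (a). Inequalities (b) and (c) follow by the same mechanism applied to the three-term windows $\operatorname{Tor}_i^S(M_1,k) \to \operatorname{Tor}_i^S(M_2,k) \to \operatorname{Tor}_i^S(M_3,k)$ and $\operatorname{Tor}_i^S(M_2,k) \to \operatorname{Tor}_i^S(M_3,k) \to \operatorname{Tor}_{i-1}^S(M_1,k)$; for the latter one uses the reindexing $\operatorname{Tor}_{i-1}^S(M_1,k)_{i+j} = \operatorname{Tor}_{i-1}^S(M_1,k)_{(i-1)+(j+1)}$ to see that the correct hypothesis on the last term is $j > \reg M_1 - 1$.

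The equality clauses are extracted by a two-case dichotomy that combines the three inequalities already proved. For (a) under the hypothesis $\reg M_2 \neq \reg M_3$: if $\reg M_2 < \reg M_3$, then (c) forces $\reg M_3 \leq \reg M_1 - 1$, hence $\reg M_1 \geq \reg M_3 + 1$, upgrading the inequality in (a) to equality; if instead $\reg M_2 > \reg M_3$, then (b) forces $\reg M_2 \leq \reg M_1$, again upgrading (a) to equality. The equalities in (b) (under $\reg M_1 \neq \reg M_3 + 1$) and in (c) (under $\reg M_1 \neq \reg M_2$) are produced by the exactly parallel dichotomy on which of the two quantities inside the $\max$ is attained.

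The main obstacle, such as it is, is only keeping the degree shifts straight across the three windows and handling the edge case $i = 0$ in (c) (where $\operatorname{Tor}_{-1}$ is zero by convention, so the inequality is actually easier). No deeper homological input and no ring-theoretic or combinatorial data enters the argument, consistent with this being a general statement about short exact sequences of graded modules.
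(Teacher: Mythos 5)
Your proof is correct. There is, however, nothing in the paper to compare it against: the paper does not prove this lemma but simply quotes it as \cite[Corollary 20.19]{Eisenbud}. Your route --- reading regularity off the graded Betti numbers $\beta_{i,i+j}(M)=\dim_k\operatorname{Tor}_i^S(M,k)_{i+j}$ and chasing the degree-$(i+j)$ strand of the long exact sequence in $\operatorname{Tor}$ --- is the standard direct argument, and your degree bookkeeping is exactly right: the reindexings $\operatorname{Tor}_{i+1}(M_3,k)_{(i+1)+(j-1)}$ in (a) and $\operatorname{Tor}_{i-1}(M_1,k)_{(i-1)+(j+1)}$ in (c) are precisely what make the shifts $\reg M_3+1$ and $\reg M_1-1$ appear, and the $i=0$ edge case in (c) is handled correctly. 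The one place you are terser than you should be is the equality clauses for (b) and (c), which you dispatch with the phrase ``exactly parallel dichotomy.'' The claim is true, but since the entire content of the equality statements is this bootstrapping of the three inequalities against one another, at least one case deserves to be written out; for instance, for (b) under $\reg M_1 \neq \reg M_3+1$: if $\reg M_1 \geq \reg M_3+2$, then (a) gives $\reg M_1 \leq \max\{\reg M_2,\reg M_3+1\}$, which forces $\reg M_2 \geq \reg M_1$ and hence equality, while if $\reg M_1 \leq \reg M_3$, then (c) gives $\reg M_3 \leq \max\{\reg M_1-1,\reg M_2\}$, which forces $\reg M_2 \geq \reg M_3$ and hence equality. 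With that small expansion your argument is a complete, self-contained proof of a statement the paper leaves entirely to a citation.
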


Let $G$ be a finite simple graph on the vertex set $V(G)$ and edge set $E(G)$. For $U \subseteq V(G)$, an \emph{induced subgraph} on vertex set $U$ is denoted by $G[U]$, is for $i,j \in U$, $\{i,j\} \in E(G[U])$ if and only if $\{i,j\} \in E(G)$. A \emph{complete} graph on the vertex set $V(G) =[n]$ is denoted by $K_n$, is a graph with $\{i,j\} \in E(G)$ for all $i,j \in V(G)$. A graph $G$ is called a \emph{block graph} if every block of $G$ is a complete graph. A graph $G$ is said to be bipartite if there exists a bipartition $V(G) = V_1 \sqcup V_2$ such that no two vertices of $V_i$ are adjacent for each $i=1,2$. A \textit{star graph} on $[n+1]$ vertices is denoted by $K_{1,n}$, is the graph with one internal vertex and $n$ leaves. A \textit{cycle} on $[n]$ is denoted by $C_n$, is a graph with every vertex has degree $2$. A \textit{unicyclic graph} is a graph containing exactly one cycle as a subgraph.

\vspace{1mm}

\begin{notation} Let $G$ be a simple graph. For an edge $e'$ in $G$, $G\setminus e'$ is the graph on the vertex set $V(G)$ and edge set $E(G)\setminus {e'}$. An edge $e'$ is called a \emph{bridge} if $c(G) < c(G \setminus e')$, where $c(G)$ is the number of components of $G$. For a vertex $v$, 
$$N_{G}(v) = \{u \in V(G) \mid \{u,v\} \in E(G)\},$$ 
denotes the \emph{neighborhood} of $v$ in $G$. Let $e =\{i,j\} \notin E(G)$ be an edge in $G \cup \{e\}$. Then $G_e$ (cf. \cite[Definition 3.1]{MS}) is the graph on vertex set $V(G)$ and edge set $$E(G_e) = E(G) \cup \left\{ \{k,l\} : k,l \in N_{G}(i) \textnormal{ or } k,l \in N_{G}(j) \right\}.$$
\end{notation}

The following result provides equality of the binomial edge ideal and parity binomial edge ideal for a bipartite graph. 

\begin{remark}  \label{Rem1.phi} \cite[Corollary 6.2]{B2018}
Let $G$ be a bipartite graph with a partition $V(G)= V_1 \sqcup V_2$. Let $\phi : S \rightarrow S$ be a map defined by  
\begin{equation*}
    \phi(x_i) = \begin{cases}
                x_i \text{ if } i \in V_1 \\
                y_i \text{ if } i \in V_2
                \end{cases}
                \text{ and } 
    \phi(y_i) = \begin{cases}
                y_i \text{ if } i \in V_1 \\
                x_i \text{ if } i \in V_2.
                \end{cases}
\end{equation*}
Then $\phi$ is an isomorphism and $\phi(J_G) = \I_G$. We will reserve $\phi$ throughout the article.

\end{remark}

The following results describe the colon ideal operation on the binomial edge ideal due to Mohammadi and Sharifan and the parity binomial edge ideal due to Kumar. 

\begin{remark}\label{Rem1.MCI} 
Let $G$ be a simple graph.  
\begin{enumerate}[(a)]
 \item \cite[Theorem 3.4]{MS} Let $e=\{i,j\} \notin E(G)$ be a bridge in $G \cup \{e\}$. Then $J_G: f_e = J_{G_e}$.
\item \cite[Theorem 3.7]{MS} Let $e =\{i,j\} \notin E(G)$. Then 
\[
J_G : f_e = J_{G_e} + (g_{P,t} \mid \quad P:i,i_1,\ldots,i_s,j \text{ is a path between } i,j \text{ and } 0 \leq t \leq s),
\] 
where $g_{P,0}=x_{i_1}\ldots x_{i_s}$ and for each $1\leq t \leq s, g_{P,t}=y_{i_1}\ldots y_{i_t}x_{i_{t+1}}\ldots x_{i_s}$.
\item \cite[Lemma 3.3]{A2021LSS} Let $G$ be a non-bipartite graph. Assume that there exists an edge $e=\{u,v\}$ such that $G \setminus e$ is a bipartite graph. Then 
 \[\mathcal{I}_{G\setminus e}:g_e = \mathcal{I}_{G \setminus e} + ( f_{i,j} \mid \{i,j\} \in N_{G \setminus e}(u) \text{ or } \{i,j\} \in N_{G \setminus e}(v) ) = \phi(J_{(G\setminus e)_{e}}).\]
\end{enumerate}
\end{remark}

\begin{remark} \label{Rem1.pcolon}
    Let $G$ be a bipartite graph and $e=\{i,j\} \notin E(G)$ be a bridge in $G \cup \{e\}$  such that $V(G \cup e)= V_1 \sqcup V_2$. Then one has $\I_G: g_e = \I_{G} + ( f_{kl} \mid k,l \in N_{G}(i) \textnormal{ or } k,l \in N_{G}(j) )$.
\end{remark}
\begin{proof}
    The proof follows from Remarks \ref{Rem1.phi} and \ref{Rem1.MCI}(a).
\end{proof}

The following lemma follows from  \cite[Corollary 2.2]{HH}, \cite[Theorem 5.5]{TCT}  and \cite[Lemma 4.1]{JAR}.

\begin{lemma} \label{Lemma1.RD}
Let $G$ be  a simple graph and $g \in S$, then 
\begin{enumerate}[(a)]
    \item $J_G : g = J_G : g^n$ for any $n \geq 2$,
    \item $\I_G : g = \I_G : g^n$ for any $n \geq 2$, if char$(k) \neq 2$.
\end{enumerate}
\end{lemma}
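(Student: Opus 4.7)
The plan is to reduce both statements to the general principle that colon ideals stabilize immediately for radical ideals. Specifically, I will use the fact that if $I \subseteq S$ is a radical ideal and $g \in S$, then $I : g = I : g^{n}$ for every $n \geq 1$. The nontrivial inclusion here is $I : g^n \subseteq I : g$: if $x \in I : g^n$, then $g^n x \in I$, so $(gx)^n = g^n x^n = (g^n x)\, x^{n-1} \in I$, and radicality forces $gx \in I$, i.e.\ $x \in I : g$. The reverse inclusion $I : g \subseteq I : g^n$ is immediate from $g \mid g^n$.

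For part (a), I would invoke \cite[Corollary 2.2]{HH}, which states that the binomial edge ideal $J_G$ is a radical ideal for every simple graph $G$. Applying the general principle above with $I = J_G$ yields $J_G : g = J_G : g^n$ for all $n \geq 2$, which is exactly the claim. (The citation to \cite[Lemma 4.1]{JAR} can be used as a direct reference, since that result records precisely this colon stabilization for $J_G$.)

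For part (b), the same strategy works once we have radicality of $\mathcal{I}_G$. By \cite[Theorem 5.5]{TCT}, the parity binomial edge ideal $\mathcal{I}_G$ is radical whenever $\operatorname{char}(k) \neq 2$. (In characteristic $2$, one has $g_{ij} = -f_{ij}$ and the ideal can fail to be radical, which is exactly why the hypothesis on the characteristic is needed.) Applying the general principle with $I = \mathcal{I}_G$ then gives $\mathcal{I}_G : g = \mathcal{I}_G : g^n$ for all $n \geq 2$.

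There is no real obstacle: the entire substance of the lemma is absorbed into the two cited radicality theorems, and what remains is the one-line ring-theoretic observation above. The only point requiring any care is the characteristic hypothesis in (b), which must be tracked because radicality of $\mathcal{I}_G$ genuinely fails in characteristic $2$ for non-bipartite $G$; this is why (a) holds unconditionally while (b) does not.
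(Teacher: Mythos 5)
Your proposal is correct and follows essentially the same route as the paper: the paper simply cites \cite[Corollary 2.2]{HH} (radicality of $J_G$), \cite[Theorem 5.5]{TCT} (radicality of $\mathcal{I}_G$ when $\mathrm{char}(k)\neq 2$), and \cite[Lemma 4.1]{JAR} for the colon stabilization, which is exactly the combination you use. Your write-up merely makes explicit the one-line radical-ideal argument $(gx)^n=(g^nx)x^{n-1}\in I\Rightarrow gx\in I$ that those citations encapsulate.
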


\begin{remark}\label{Rem1.GUG}  \cite[Corollary 3.2]{JNR}
Let $G$ be a new graph obtained by gluing finitely many graphs at free vertices. Let $G=L_1 \cup\cdots\cup L_k$ be a graph satisfying the properties
\begin{enumerate}
  \item[(a)] For $i \neq j, $ if $L_i \cap L_j \neq  \emptyset $, then $L_i \cap L_j = \{v_{ij}\}$, for some vertex $v_{ij}$ which is free vertex in $L_i$ as well as $L_j$;
  \item[(b)] For distinct $i,j,k, L_i \cap L_j \cap L_k =  \emptyset. $
\end{enumerate}
Then $\reg S/J_G  = \sum_{i=1}^{k} \reg S/J_{L_{i}}$.
\end{remark}

\begin{remark}\label{rem1.Reg2}
\cite[Theorem 0.2]{AN} If $G\in \mathcal{T}_{m}$ (see, Notation \ref{defPK}), then $\reg S/J_{G}^s = 2s + i(G)-1$, for all $s \geq 1$ and $m \geq 2$.
\end{remark}

A \emph{flower graph} $F_{h,k}(v)$ (cf. \cite[Definition 3.1]{CR20}) is a connected block graph obtained by identifying $h$ copies of  $C_3$ and $k$ copies of $K_{1,3}$ with a common vertex $v$, where $v$ is one of the free vertices of $C_3$ and of $K_{1,3}$, and $cdeg(v) \geq 3$. $G$ is called flower-free if 
$G$ has no ﬂower graphs as induced subgraphs.

\begin{remark} \label{Rem.FF}
    \cite[Corollary 3.2]{CR20} Let $G$ be a connected block graph that does not have an isolated vertex. If $G$ is a flower-free graph, then $\reg S/J_G = i(G) + 1$.
\end{remark}

\begin{definition}\cite[Definition 1.1]{H1982} \label{Def-d-sequence} Let $R$ be a commutative ring. Set $a_0 = 0$. A sequence of elements $a_1,\ldots,a_m$ in $R$ is said to be a $d$-sequence if it satisfies the conditions: 
\begin{enumerate}[(a)]
    \item $a_1,\ldots,a_m$ is a minimal system of generators of the ideal $(a_1,\ldots,a_m)$;
    \item $( ( a_0,\ldots,a_i ) : a_{i+1}a_j) = (( a_0,\ldots,a_i ) : a_j)$ for all $0 \leq i \leq m-1$, and $j \geq i+1$.
\end{enumerate} 
\end{definition}

\begin{lemma} \label{Lemma1.PD}
\cite[Observation 2.4]{SZ}
Let $R$ be a commutative ring. Suppose that $a_1,\ldots,a_n$ form a $d$-sequence in $R$ and $I = ( a_1,\ldots,a_n )$ is the ideal in $R$. Set $a_0 = 0$. Then, one has 
\[(a_0,a_1,\ldots,a_{i-1})+I^{s}:a_i = (a_0,a_1,\ldots,a_{i-1}:a_i) + I^{s-1},\]
for $s\geq 1$ and $i = 1,\ldots,n$. 
\end{lemma}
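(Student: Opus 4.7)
The plan is to prove the two inclusions separately. The containment $(\supseteq)$ is immediate: if $r \in (a_0, a_1, \ldots, a_{i-1}) : a_i$ then $r a_i \in (a_0, a_1, \ldots, a_{i-1})$, and if $r \in I^{s-1}$ then $r a_i \in I^s$; in either case $r a_i$ lies in $(a_0, a_1, \ldots, a_{i-1}) + I^{s}$, so the right-hand side sits inside the left.

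For the reverse containment, I would take $r$ in the left-hand side and write $r a_i = b + c$ with $b \in (a_0, a_1, \ldots, a_{i-1})$ and $c \in I^s$. Note that $c = r a_i - b$ automatically lies in the ideal $(a_1, \ldots, a_i)$. The key step is to invoke the fundamental intersection property of $d$-sequences due to Huneke \cite{H1980}, namely
\[
I^s \cap (a_1, \ldots, a_j) = (a_1, \ldots, a_j) \cdot I^{s-1}
\]
for every $j \geq 1$ and every $s \geq 1$; this is precisely the property responsible for $d$-sequence ideals being of linear type. Applying it with $j = i$ yields a decomposition $c = c' + a_i t$ where $c' \in (a_1, \ldots, a_{i-1}) \cdot I^{s-1} \subseteq (a_0, a_1, \ldots, a_{i-1})$ and $t \in I^{s-1}$. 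Substituting gives $(r - t) a_i = b + c' \in (a_0, a_1, \ldots, a_{i-1})$, whence $r - t \in (a_0, a_1, \ldots, a_{i-1}) : a_i$, and therefore $r = (r - t) + t$ lies in $\bigl( (a_0, a_1, \ldots, a_{i-1}) : a_i \bigr) + I^{s-1}$, as required.

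The main obstacle is the decomposition of $c$; it is not a formal manipulation but genuinely relies on Huneke's intersection theorem. A naive induction on $s$ alone does not suffice, because reducing $I^s$ to $I^{s-1}$ via the inductive hypothesis reproduces a colon ideal of the same shape and leads to a circular argument. Once Huneke's result is available, however, the remainder of the argument is a short rearrangement, and the base case $s = 1$ is automatic since $a_i \in I$ forces both sides to equal $R$ (under the convention $I^0 = R$).
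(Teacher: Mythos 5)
Your argument is correct. The paper itself gives no proof of this lemma --- it is imported verbatim as a citation of Shen--Zhu's Observation 2.4 --- so the only meaningful comparison is with the standard derivation, and yours is exactly that: the containment $\supseteq$ is formal, and the containment $\subseteq$ reduces, after writing $ra_i = b + c$ and observing $c \in (a_1,\ldots,a_i) \cap I^s$, to Huneke's intersection theorem $(a_1,\ldots,a_j) \cap I^s = (a_1,\ldots,a_j)I^{s-1}$ for $d$-sequences, which is precisely the result underlying \cite[Observation 2.4]{SZ}. Your remark that a naive induction on $s$ is circular, and that the base case $s=1$ is trivial because both sides equal $R$, is also accurate. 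The one thing worth flagging is that you should cite the intersection theorem precisely (it is \cite[Theorem 2.1]{H1982}, valid over an arbitrary commutative ring), since the entire non-formal content of the lemma is concentrated there.
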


\section{\textit{d}-sequence (parity) edge-binomials} \label{sec.d-seq}

\vspace{2mm}

In this section, we characterize connected unicyclic graphs whose edge-binomials form a $d$-sequence. In addition, we show that parity edge-binomials of these connected unicyclic graphs form a $d$-sequence. First, we set up notations that we use thought out this article.

\begin{notation}
    Let $a_1,\ldots,a_{n}$ be a sequence of (parity) edge-binomials of $G$. We denote $J_{i}$ for an ideal generated by $( a_1,\ldots,a_i )$ and $H_i$ for the graph associated to (parity) edge-binomials $a_1,\ldots,a_i$. We denote an edge  $\{\alpha_{a_{i}},\beta_{a_{i}}\} \in E(G)$ for an associated (parity) edge-binomial $a_i$.
\end{notation}

The following describes combinatorial characterization for edge-binomials of a unicyclic graph to form a $d$-sequence.

\begin{lemma} \label{Lemma2.an}
Let $G$ be a unicyclic graph on $[n]$. Assume that $a_1,\ldots,a_{n}$ form a  $d$-sequence edge-binomials of $G$. Then an edge associated with $a_n$ forms a cycle in $G$, i.e., the graph $H_{n-1}$ is a tree.
\end{lemma}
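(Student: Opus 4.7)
The plan is to prove the contrapositive: if $H_{n-1}$ still contains a cycle, then $a_1,\ldots,a_n$ cannot be a $d$-sequence. Assume $a_n$ corresponds to an edge not on the unique cycle $C: v_1,v_2,\ldots,v_g,v_1$ of $G$. Then every cycle edge appears among $a_1,\ldots,a_{n-1}$; let $p\le n-1$ be the largest index such that $a_p$ is a cycle edge, and relabel the cycle so that $a_p=\{v_g,v_1\}$. The remaining cycle edges $\{v_1,v_2\},\ldots,\{v_{g-1},v_g\}$ all have index strictly less than $p$, so $H_{p-1}$ already contains the path $P:v_1-v_2-\cdots-v_g$.

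I would then test the $d$-sequence identity at the pair $(i,j)=(p-1,n)$, which is a valid instance since $p\le n-1$. For $(J_{p-1}:a_p)$, Remark \ref{Rem1.MCI}(b) applied to the edge $a_p=\{v_g,v_1\}$ and the path $P$ in $H_{p-1}$ shows that the colon contains the monomial $g_{P,0}=x_{v_2}x_{v_3}\cdots x_{v_{g-1}}$, a nonzero pure $x$-monomial (nontrivial because the girth $g\ge 3$). For $(J_{p-1}:a_n)$, note that $H_{p-1}\cup\{a_n\}$ is a subgraph of $G$ that omits $a_p$, hence contains no cycle of $G$ and is a forest; so $a_n$ is a bridge in $H_{p-1}\cup\{a_n\}$, and Remark \ref{Rem1.MCI}(a) gives $(J_{p-1}:a_n)=J_{(H_{p-1})_{a_n}}$, which is itself a binomial edge ideal.

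The contradiction is then immediate. From the trivial containment $(J_{p-1}:a_p)\subseteq (J_{p-1}:a_p a_n)$ we get $g_{P,0}\in (J_{p-1}:a_p a_n)$. On the other hand, no nonzero polynomial in $k[x_1,\ldots,x_n]$ can lie in a binomial edge ideal, because the substitution $y_i\mapsto 0$ sends every generator $x_iy_j-x_jy_i$ to zero; in particular $g_{P,0}\notin (J_{p-1}:a_n)$. Thus $(J_{p-1}:a_p a_n)\supsetneq (J_{p-1}:a_n)$, violating the $d$-sequence axiom. Hence $a_n$ must lie on the cycle, and $H_{n-1}=G\setminus\{a_n\}$ is a connected graph on $n$ vertices with $n-1$ edges, i.e., a spanning tree of $G$.

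The only step that needs care is choosing the witness: one must take $p$ to be the \emph{last} cycle index so that the cycle-path $P$ already sits inside $H_{p-1}$, which is what places $g_{P,0}$ in $(J_{p-1}:a_p)$. Once that is arranged, the rest is a clean application of the two colon formulas in Remark \ref{Rem1.MCI} together with the elementary fact that binomial edge ideals contain no nonzero pure $x$-polynomial.
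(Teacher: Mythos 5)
Your proof is correct and follows essentially the same route as the paper: your ``largest cycle-edge index $p$'' is exactly the paper's ``smallest $i$ such that $H_i$ contains a cycle,'' and both arguments contrast $J_{p-1}:a_p$ (which acquires the monomial generators of Remark~\ref{Rem1.MCI}(b)) with $J_{p-1}:a_j$ for a later bridge edge (which stays a binomial edge ideal by Remark~\ref{Rem1.MCI}(a)). Your write-up in fact supplies the details the paper leaves implicit, namely the explicit witness $g_{P,0}$ and the $y_i\mapsto 0$ substitution showing a binomial edge ideal contains no nonzero pure $x$-polynomial.
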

\begin{proof}
Let $i<n$ be the smallest integer such that $H_i$ has a cycle. Then from Remark \ref{Rem1.MCI} (a) and (b) it follows that $J_{i-1}:a_{i}a_{n} \neq J_{i-1}:a_{n}$, since the edge $e_{n}$ is a bridge in $H_{i-1} \cup e_n$, while  edge $e_i$ is not a bridge in $H_i$. This contradicts the hypothesis, thus $i=n$, as desired. 
\end{proof}

The authors in \cite{AN} proved the following lemma for trees. We prove the result for unicyclic graphs. 

\begin{lemma}\label{Lemma2.T}
Let $G$ be a unicyclic graph on $[n]$. Assume that $a_1,\ldots,a_{n}$ are  $d$-sequence edge-binomials of $G$, where $a_k$ corresponds to an edge $\{\alpha_{a_{k}},\beta_{a_{k}}\} \in E(G)$ for all $k$. 

If there exists a smallest integer $i$ such that $J_i:a_{i+1} \neq J_i$, then $\{\alpha_{a_{i+1}},\beta_{a_{i+1}}\} \cap \{\alpha_{a_{j}},\beta_{a_{j}}\} \neq \emptyset$, for all $j>i+1$. In particular, for all $j>i+1$, one has $\{\alpha_{a_{i+1}}\} \cap \{\alpha_{a_{j}},\beta_{a_{j}}\} \neq \emptyset$ or $\{\beta_{a_{i+1}}\} \cap \{\alpha_{a_{j}},\beta_{a_{j}}\} \neq \emptyset$.
\end{lemma}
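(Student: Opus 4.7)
I argue by contradiction. Assume there exists $j>i+1$ with $e_{i+1}\cap e_j=\emptyset$, and write $e_{i+1}=\{u,v\}$ and $e_j=\{u',v'\}$. Because $J_i:a_{i+1}\neq J_i$, Remark \ref{Rem1.MCI}(a) (applicable since $H_{i+1}$ is a forest by Lemma \ref{Lemma2.an} and $i+1\leq n-1$) identifies $J_i:a_{i+1}$ with $J_{(H_i)_{e_{i+1}}}$ and forces some endpoint of $e_{i+1}$, which after possibly swapping I denote $u$, to have two distinct neighbors $w_1,w_2$ in $H_i$. Then $\{w_1,w_2\}$ is an edge of $(H_i)_{e_{i+1}}$, so $f_{w_1w_2}\in J_i:a_{i+1}$, and the $d$-sequence identity $(J_i:a_{i+1}a_j)=(J_i:a_j)$ gives $J_i:a_{i+1}\subseteq J_i:a_j$; hence $f_{w_1w_2}f_{u'v'}\in J_i$. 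Moreover, since $i\leq n-2$, the graph $H_i$ is a forest.

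The strategy is then to exhibit a prime ideal containing $J_{H_i}$ but missing $f_{w_1w_2}f_{u'v'}$, using the primes $P_S(H_i):=(x_s,y_s:s\in S)+\sum_r J_{\tilde{C_r}}$ with $C_r$ the components of $H_i\setminus S$. If $u',v'$ lie in different components of $H_i\setminus u$, I take $S=\{u\}$: the distinct neighbors $w_1,w_2$ of $u$ in the forest $H_i$ lie in different components of $H_i\setminus u$, so $f_{w_1w_2}\notin P_{\{u\}}$; the case hypothesis gives $f_{u'v'}\notin P_{\{u\}}$; primality of $P_{\{u\}}$ then yields the contradiction.

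The remaining situation, in which $u',v'$ lie in the same component of $H_i\setminus u$, is the main obstacle. In this case, the unique $u'$-$v'$ path $P^*:u'=p_0,p_1,\ldots,p_s,p_{s+1}=v'$ in $H_i$, together with $e_j$, forms a cycle of $G$; by unicyclicity this must be $C$, and hence $j=n$ by Lemma \ref{Lemma2.an}. To rule out $\{w_1,w_2\}\cap\{p_1,\ldots,p_s\}\neq\emptyset$, I invoke the minimality of $i$: if $w_1=p_l$, then $w_1$ has the three distinct neighbors $p_{l-1},p_{l+1},u$ in $H_i$, and letting $m\leq i$ be the step at which the last of these three incident edges is added, the minimality of $i$ forces $J_{m-1}:a_m=J_{m-1}$; by Remark \ref{Rem1.MCI}(a), this requires both endpoints of $e_m$ to have at most one neighbor in $H_{m-1}$, contradicting that $w_1$ already has two. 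With $\{w_1,w_2\}$ thus disjoint from $\{p_1,\ldots,p_s\}$, I take $S=\{u,p_1\}$ (valid since $s\geq 1$, as $e_j\notin H_i$): $w_1,w_2$ remain in two distinct components of $H_i\setminus S$, and removing $p_1$ separates $u'$ from $v'$, so neither $f_{w_1w_2}$ nor $f_{u'v'}$ belongs to $P_{\{u,p_1\}}(H_i)$, again contradicting primality. The ``in particular'' clause is an immediate restatement of the conclusion.
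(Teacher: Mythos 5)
Your argument is correct, but it follows a genuinely different route from the paper's. The paper first reduces to the single index $j=n$ (the tree analogue from \cite{AN} together with Lemma \ref{Lemma2.an} handles $j<n$), and then, assuming $e_{i+1}\cap e_n=\emptyset$, uses the Mohammadi--Sharifan description of the colon ideal (Remark \ref{Rem1.MCI}) twice: once to produce $f_{kl}\in (J_i:a_{i+1})\setminus J_i$ with $k,l\in N_{H_i}(\alpha_{a_{i+1}})$, and once to argue that $f_{kl}\in J_i:a_n$ forces $k,l\in N_{H_i}(\alpha_{a_n})$ or $k,l\in N_{H_i}(\beta_{a_n})$, so that the four edges $\{\alpha_{a_{i+1}},k\},\{\alpha_{a_{i+1}},l\},\{\alpha_{a_n},k\},\{\alpha_{a_n},l\}$ form a $4$-cycle in $H_i$, contradicting Lemma \ref{Lemma2.an}. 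You instead treat all $j>i+1$ uniformly and never compute $J_i:a_j$: from $f_{w_1w_2}\in J_i:a_{i+1}\subseteq J_i:a_{i+1}a_j=J_i:a_j$ you extract the single membership $f_{w_1w_2}f_{u'v'}\in J_{H_i}$ and then refute it by exhibiting a prime $P_S(H_i)$ (with $S=\{u\}$ or $S=\{u,p_1\}$) containing $J_{H_i}$ but neither factor; the only delicate point, keeping $w_1,w_2$ off the interior of the $u'$--$v'$ path, is handled by a clean appeal to the minimality of $i$. Both arguments are sound. The paper's is shorter but leaves implicit the verification that a degree-two binomial lying in $J_{(H_i)_{e_n}}$ plus the path monomials must come from an edge of $(H_i)_{e_n}$; yours avoids Remark \ref{Rem1.MCI}(b) entirely and gives an explicit non-membership certificate, at the cost of importing the primes $P_S(G)$ and the criterion for $f_{ab}\in P_S(G)$ from Herzog et al.\ \cite{HH}, which you should cite explicitly since the paper does not record them among its preliminaries.
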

\begin{proof}
Assume that $i<n-1$. From Lemma  \cite[Lemma 2.1]{AN} and \ref{Lemma2.an}, it is enough to prove that $\{\alpha_{a_{i+1}},\beta_{a_{i+1}}\} \cap \{\alpha_{a_{n}},\beta_{a_{n}}\} \neq \emptyset$. Suppose that $\{\alpha_{a_{i+1}},\beta_{a_{i+1}}\} \cap \{\alpha_{a_{n}},\beta_{a_{n}}\} = \emptyset$ and $i$ be the smallest integer such that  $J_i:a_{i+1} \neq J_i$. Then from Remark \ref{Rem1.MCI} one has $f_{kl} \in J_i:a_{i+1}$ such that $f_{kl} \notin J_i$, where $k,l \in N_{H_{i}}(\alpha_{a_{i+1}})$ or $k,l \in N_{H_{i}}(\beta_{a_{i+1}})$. If $k,l \in N_{H_{i}}(\alpha_{a_{i+1}})$ then clearly $k,l \notin N_{H_{i}}(\beta_{a_{i+1}})$, otherwise $G$ has two cycles. So one can further assume that $k,l \in N_{H_{i}}(\alpha_{a_{i+1}})$. We have $J_i:a_{i+1}a_n= J_i:a_n$ by hypothesis. But, if $f_{kl} \in J_i:a_{n}$ then $k,l \in N_{H_{i}}(\alpha_{a_{n}})$ or $k,l \in N_{H_{i}}(\beta_{a_{n}})$, by Remark \ref{Rem1.MCI}. In either case, $H_i$ has a cycle. For instance, if $k,l \in N_{H_{i}}(\alpha_{a_{n}})$ then edges $\{\{\alpha_{a_{i+1}},k\},\{\alpha_{a_{i+1}},l\},\{\alpha_{a_{n}},k\},\{\alpha_{a_{n}},l\}\} \in E(H_i)$ form a cycle. This is a contradiction by Lemma \ref{Lemma2.an}. Thus $\{\alpha_{a_{i+1}},\beta_{a_{i+1}}\} \cap \{\alpha_{a_{n}},\beta_{a_{n}}\} \neq \emptyset$ as desired. 
\end{proof}

\begin{lemma} \label{lemma2.CUG}
    Let $G = G_1 \sqcup \cdots \sqcup G_n$ be a unicyclic graph. Then edge-binomials of $G$ form a $d$-sequence if and only if edge-binomials of $G_i$ form a $d$-sequence and for $j \neq i$, $J_{G_j}$ is a complete intersection.
\end{lemma}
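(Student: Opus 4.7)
The plan is to exploit that $S = S_{G_1} \otimes_k \cdots \otimes_k S_{G_n}$ is a tensor product over disjoint variable sets, so that for any ideal $I \subseteq S_{G_j}$ and any $b \in S_{G_{j'}}$ with $j \neq j'$, multiplication by $b$ is a nonzerodivisor on $S/IS \cong (S_{G_j}/I) \otimes_k S_{\text{rest}}$ and therefore $IS:b = IS$; more generally, colons by elements of $S_{G_j}$ commute with extension from $S_{G_j}$ to $S$. I will also use that a homogeneous ideal is a complete intersection if and only if it is generated by a regular sequence.

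For the forward direction, suppose $a_1, \ldots, a_m$ is a $d$-sequence of edge-binomials of $G$. By Lemma~\ref{Lemma2.an}, $a_m$ lies in the unique cyclic component, which I call $G_i$. For each $j$, let $b^{(j)}_1, \ldots, b^{(j)}_{m_j}$ denote the subsequence of the $a$'s lying in $S_{G_j}$ in their inherited order, set $J^{(j)}_r := (b^{(j)}_1, \ldots, b^{(j)}_r) \subseteq S_{G_j}$, and write $r_j(k)$ for the number of $G_j$-edges among $a_1, \ldots, a_k$, so that the prefix ideal decomposes as $I_k = \sum_{j} J^{(j)}_{r_j(k)} S$. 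The disjoint-variable reduction of colons then shows that the $d$-sequence identity $(I_k : a_{k+1} a_l) = (I_k : a_l)$ for $l \geq k+1$ takes two forms: when $a_{k+1}, a_l$ both lie in $G_j$ it reads $(J^{(j)}_{r_j(k)} : a_{k+1} a_l) = (J^{(j)}_{r_j(k)} : a_l)$ in $S_{G_j}$, and when $a_{k+1} \in S_{G_j}$ and $a_l \in S_{G_{j'}}$ with $j \neq j'$ it reads $(J^{(j)}_{r_j(k)} : a_{k+1}) = J^{(j)}_{r_j(k)}$. Since $a_m \in G_i$, for every tree component $G_j$ ($j \neq i$) and every step at which some $b^{(j)}$ appears one may take $a_l = a_m$, and the second form forces $(J^{(j)}_r : b^{(j)}_{r+1}) = J^{(j)}_r$ for all $r$; hence $b^{(j)}_1, \ldots, b^{(j)}_{m_j}$ is a regular sequence in $S_{G_j}$, so $J_{G_j}$ is a complete intersection. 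Applying the first form with $j = j' = i$ then yields directly that $b^{(i)}_1, \ldots, b^{(i)}_{m_i}$ is a $d$-sequence in $S_{G_i}$.

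For the backward direction, given a $d$-sequence ordering $b_1, \ldots, b_{m_i}$ of $G_i$'s edge-binomials together with regular-sequence orderings of each tree $G_j$'s edge-binomials, I would list all tree-component edge-binomials first (in any concatenated order) and then append $b_1, \ldots, b_{m_i}$. In the tree phase the disjoint-variable argument gives $(I_k : a_{k+1}) = I_k$ at each step, so every $d$-sequence identity is automatic. In the cycle phase, writing $I_k = P + J^{(i)}_r$ where $P$ is the full tree-ideal and $J^{(i)}_r = (b_1, \ldots, b_r)$, the same disjoint-variable decomposition reduces the required identity in $S$ to $(J^{(i)}_r : b_{r+1} b_s) = (J^{(i)}_r : b_s)$ in $S_{G_i}$, which holds by hypothesis. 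Minimality of the generating sets at each prefix is automatic since distinct edge-binomials are $k$-linearly independent degree-two binomials.

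The hardest part will be the bookkeeping in the forward direction: subsequences of $d$-sequences are not $d$-sequences in general, so the proof must genuinely use the disjoint-variable tensor decomposition together with the forced location of $a_m$ in $G_i$ from Lemma~\ref{Lemma2.an}. It is precisely this forced location that upgrades each tree component's condition from ``$d$-sequence'' to the strictly stronger ``complete intersection'', thereby ruling out the non-path $\mathcal{T}_m$ trees with $m \geq 3$ among the non-cyclic components.
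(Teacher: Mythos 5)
Your proposal is correct, and while the overall architecture coincides with the paper's (the cycle component carries the $d$-sequence, the tree components must be complete intersections, and for sufficiency one lists the tree edge-binomials first and appends the $d$-sequence of the cyclic component), the forward implication is argued by a genuinely different route. The paper proves it by contrapositive: it invokes the Ene--Herzog--Hibi characterization to say a non-CI component has a vertex of degree $3$ or is a cycle, reduces to two-component configurations $G_j\sqcup G_k$, and rules them out with the shared-vertex condition of Lemma \ref{Lemma2.T}. You instead argue directly, decomposing each colon ideal across the disjoint variable sets ($S$ being a tensor product of the $S_{G_j}$ over $k$) so that the $d$-sequence identities split into a ``same-component'' form and a ``cross-component'' form; combined with Lemma \ref{Lemma2.an} placing the last binomial in the cyclic component, the cross-component form with $a_l=a_m$ forces each tree component's subsequence to be a regular sequence, and the same-component form shows the cyclic component's subsequence is a $d$-sequence. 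Your route is more self-contained and in fact supplies details the paper leaves implicit: it justifies why failure can be detected on a union of two components (i.e., that the restriction of a $d$-sequence to a sub-collection of components inherits the relevant colon identities), it covers the disjunct in which all trees are CI but the cyclic component fails to admit a $d$-sequence ordering, and it verifies the colon computations in the sufficiency direction that the paper merely asserts. The paper's route is shorter given that Lemma \ref{Lemma2.T} is already available, and it yields the combinatorial obstruction (two ``branching'' components cannot coexist) more transparently. One minor point to flag in your write-up: Lemmas \ref{Lemma2.an} and \ref{Lemma2.T} are stated for unicyclic graphs on $[n]$ with $n$ edge-binomials (implicitly connected), so you should remark that their proofs apply verbatim to disconnected unicyclic graphs before using them here --- a caveat that applies equally to the paper's own proof.
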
 
\begin{proof}
Assume that edge-binomials of $G_i$ form a $d$-sequence and for $j \neq i$, $J_{G_j}$ is a complete intersection. Note that if $J_{G}$ is a complete intersection, then edge-binomials of $G$ form a regular sequence. Thus edge-binomials associated with $G_j$'s for all $j \neq i$ form a condition independent $d$-sequence (independent of order). We take a sequence of edge-binomials of $G_j$ for all $j \neq i$ (independent of order), then edge-binomials of $G_i$ in the same as it forms a $d$-sequence. Then the above sequence forms a  $d$-sequence in $S$. Thus, $J_G$ is generated by a $d$-sequence.
    
It follows from \cite[Corollary 1.2]{EHH}  that if $G$ is not a complete intersection, then $G$ has a vertex of degree $3$ or all the vertices of $G$ have degree $2$.    Conversely, Assume that none of the edge-binomials of $G_i$ form a $d$-sequence or there exists $j$ and $k$ such that $J_{G_j}$ and $J_{G_k}$ are not complete intersection  for some $j \neq k$. It is enough to prove that edge-binomials of $J_{G_j \sqcup G_k}$ do not form a $d$-sequence. There are two cases for $G_j \sqcup G_k$
\begin{enumerate}[(i)] 
    \item  both $G_j$ and $G_j$ has at least one vertex of degree $3$,
    \item $G_j$ has a vertex of degree $3$, and all the vertices of $G_k$ have degree $2$.
\end{enumerate}
From Lemma \ref{Lemma2.T}, it follows that any sequence of edge-binomials of graph (i) or graph (ii) do not form a $d$-sequence.
\end{proof}

Recall that $\mathcal{T}_m$ denotes the class of trees on $[n]$ vertices having degree sequence $(m,2,\dots, 1)$ or $(m,1,\dots, 1)$, where $m \geq 2$, \cite{AN}. The following version of $\mathcal{T}_m$ will be useful in proving results. 

\begin{notation} \label{defPK} 
$\mathcal{T}_m$ denote the class of graphs with vertex set and edge set as below:
$$V(G) = \{k_0,p_{1,1}, \ldots,p_{1,{s(1)+1}}, p_{2,1}, \ldots,p_{2,{s(2)+1}}, \ldots, p_{m,1}, \ldots,p_{m,{s(m)+1}}\},$$
with $s(i) \geq 0$ for all $1 \leq i \leq m$, and edge set  
$$E(G) = \{\{k_0,p_{i,{1}} \mid i = 1,\ldots,m\} \cup \bigcup_{i=1}^{m} \{p_{i,{j}},p_{i,{j+1}} \mid j=1,\ldots,s(i)\}\}.$$
\end{notation} 

\begin{remark}\label{rem2.PtK1m}\cite[Theorem 2.1]{AN}
If $G \in \mathcal{T}_{m}$ then edge-binomials of $G$ form a $d$-sequence. 
\end{remark}

In the following theorem, we construct unicyclic graphs whose edge-binomials form a $d$-sequence. 

\begin{Theorem} \label{Thm2.U}
Let $H \in \mathcal{T}_{m}$ be a tree. Consider a unicyclic graph $G$ constructed by adding
\begin{enumerate}[(a)]
    \item an edge between a pendant vertex of $H$ and the center of $H$, or
    \item an edge between two pendant vertices of $H$, or
    \item an edge between the center of $H$ and an internal vertex of $H$.
\end{enumerate}
Then edge-binomials of $G$ form a $d$-sequence. In particular, $J_G$ is of linear type.
\end{Theorem}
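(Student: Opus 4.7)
My plan is to extend a $d$-sequence ordering of the edge-binomials of $H$ by appending the cycle-closing edge last. Let $e$ denote the edge added in each of cases (a)--(c), so $G = H \cup \{e\}$. By Lemma \ref{Lemma2.an}, in any $d$-sequence ordering of edge-binomials of a unicyclic graph the last element must correspond to the cycle-closing edge, so we are forced to take $a_n = f_e$. Using the $d$-sequence ordering $a_1, \dots, a_{n-1}$ for the edge-binomials of $H \in \mathcal{T}_m$ supplied by Remark \ref{rem2.PtK1m}, and noting that $f_e \notin J_H = J_{n-1}$, we obtain a minimal generating set of $J_G$ of size $|E(G)|$.

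The $d$-sequence conditions $(J_{i-1} : a_i a_j) = (J_{i-1} : a_j)$ for $j \leq n-1$ are inherited from Remark \ref{rem2.PtK1m} applied to $H$, while the condition for $(i, j) = (n-1, n)$, namely $(J_{n-1} : a_n^2) = (J_{n-1} : a_n)$, is immediate from Lemma \ref{Lemma1.RD}(a). The substantive task is to verify
\[
(J_{i-1} : a_i \cdot f_e) = (J_{i-1} : f_e) \qquad \text{for each } 1 \leq i \leq n-1,
\]
with the $\supseteq$ direction trivial. The strategy is to compute both sides via Remark \ref{Rem1.MCI}(b): $J_{i-1} : f_e$ equals $J_{(H_{i-1})_e}$ plus the monomial ideal generated by the elements $g_{P, t}$ associated to paths $P$ in $H_{i-1}$ joining the endpoints of $e$. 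Working branch by branch, at step $i$ the path $P$ is either incomplete in $H_{i-1}$, so no $g_{P, t}$ appears in the colon, or complete, in which case $a_i$ shares a vertex with a portion of $P$ and one verifies that $a_i \cdot g_{P, t} \in J_{i-1}$ already.

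The main obstacle is the bookkeeping of the monomial generators $g_{P, t}$, which are purely monomial and interact non-trivially with the binomial generators of $J_{i-1}$. Cases (a) and (b) are parallel: the cycle uses one or two branch-paths of $H$, and by ordering the involved branch(es) last we control the step at which $g_{P, t}$ enters the colon ideal. Case (c) is the most delicate, since the cycle uses a center-incident chord meeting an internal vertex; here one must additionally choose the intra-branch ordering so that whenever a new $g_{P, t}$ appears it already lies in $J_{i-1}$ via the minors $f_{kl}$ generated by the neighborhoods at the center and at the internal endpoint. Once the three cases are handled, the ``in particular'' conclusion that $J_G$ is of linear type follows from Huneke's theorem \cite{H1980} that ideals generated by $d$-sequences are of linear type.
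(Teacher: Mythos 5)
Your proposal follows essentially the same route as the paper: order the edge-binomials of $H$ as in the $d$-sequence of Remark \ref{rem2.PtK1m}, append $f_e$ last, observe that only the conditions involving $f_e$ are new, dispatch the $(n-1,n)$ condition via Lemma \ref{Lemma1.RD}, and verify the remaining colons through Remark \ref{Rem1.MCI} — which is exactly how the paper argues (the paper is, if anything, terser at the $g_{P,t}$ bookkeeping step, citing only Remark \ref{Rem1.MCI} and Lemma \ref{Lemma2.T}). The one structural difference is that the paper does not treat case (b) directly: it re-roots $G$ as a graph of type (a) over some $H'\in\mathcal{T}_{m-1}$ when $m\geq 3$ and falls back on \cite[Theorem 4.4]{JAR} for the cycle $C_n$ when $m=2$, whereas you handle (b) in parallel with (a).
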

\begin{proof}
(a). Suppose $G$ is obtained by adding an edge between a pendant vertex of $H$ and the center of $H$. Let $e_1=\{k_{0},p_{k,{s(k)+1}}\}$, for some $k$ with $s(k) \geq 1$, be an edge of $G$. Set $d_0 =  0 \in S$. Consider the following sequence of edge-binomials $d_1,\ldots,d_n$ of $G$, where the first $n-1$ elements are edge-binomials of $H$ with same order as in \cite[Theorem 2.1]{AN} and $d_n= f_{e_1}$. From Remark \ref{rem2.PtK1m}, it follows that the binomial edge ideal of  $H$ is generated by $d$-sequence. Thus, it is enough to prove that $(d_0,d_1,\ldots,d_{i}):d_{i+1}d_{n} = (d_0,d_1,\ldots,d_{i}):d_{n}$ for all $i \leq n-1$. For $i=n-1$, the equality follows from Lemma \ref{Lemma1.RD}. For $i \leq n-2$, the equality follows from Remark \ref{Rem1.MCI} and Lemma \ref{Lemma2.T}. 

(b). Suppose $G$ is obtained by adding an edge between two pendant vertices of $H \in \mathcal{T}_{m}$ with $m \geq 3$. Then, $G$ can be viewed as a graph obtained by adding an edge between a pendant vertex of $H'$ and the center of $H'$, where $H' \in \mathcal{T}_{m-1}$. If $m=2$, then G is a cycle. From \cite[Theorem 4.4]{JAR} it follows that edge-binomials of $C_n$ form a $d$-sequence.  Thus $J_G$ is generated by  $d$-sequence.

(c). Suppose $G$ is obtained by an edge between the center of $H$ and an internal vertex of $H$. Let $e_2 = \{u,v\}$ be an edge of $G$ such that $\deg_{G}(u) = m+1$ and $\deg_{G}(v) =3$. Clearly, $G \setminus \{e\} = H$. From Remark \ref{rem2.PtK1m} it follows that edge-binomials of $H$ form a $d$-sequence. Consider the following sequence of edge-binomials $d_1,\ldots,d_n$ of $G$, where the first $n-1$ elements are edge-binomials of $H$ with the same order as in \cite[Theorem 2.1]{AN} and $d_n= f_e$. The $d$-sequence conditions hold in a similar way as (a). 
\end{proof}

\begin{remark} \label{rem3.iG}
Let $H \in \mathcal{T}_{m}$ be a tree. 
\begin{enumerate}[(a)]
    \item If $G$ is a unicyclic graph as mentioned in Theorem \ref{Thm2.U}(a), then one has   
$$i(G)= \begin{cases}
    \sum_{i=1}^{m}{s_{(i)}}  & \text{if girth}(G) = 3\\
    2+\sum_{i=1}^{m}{s_{(i)}} & \text{if girth}(G) \geq 4.
\end{cases}
$$
\item If $G$ is a unicyclic graph as mentioned in Theorem \ref{Thm2.U}(c), then one has  
$$i(G)= \begin{cases}
    \sum_{i=1}^{m}{s_{(i)}}  & \text{if girth}(G) = 3\\
    1+\sum_{i=1}^{m}{s_{(i)}} & \text{if girth}(G) \geq 4.
\end{cases}
$$
\end{enumerate}
\end{remark}

Below we illustrate two types of graphs obtained from $G \in \mathcal{T}_2$ whose (parity) edge-binomials form a $d$-sequence. Let $k_0$ be the center of $G$. $G_1$ is a graph obtained by adding an edge $e_1$ between a pendant vertex of $G$ and the center of $G$. $G_2$ is a graph obtained by adding an edge $e_2$ between an internal vertex of $G$ and the center of $G$. 
\begin{figure}[ht] \label{fig1}
    \centering
\begin{tikzpicture}[x=0.75pt,y=0.75pt,yscale=-1,xscale=1]
\draw    (280.42,210.25) -- (243,210.42) ;
\draw  (318.42,236.75) -- (280.42,210.25) ;
\draw    (279.42,269.25) -- (241.42,269.25) ;
\draw   (279.42,269.25) -- (318.42,236.75) ;
\draw  (424.42,209.25) -- (387,209.42) ;
\draw  (462.42,235.75) -- (424.42,209.25) ;
\draw (423.42,268.25) -- (385.42,268.25) ;
\draw   (423.42,268.25) -- (462.42,235.75) ;
\draw   (574.42,210.25) -- (537,210.42) ;
\draw    (612.42,236.75) -- (574.42,210.25) ;
\draw    (573.42,269.25) -- (535.42,269.25) ;
\draw    (573.42,269.25) -- (612.42,236.75) ;
\draw[draw=red]    (573.42,269.25) -- (574.42,210.25) ;
\draw[draw=red]    (385.42,268.25) -- (424.42,209.25) ;

\draw (277.42,196.5) node [anchor=north west][inner sep=0.75pt]  [font=\scriptsize]  {\textcolor{blue}{$k_{0}$}};
\draw (317.42,225.5) node [anchor=north west][inner sep=0.75pt]  [font=\scriptsize]  {$p_{1,{1}}$};
\draw (279.42,269.25) node [anchor=north west][inner sep=0.75pt]  [font=\scriptsize]  {$p_{1,{2}}$};
\draw (223.42,258) node [anchor=north west][inner sep=0.75pt]  [font=\scriptsize]  {$p_{1,{3}}$};
\draw (226.42,196.5) node [anchor=north west][inner sep=0.75pt]  [font=\scriptsize]  {$p_{2,{1}}$};
\draw (421.42,195.5) node [anchor=north west][inner sep=0.75pt]  [font=\scriptsize]  {\textcolor{blue}{$k_{0}$}};
\draw (461.42,224.5) node [anchor=north west][inner sep=0.75pt]  [font=\scriptsize]  {$p_{1,{1}}$};
\draw (423.42,268.25) node [anchor=north west][inner sep=0.75pt]  [font=\scriptsize]  {$p_{1,{2}}$};
\draw (367.42,258) node [anchor=north west][inner sep=0.75pt]  [font=\scriptsize]  {\textcolor{blue}{$p_{1,{3}}$}};
\draw (370.42,196.5) node [anchor=north west][inner sep=0.75pt]  [font=\scriptsize]  {$p_{2,{1}}$};
\draw (571.42,196.5) node [anchor=north west][inner sep=0.75pt]  [font=\scriptsize]  {\textcolor{blue}{$k_{0}$}};
\draw (611.42,225.5) node [anchor=north west][inner sep=0.75pt]  [font=\scriptsize]  {$p_{1,{1}}$};
\draw (573.42,269.25) node [anchor=north west][inner sep=0.75pt]  [font=\scriptsize]  {\textcolor{blue}{$p_{1,{2}}$}};
\draw (517.42,258) node [anchor=north west][inner sep=0.75pt]  [font=\scriptsize]  {$p_{1,{3}}$};
\draw (520.42,196.5) node [anchor=north west][inner sep=0.75pt]  [font=\scriptsize]  {$p_{2,{1}}$};
 
\draw (560,290) node [anchor=north west][inner sep=0.75pt]  [font=\normalsize]  {$G_2$};

\draw (560,235) node [anchor=north west][inner sep=0.75pt]  [font=\scriptsize]  {\textcolor{blue}{$e_2$}};

\draw (410,290) node [anchor=north west][inner sep=0.75pt]  [font=\normalsize]  {$G_1$};

\draw (390,235) node [anchor=north west][inner sep=0.75pt]  [font=\scriptsize]  {\textcolor{blue}{$e_1$}};

\draw (270,290) node [anchor=north west][inner sep=0.75pt]  [font=\normalsize]  {$G$};

\filldraw[black] (280.42,210.25) circle (1.5pt) ;
\filldraw[black] (243,210.42) circle (1.5pt) ;
\filldraw[black] (318.42,236.75) circle (1.5pt) ;
\filldraw[black] ( 279.42,269.25) circle (1.5pt) ;
\filldraw[black] ( 241.42,269.25)  circle (1.5pt) ;
\filldraw[black] (424.42,209.25) circle (1.5pt) ;
\filldraw[black] ( 387,209.42) circle (1.5pt) ;
\filldraw[black] (462.42,235.75)  circle (1.5pt) ;
\filldraw[black] (423.42,268.25) circle (1.5pt) ;
\filldraw[black] (385.42,268.25) circle (1.5pt) ;
\filldraw[black] (574.42,210.25   ) circle (1.5pt) ;
\filldraw[black] (537,210.42) circle (1.5pt) ;
\filldraw[black] (612.42,236.75) circle (1.5pt) ;
\filldraw[black] (573.42,269.25) circle (1.5pt);
\filldraw[black] (535.42,269.25) circle (1.5pt);
\end{tikzpicture}
\caption{The graph $G$ with degree sequence $(2,2,2,1,1)$, $G_1=G\cup \{\textcolor{blue}{e_1}\}$, and $G_2=G\cup \{\textcolor{blue}{e_2}\}$.}
\end{figure}

\begin{remark}
The authors in \cite[Theorem 4.4.]{JAR} characterized unicyclic graphs whose binomial edge ideals are almost complete intersections. Also, the same authors in \cite{JAR20} classified almost complete intersection unicyclic graphs into two types of graphs called $G_{1}$-type and $G_{2}$-type. One can see that the $G_{1}$-type graphs are a subgraph of the collection of graphs $G$ as in Theorem \ref{Thm2.U}(a), and the $G_{2}$-type graphs are a subgraph of the collection of graphs $G$ as in Theorem \ref{Thm2.U}(c).
\end{remark}

\begin{notation}
 \label{Def:Hm} $\mathcal{H}_m$ denote the class of graphs with vertex set and edge set as below:
$$V(G) = \{k_0,k_1,p_{1,1}, \ldots,p_{1,{s(1)+1}}, p_{2,1}, \ldots,p_{2,{s(2)+1}}, \ldots, p_{{m+1},1}, \ldots,p_{{m+1},{s(m+1)+1}}\}$$
with $s(i) \geq 0$ for all $1 \leq i \leq m+1$, and edge set 
\begin{equation*}
    \begin{split}
        E(G) = &\{\{k_0,p_{i,{1}} \mid i = 1,\ldots,m-1\} \cup \{k_0,k_1\} \cup \{k_1,p_{i,{1}} \mid i = m,m+1\} \\
        &\bigcup_{i=1}^{m+1} \{p_{i,{j}},p_{i,{j+1}} \mid j=1,\ldots,s(i)\}\}.
    \end{split}
\end{equation*}
\end{notation}

\begin{lemma} \label{Pro:2}
Let $G$ be a connected unicyclic graph on $[n]$ and $H \in \mathcal{T}_{m}$ be a tree. If edge-binomials of $G$ form a $d$-sequence, then a unicyclic graph is one of the following forms:  
\begin{enumerate}[(a)]
    \item $G$ is obtained by adding an edge between a pendant vertex of $H$ and the center of $H$,
    \item $G$ is obtained by adding an edge between the center of $H$ and an internal vertex of $H$,
    \item $G=C_n$, where $ n \geq 3$.
\end{enumerate}
\end{lemma}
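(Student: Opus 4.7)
The plan is first to reduce the statement to a question about the spanning tree of $G$, and then to pinpoint the location of the cycle-closing chord. Since an initial segment of a $d$-sequence is again a $d$-sequence (directly from Definition \ref{Def-d-sequence}), the elements $a_1, \ldots, a_{n-1}$ form a $d$-sequence of edge-binomials. By Lemma \ref{Lemma2.an}, the edge associated with $a_n$ closes the unique cycle of $G$, so $H_{n-1}$ is a spanning tree of $G$. The characterization of trees with $d$-sequence edge-binomials from \cite{AN} (the \emph{only if} direction alluded to in Remark \ref{rem2.PtK1m}) then forces $H_{n-1} \in \mathcal{T}_m$ for some $m \geq 2$, with center $k_0$ and branches $P_1, \ldots, P_m$ as set up in Notation \ref{defPK}.

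Next I would classify the chord $e_n = \{u,v\}$, where $u, v$ are non-adjacent in $H_{n-1}$. If exactly one of $u, v$ is $k_0$, say $u = k_0$ and $v = p_{i_j}$ with $j \geq 2$, then $j = s(i)+1$ (so $v$ is a pendant of $H_{n-1}$) puts $G$ in form (a), while $2 \leq j \leq s(i)$ (so $v$ is internal) puts $G$ in form (b). If both $u$ and $v$ are pendants of $H_{n-1}$ lying on distinct branches, then the relabeling used in the proof of Theorem \ref{Thm2.U}(b) exhibits $G$ as form (a) with respect to a smaller spider $H' \in \mathcal{T}_{m-1}$, with the degenerate case $m = 2$ giving $G = C_n$ and hence form (c). It remains to rule out the placements in which neither endpoint equals $k_0$ and at least one of $u, v$ is an internal non-pendant vertex of $H_{n-1}$.

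To exclude those placements, I would apply Lemma \ref{Lemma2.T}: let $i$ be the smallest index with $J_i : a_{i+1} \neq J_i$. Then one endpoint of $e_{i+1}$ is shared with each of $e_{i+2}, \ldots, e_n$; in particular $e_{i+1}$ is incident to $e_n$. Since the edge-binomials within any single branch of a spider contribute only regular-sequence colons and no nontrivial step, the edge $e_{i+1}$ triggering the first nontrivial colon must itself be incident to $k_0$. Tracking the generators in $J_i : a_{i+1}$ via Remark \ref{Rem1.MCI}(b), the common endpoint covering every later edge is forced to be $k_0$, for otherwise a generator of the form $f_{kl}$ with $\{k,l\} \subseteq N_{H_i}(\alpha_{a_{i+1}})$ would combine with a subsequent path-edge to produce a second cycle in $H_{n-1}$, contradicting Lemma \ref{Lemma2.an}. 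Consequently $e_n$ itself must be incident to $k_0$, which contradicts the excluded-case assumption and finishes the classification.

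The main obstacle will be making the last paragraph fully rigorous: one must pin down the canonical $d$-sequence ordering on $\mathcal{T}_m$ from \cite{AN}, identify exactly which edge $e_{i+1}$ introduces the first nontrivial colon, and argue that the vertex of $e_{i+1}$ covering all subsequent edges is necessarily the center $k_0$ rather than one of its neighbors. A direct alternative is to inspect the explicit presentation of $J_{H_i} : f_{e}$ coming from Remark \ref{Rem1.MCI} in each of the forbidden chord configurations and to exhibit an element of $J_i : a_{i+1} a_n$ that fails to lie in $J_i : a_n$, thereby violating Definition \ref{Def-d-sequence} outright.
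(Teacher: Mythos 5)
Your overall strategy is the same as the paper's (delete the last edge of the $d$-sequence via Lemma \ref{Lemma2.an} to get a spanning tree, invoke the tree classification from \cite{AN}, then case on where the chord sits), but there is a genuine gap in the very first reduction: you assert that the classification of $d$-sequence trees forces $H_{n-1}\in\mathcal{T}_m$, whereas the result actually cited in the paper's proof (\cite[Theorem 0.1]{AN}) gives $H_{n-1}\in\{\mathcal{T}_m,\mathcal{H}_m\}$ --- two classes, not one. The class $\mathcal{H}_m$ cannot be discarded, because it genuinely occurs. For example, let $G$ be of form (b), a spider with center $k_0$ together with the chord $\{k_0,p_{1_j}\}$ for some $j\geq 3$. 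Lemma \ref{Lemma2.an} only forces the edge $e_n$ associated with $a_n$ to lie on the cycle, not to be the chord; if the given $d$-sequence ordering ends at, say, $e_n=\{p_{1_1},p_{1_2}\}$, then the spanning tree $H_{n-1}$ has two vertices of degree at least $3$ (both $k_0$ and $p_{1_j}$) and hence is not a spider in $\mathcal{T}_m$. Your case analysis of the chord relative to the center, pendants, and internal vertices of a spider never meets these orderings, so the proof is incomplete precisely where the paper's Cases 4 and 5 (chord between two pendants of a graph in $\mathcal{H}_m$, or between a pendant and the center of a graph in $\mathcal{H}_m$) do the work of recovering form (b).

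A secondary issue, which you yourself flag, is that the exclusion argument in your last paragraph is not yet a proof. Its role corresponds to the paper's Case 6 (Lemmas \ref{Lemma2.T} and \ref{Lemma2.an} rule out any $G$ with a vertex of degree at least $3$ off the cycle), and the key unproved claim is that the edge $e_{i+1}$ producing the first nontrivial colon must be incident to $k_0$. The cleaner route, consistent with the paper, is to note that once $H_{n-1}$ is known to lie in $\mathcal{T}_m\cup\mathcal{H}_m$ the structure of $G$ is already tightly constrained, and the only configurations left to exclude are chord placements that leave a branch vertex off the resulting cycle; for those, Lemma \ref{Lemma2.T} applied to the edge $a_n$ (whose endpoints must meet every $e_j$ with $j>i+1$) yields the contradiction directly, or alternatively one exhibits, as you suggest at the end, an explicit element of $J_i:a_{i+1}a_n$ not in $J_i:a_n$ via Remark \ref{Rem1.MCI}(b).
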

\begin{proof}
Let $d_1,\ldots , d_n$ be a sequence of edge-binomials of $G$ such that $d_1,\ldots , d_n$ form a $d$-sequence. Let $e$ be an edge associate to the edge-binomial $d_n$.   From Lemma \ref{Lemma2.an} it follows that $G\setminus\{e\}$ is a tree. From \cite[Theorem 0.1]{AN} it follows that   $G\setminus\{e\} \in \{\mathcal{T}_m, \mathcal{H}_m\}$. Clearly $d_1,\ldots , d_{n-1}$ form a $d$-sequence.
Now we consider possible cases for an edge $e$:

\textbf{Case 1:} If $e$ is an edge between two pendant vertices of a graph in $\mathcal{T}_m$. For $m\geq 3$, $G$ is isomorphic to a graph in (a). For $m = 2$, $G$ is isomorphic to $C_n$.

\textbf{Case 2:} If $e$ is an edge between a pendant vertex and the center of a graph in $\mathcal{T}_m$. Then $G$ is isomorphic to a graph in (a).

\textbf{Case 3:} If $e$ is an edge between a pendant vertex and an internal vertex of a graph in $\mathcal{T}_m$ such that the cycle of $G$ has a vertex of degree $m$. Then $G$ is isomorphic to a graph in (b).

\textbf{Case 4:} If $e$ is an edge between two pendant vertices of a graph in $\mathcal{H}_m$ such that all the vertices of the cycle in $G$ have degree $2$ except two vertices. Then $G$ is isomorphic to a graph in (b).

\textbf{Case 5:} If $e$ is an edge between a pendant vertex and the center of a graph in $\mathcal{H}_m$ such that all the vertices of the cycle in $G$ have degree $2$ except two vertices. Then $G$ is isomorphic to a graph in (b).

\textbf{Case 6:} From Lemma \ref{Lemma2.T} and Lemma \ref{Lemma2.an}, it follows that if $G$ has a vertex of degree at least three, which is not a vertex of the cycle of $G$, then any sequence of edge-binomials of $G$ does not satisfy the $d$-sequence condition.
\end{proof}

In the following lemma, we prove parity edge-binomials of the graphs considered in Theorem \ref{Thm2.U} form a $d$-sequence.

\begin{Theorem} \label{Rem2.Parity2}
Let $H \in \mathcal{T}_m$ be a tree. Assume that char$(k) \neq 2$. Let $G$ be a  unicyclic graph obtained by adding
\begin{enumerate}[(a)]
    \item an edge between a pendant vertex of $H$ and the center of $H$, or
    \item an edge between two pendant vertices of $H$, or
    \item an edge between the center of $H$ and an internal vertex of $H$.
\end{enumerate}
Then parity edge-binomials of $G$ forms a $d$-sequence. In particular, $\I_G$ is of linear type.
\end{Theorem}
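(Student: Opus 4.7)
The plan is to parallel the proof of Theorem \ref{Thm2.U} by transporting the $d$-sequence from the $J$-side to the $\I$-side via the bipartite isomorphism $\phi$ of Remark \ref{Rem1.phi}, and to handle the cycle-closing generator by a direct colon computation. When the girth of $G$ is even, $G$ itself is bipartite; Remark \ref{Rem1.phi} then supplies a $k$-algebra isomorphism $\phi$ of $S$ with $\phi(J_G)=\I_G$, and every edge of $G$ crosses the bipartition, so $\phi(f_{ij})=g_{ij}$ for each $\{i,j\}\in E(G)$. Since a ring isomorphism preserves the $d$-sequence property, the claim in this case is immediate from Theorem \ref{Thm2.U}.

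Assume from now on that the girth of $G$ is odd, so $G$ is non-bipartite but $H\coloneqq G\setminus\{e\}$ is a tree, where $e=\{u,v\}$ is the cycle-closing edge used in the proof of Theorem \ref{Thm2.U}. I would order the parity edge-binomials $g_1,\ldots,g_n$ so that $g_1,\ldots,g_{n-1}$ are the parity edge-binomials of $H$ listed in the order of \cite[Theorem 2.1]{AN} and $g_n=g_e$. The isomorphism $\phi$ attached to the bipartition of $H$ satisfies $\phi(f_k)=g_k$ for $k\leq n-1$, so transporting \cite[Theorem 2.1]{AN} through $\phi$ shows that $g_1,\ldots,g_{n-1}$ form a $d$-sequence generating $\I_H$. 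It then remains to verify
\[ \langle g_0,\ldots,g_i\rangle : g_{i+1}\,g_n \;=\; \langle g_0,\ldots,g_i\rangle : g_n \]
for every $i\leq n-1$ (with $g_0=0$). For $i=n-1$ this is $\I_H:g_n^{\,2}=\I_H:g_n$, which is immediate from Lemma \ref{Lemma1.RD}(b) using $\operatorname{char}(k)\neq 2$.

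For $i\leq n-2$ the key observation is that $H_i$ is bipartite, so either Remark \ref{Rem1.pcolon} (when $u,v$ are disconnected in $H_i$, so $e$ is a bridge in $H_i\cup\{e\}$) or Remark \ref{Rem1.MCI}(c) (when $H_i$ already contains the $u$--$v$ path of $H$, so $H_i\cup\{e\}$ carries the odd cycle of $G$) yields the uniform description
\[ \I_{H_i}:g_n \;=\; \I_{H_i}+\langle f_{kl}\mid k,l\in N_{H_i}(u)\text{ or }k,l\in N_{H_i}(v)\rangle \;=\; \phi\bigl(J_{(H_i)_e}\bigr). \]
Applying $\phi^{-1}$, the required colon equality becomes $J_{(H_i)_e}:f_{i+1}=J_{(H_i)_e}$. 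In the bridge case this follows from the $d$-sequence condition already verified for $f_1,\ldots,f_n$ in Theorem \ref{Thm2.U}: since Remark \ref{Rem1.MCI}(a) gives $J_{H_i}:f_n=J_{(H_i)_e}$, we obtain $J_{(H_i)_e}:f_{i+1}=(J_{H_i}:f_n):f_{i+1}=J_{H_i}:(f_{i+1}f_n)=J_{H_i}:f_n=J_{(H_i)_e}$, as needed.

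The hard part will be the non-bridge case, where $H_i$ contains the full $u$--$v$ path, so $J_{H_i}:f_n$ carries additional path-monomial generators $g_{P,t}$ from Remark \ref{Rem1.MCI}(b). There the $f$-side $d$-sequence identity only tells us that $f_{i+1}$ is a non-zero-divisor modulo the larger ideal $J_{H_i}:f_n$, not modulo $J_{(H_i)_e}$ itself. To close this gap I would invoke Lemma \ref{Lemma2.T} (and its parity analog, obtained by the same argument using Remark \ref{Rem1.MCI}(c)), which forces the edge associated to $g_{i+1}$ to meet $e$ exactly when $i+1$ indexes an edge incident to $u$ or $v$, and to be vertex-disjoint from the path $P$ otherwise; combined with the explicit $\mathcal{T}_m$ structure of $H$ (a single vertex of degree $\geq 3$), this should rule out any element of $\I_{H_i}:g_{i+1}g_n$ not already in $\I_{H_i}:g_n$. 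Once all the colon equalities are verified the parity edge-binomials form a $d$-sequence, and hence $\I_G$ is of linear type by \cite{H1980}.
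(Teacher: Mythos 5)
Your proposal follows essentially the same route as the paper's proof: even girth is handled by the bipartite isomorphism $\phi$ of Remark \ref{Rem1.phi}, and for odd girth one orders the parity edge-binomials as in Theorem \ref{Thm2.U}, disposes of the $i=n-1$ colon condition with Lemma \ref{Lemma1.RD}(b), and reduces the conditions for $i<n-1$ to the $J$-side via Remarks \ref{Rem1.MCI} and \ref{Rem1.pcolon}. The bridge/non-bridge dichotomy you isolate is precisely the content the paper compresses into ``follows from Remark \ref{Rem1.phi} and Lemma \ref{Rem1.MCI},'' so your treatment is, if anything, more explicit than the published argument at the one delicate step.
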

\begin{proof}
    If $G$ has an even girth, the statement follows from Remark \ref{Rem1.phi}. If $G$ has an odd girth, then we take a sequence of parity edge-binomials in the same order as edge-binomials taken in Theorem \ref{Thm2.U}. It is enough to prove that $(d_0,d_1,\ldots,d_{i}):d_{i+1}d_{n} = (d_0,d_1,\ldots,d_{i}):d_{n}$ for all $i$, since the graph associated with parity edge-binomials $d_1,\ldots,d_{n-1}$ is a bipartite graph. For $i =n-1$, the statement follows from Lemma \ref{Lemma1.RD}. For $i < n-1$ the statement follows from Remark \ref{Rem1.phi} and Lemma \ref{Rem1.MCI}.
\end{proof}

\subsection*{Conclusion} \begin{proof}[Proof of Theorem~\ref{thmU.1}]
The first statement follows from Theorem \ref{Thm2.U} and Lemma \ref{Pro:2}. The second statement follows from Theorem \ref{Rem2.Parity2}. 
\end{proof}

\begin{remark}
Let $G$ be a unicyclic graph on $[6]$ with edge set $\{\{1,2\},\{2,3\},\{1,3\},\{1,4\},\{4,5\},\{4,6\}\}$. Then by using Macaulay2 \cite{M2} one can check that a sequence of parity edge-binomials in the following order $g_{12},g_{23},g_{13},g_{45},g_{46},g_{14}$ form a $d$-sequence. Moreover, one can observe that $G$ is not isomorphic to any graphs mentioned in Theorem \ref{Rem2.Parity2}.
\end{remark}

\begin{question}Classify all finite simple graphs such that their parity edge-binomials form a $d$-sequence.
\end{question}

\section{Regularity of (parity) binomial edge ideal of \textit{d}-sequence graphs} \label{sec.reg}

In this section, we obtain the regularity of the parity binomial edge ideal of $d$-sequence unicyclic graphs. We obtain the regularity of the product of the binomial edge ideal of a complete graph and the parity binomial edge ideal of a disjoint union of paths. The regularity of binomial edge ideals of unicyclic graphs is studied in \cite{S21}. For the sake of completeness, first, we provide the regularity of the binomial edge ideals of connected unicyclic graphs whose edge-binomials form a $d$-sequence.

\vspace{2mm}

\begin{lemma} \label{Lemma3.U1}
Let $H \in \mathcal{T}_{m}$ be a tree. Consider a unicyclic graph $G$ constructed by adding
\begin{enumerate}[(a)]
    \item an edge between a pendant vertex of $H$ and the center of $H$, then $\reg {S}/{J_{G}} = 2+ \sum_{i=1}^{m}{s_{(i)}} -1$.
    \item an edge between the center of $H$ and an internal vertex of $H$, then one has 
\begin{equation*}
 \reg {S}/{J_{G}} = 
    \begin{cases}
      1  + \sum_{i=1}^{m}{s_{(i)}} & \text{if girth}(G) = 3\\
     \sum_{i=1}^{m}{s_{(i)}} & \text{if girth}(G) \geq 4.
    \end{cases}       
\end{equation*}
\end{enumerate}
\end{lemma}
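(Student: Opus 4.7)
The proof naturally splits into two cases based on the girth of $G$.

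If the girth equals $3$, then in both (a) and (b) the graph $G$ is a block graph whose only non-$K_2$ block is the unique triangle. I would first verify that $G$ is flower-free: an induced flower $F_{h,k}(v)$ with $cdeg(v) \geq 3$ would require at least three triangles meeting at $v$, which is impossible since $G$ has only one triangle, or an induced $K_{1,3}$ with $v$ as a leaf, which forces a neighbour of $v$ of graph-degree $\geq 3$ whose three neighbours form an independent set. In (a) the only vertex of $G$ of graph-degree $\geq 3$ is $k_0$, but none of its neighbours has degree $\geq 3$; in (b) the only vertices of degree $\geq 3$ are $k_0$ and the triangle-vertex $p_{k_2}$, and in each configuration a triangle edge obstructs the required independence among the three leaves. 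Hence $G$ is flower-free; Remark~\ref{Rem.FF} gives $\reg S/J_G = i(G) + 1$, and substituting $i(G) = \sum_{i=1}^m s(i)$ from Remark~\ref{rem3.iG} yields the claimed value $1 + \sum s_{(i)}$ in both (a) and (b).

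For girth $\geq 4$, $G$ is no longer a block graph. My plan is to pick a cycle edge $e'$ whose removal produces a tree $G' = G \setminus e'$ of computable binomial edge ideal regularity, and then transfer the computation through the Mohammadi--Sharifan short exact sequence
\begin{equation*}
0 \longrightarrow S/(J_{G'} : f_{e'})(-2) \xrightarrow{\,\cdot\, f_{e'}\,} S/J_{G'} \longrightarrow S/J_G \longrightarrow 0.
\end{equation*}
For part (a) I would take $e' = \{p_{k_1}, p_{k_2}\}$; the tree $G'$ lies in $\mathcal{T}_{m+1}$, with $k_0$ as its unique high-degree vertex carrying $m + 1$ emanating paths (the $m - 1$ original pendant branches together with the two arcs of the severed cycle), so Remark~\ref{rem1.Reg2} gives $\reg S/J_{G'} = 1 + \sum s_{(i)}$, matching the target. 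For part (b) the analogous removal of an internal cycle edge produces a tree $G'$ with two hubs, whose regularity is computed (either by a gluing decomposition via Remark~\ref{Rem1.GUG} at a suitably chosen free vertex, or by peeling an edge to reduce to $\mathcal{T}_{m'}$) to equal $\sum s_{(i)}$, again matching the target.

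By Remark~\ref{Rem1.MCI}(b), $J_{G'} : f_{e'} = J_{(G')_{e'}} + (g_{P, 0}, \ldots, g_{P, s})$, where $P$ is the unique path in $G'$ joining the endpoints of $e'$. For our $e'$ both endpoints have a single neighbour in $G'$, so $(G')_{e'} = G'$ and the colon ideal equals $J_{G'}$ plus a staircase monomial ideal on the $x$- and $y$-variables of the intermediate vertices of $P$. The decisive estimate is
\begin{equation*}
\reg S/(J_{G'} + (g_{P, 0}, \ldots, g_{P, s})) \;\leq\; \reg S/J_{G'} - 2,
\end{equation*}
which together with Lemma~\ref{Lemma1.Reg}(c) forces $\reg S/J_G = \reg S/J_{G'}$, the desired value. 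I plan to verify this estimate by filtering through the monomial generators of the staircase ideal: successive elimination of the variables $x_v, y_v$ for intermediate vertices $v$ of $P$ collapses the star structure of $G'$ and reduces the binomial part of the ideal to a disjoint union of shorter paths whose regularity is a direct sum. The hard part will be this staircase-plus-binomial regularity estimate: it requires careful coordination between the staircase degree $s(k)$, the pendant-path lengths, and the variables eliminated in the filtration. I expect to establish it either by induction on $\sum s_{(i)}$ (peeling one pendant at a time and tracking how both sides of the estimate change) or by direct appeal to the unicyclic binomial edge ideal regularity formulas of Sharifan \cite{S21}, which the authors cite at the opening of this section as the principal reference for the girth $\geq 4$ case.
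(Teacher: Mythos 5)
Your girth-$3$ argument is correct and is exactly what the paper does: for both (a) and (b) the graph is a connected block graph whose only non-edge block is the unique triangle, it is flower-free, and Remark~\ref{Rem.FF} together with the internal-vertex count of Remark~\ref{rem3.iG} gives $\reg S/J_G=i(G)+1=1+\sum_{i=1}^m s_{(i)}$. For girth $\geq 4$, however, your primary route has a genuine gap, and it sits precisely at the step you yourself flag as ``the hard part.'' The decisive estimate $\reg S/\bigl(J_{G'}+(g_{P,0},\ldots,g_{P,s})\bigr)\leq \reg S/J_{G'}-2$ is not proved, and it is not a technicality: computing the regularity of $J_{G'}$ plus the staircase monomial ideal is the entire content of the hard case (it is essentially what \cite{S21} is about), so a proof that defers it has not yet proved the lemma. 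Worse, even if the estimate were granted exactly as stated, it would not close the argument. In the short exact sequence the first module is $S/(J_{G'}:f_{e'})(-2)$, whose regularity is $\reg S/(J_{G'}:f_{e'})+2$; if $\reg S/(J_{G'}:f_{e'})=\reg S/J_{G'}-2$ then the first and second modules have \emph{equal} regularity, and Lemma~\ref{Lemma1.Reg}(c) then yields only the inequality $\reg S/J_G\leq \reg S/J_{G'}$, not equality. To conclude you need either a strict inequality, or the exact value $\reg S/(J_{G'}:f_{e'})=\reg S/J_{G'}-1$ (which also suffices for equality in (c)), or an independent matching lower bound on $\reg S/J_G$ --- for instance via the induced subtree $G\setminus\{p_{k_{s(k)+1}}\}\in\mathcal{T}_m$, which has regularity $1+\sum s_{(i)}$ by Remark~\ref{rem1.Reg2}, combined with the induced-subgraph monotonicity of regularity for binomial edge ideals from \cite{MM}. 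None of these is supplied. A further, smaller issue: in part (b) the tree $G\setminus e'$ has two high-degree vertices and lies outside $\mathcal{T}_{m'}$, so its regularity is not covered by Remark~\ref{rem1.Reg2}; you would need \cite[Lemma 3.1]{AN} (as the paper uses elsewhere) rather than a gluing via Remark~\ref{Rem1.GUG}, since the hubs are not free vertices of the pieces.

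For comparison, the paper disposes of the girth $\geq 4$ case by citing \cite[Corollary 4.10]{S21} together with Remark~\ref{Rem1.GUG}, which is exactly your stated fallback. So the citation route is available and legitimate here; but as a self-contained argument your proposal is incomplete until the colon-ideal regularity is actually computed (with the correct value, not just the borderline upper bound) and the equality case of Lemma~\ref{Lemma1.Reg}(c) is handled.
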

\begin{proof}
 If $G$ has a girth greater than or equal to $4$, then the statement follows from \cite[Corollary 4.10]{S21} and Remark \ref{Rem1.GUG}. If $G$ has girth equal to $3$, then $G$ is a flower-free connected graph. Thus the statement follows from Remark \ref{Rem.FF}. 
\end{proof}

In \cite{A2021reg} Kumar obtained an upper bound for parity binomial edge ideals of a non-bipartite graph $G$ such that $G \setminus e$ is a bipartite graph, where $e \in E(G)$. We compute the regularity of parity binomial edge ideals of connected unicyclic graphs whose parity edge-binomials form a $d$-sequence. By Remark \ref{Rem1.phi}, one can consider only unicyclic graphs with odd girth.

\begin{lemma} \label{lemma5.PU1}
Let $H \in \mathcal{T}_{m}$ be a tree. Let $G$ be a unicyclic graph with an odd girth. If $G$ is obtained by adding an edge between a pendant vertex of $H$ and the center of $H$. Then one has $$\reg {S}/{\mathcal{I}_{G}} = 2+ \sum_{i=1}^{m}{s_{(i)}}.$$
\end{lemma}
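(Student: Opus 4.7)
The plan is to exploit the short exact sequence that splits off the cycle-closing edge and combine it with a block-graph computation for the resulting colon ideal. Write $e_1 = \{k_0, p_{k_{s(k)+1}}\}$ for the edge such that $G = H \cup \{e_1\}$; since $G$ has odd girth, $G$ is non-bipartite while $H$ is a tree, hence bipartite. The short exact sequence
\[
0 \longrightarrow S/(\mathcal{I}_H : g_{e_1})(-2) \longrightarrow S/\mathcal{I}_H \longrightarrow S/\mathcal{I}_G \longrightarrow 0
\]
reduces the problem to computing the regularities of the two outer modules and then invoking Lemma \ref{Lemma1.Reg}(c).

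For the middle module, the bipartite isomorphism $\phi$ of Remark \ref{Rem1.phi} gives $\reg S/\mathcal{I}_H = \reg S/J_H$, and since $H$ is a connected flower-free block graph (being a tree) with no isolated vertex, Remark \ref{Rem.FF} yields $\reg S/J_H = i(H) + 1$. Using Notation \ref{defPK}, the internal vertices of $H$ are $k_0$ together with every $p_{i_j}$ for $1 \le j \le s(i)$, so $i(H) = 1 + \sum_{i=1}^m s(i)$ and $\reg S/\mathcal{I}_H = 2 + \sum_{i=1}^m s(i)$.

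For the colon ideal, Remark \ref{Rem1.MCI}(c) gives $\mathcal{I}_H : g_{e_1} = \phi(J_{H_{e_1}})$, and $\phi$ being a graded automorphism of $S$ yields $\reg S/(\mathcal{I}_H : g_{e_1}) = \reg S/J_{H_{e_1}}$. The pendant $p_{k_{s(k)+1}}$ has a unique neighbor in $H$, so the only new edges in $H_{e_1}$ arise from pairs in $N_H(k_0) = \{p_{1_1}, \dots, p_{m_1}\}$; hence $H_{e_1}$ decomposes as the clique $K_{m+1}$ on $\{k_0, p_{1_1}, \dots, p_{m_1}\}$ together with a path of $s(i)$ edges attached at each $p_{i_1}$. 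Each gluing vertex $p_{i_1}$ lies in exactly one maximal clique of the clique block and in exactly one maximal clique of its path block, so the free-vertex hypothesis of Remark \ref{Rem1.GUG} is met and
\[
\reg S/J_{H_{e_1}} = \reg S/J_{K_{m+1}} + \sum_{i=1}^m \reg S/J_{P_{s(i)+1}} = 1 + \sum_{i=1}^m s(i),
\]
using $\reg S/J_{K_n} = 1$ and the fact that a path on $\ell$ vertices has regularity $\ell - 1$ (both via Remark \ref{Rem.FF}, after counting internal vertices).

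Feeding these into Lemma \ref{Lemma1.Reg}(c), the two candidate values are $\reg S/(\mathcal{I}_H : g_{e_1})(-2) - 1 = 2 + \sum_{i=1}^m s(i)$ and $\reg S/\mathcal{I}_H = 2 + \sum_{i=1}^m s(i)$; since the unshifted regularities $3 + \sum s(i)$ and $2 + \sum s(i)$ are distinct, the lemma forces equality and yields $\reg S/\mathcal{I}_G = 2 + \sum_{i=1}^m s(i)$. The main obstacle is a careful bookkeeping check that the gluing vertices of $H_{e_1}$ meet the free-vertex hypothesis of Remark \ref{Rem1.GUG}, which is immediate from the explicit block description above; the rest is a direct application of the regularity lemma.
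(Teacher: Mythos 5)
Your proof is correct, and it follows the same overall strategy as the paper --- deleting one edge $e$ of the cycle, forming the short exact sequence $0 \to S/(\I_{G\setminus e}:g_e)(-2) \to S/\I_{G\setminus e} \to S/\I_G \to 0$, computing the two outer regularities via Remark \ref{Rem1.MCI}(c) and the bipartite isomorphism $\phi$, and closing with Lemma \ref{Lemma1.Reg}(c) --- but with a different choice of edge, which changes the intermediate computation. The paper deletes a cycle edge $e=\{u,v\}$ with $\deg_G(u)=\deg_G(v)=2$, so that $(G\setminus e)_e = G\setminus e$ is simply a tree in $\mathcal{T}_{m+1}$ and both outer terms are read off from Remark \ref{rem1.Reg2}; you instead delete the cycle-closing edge $e_1=\{k_0,p_{k_{s(k)+1}}\}$ at the center, so that $G\setminus e_1 = H$ itself and the colon graph $H_{e_1}$ becomes the block graph $K_{m+1}$ with paths glued at free vertices, whose regularity $1+\sum_{i=1}^m s(i)$ you compute via Remark \ref{Rem1.GUG}. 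Both routes land on the same pair of values ($\reg M_1 = 3+\sum s(i)$ after the shift versus $\reg M_2 = 2+\sum s(i)$ in your version), and since they differ, Lemma \ref{Lemma1.Reg}(c) gives equality either way. Your bookkeeping of $H_{e_1}$ (the pendant endpoint contributes no new edges, each $p_{i_1}$ is free in both the clique and its path) is accurate; the only trivial slip is calling $3+\sum s(i)$ and $2+\sum s(i)$ the ``unshifted'' regularities when the first already incorporates the twist by $-2$, but the inequality check and conclusion are right. The paper's choice of edge is marginally more economical because it avoids the block-graph gluing computation, while yours has the mild advantage of reusing the explicit description of $\I_H : g_{e_1}$ that reappears in the powers arguments of Section \ref{sec.PBEI}.
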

\begin{proof}
Let $e=\{u,v\}$ be an edge of the cycle in $G$ such that $\deg_G(u)=\deg_G(v)=2$. Consider the following short exact sequence 
\begin{equation} \label{eq:PT1}
    0 \longrightarrow \frac{S}{(\mathcal{I}_{G \setminus e}):g_{e}}(-2) \longrightarrow \frac{S}{\mathcal{I}_{G \setminus e}}
  \longrightarrow \frac{S}{\mathcal{I}_{G}}   \longrightarrow  0.
\end{equation}
Clearly, $G\setminus e$ is a tree. From Remarks \ref{Rem1.MCI}(c) and \ref{rem1.Reg2} it follows that,
$$\reg {S}/{((\mathcal{I}_{G \setminus e}):g_{e})} = \reg {S}/{{J}_{(G \setminus e)_e}} = \reg {S}/{J_{G \setminus e}}  = 2+ \sum_{i=1}^{m}{s_{(i)}} -1.$$
Thus applying Lemma \ref{Lemma1.Reg}(c) to short exact sequence (\ref{eq:PT1}) gives  $\reg {S}/{\mathcal{I}_{G}} = 2+ \sum_{i=1}^{m}{s_{(i)}}.$  
\end{proof}

\begin{lemma} \label{lemma5.PU2}
Let $H \in \mathcal{T}_{m}$ be a tree. Let $G$ be a unicyclic graph with an odd girth. If $G$ is obtained by adding an edge between the center of $H$ and an internal vertex of $H$, then 
\begin{equation*}
 \reg {S}/{\mathcal{I}_{G}} = 
      2  + \sum_{i=1}^{m}{s_{(i)}}-1.  
\end{equation*}
\end{lemma}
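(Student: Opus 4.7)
The plan is to follow the pattern of Lemma~\ref{lemma5.PU1}: pick a suitable cycle-edge $e'$ of $G$ and feed the short exact sequence
\[
0 \longrightarrow \frac{S}{(\mathcal{I}_{G\setminus e'}) : g_{e'}}(-2) \longrightarrow \frac{S}{\mathcal{I}_{G\setminus e'}} \longrightarrow \frac{S}{\mathcal{I}_G} \longrightarrow 0
\]
into Lemma~\ref{Lemma1.Reg}(c). Write the added edge of $G$ as $\{k_0, p_{k_j}\}$, where $p_{k_j}$ is an internal vertex of $H$; the unique cycle of $G$ is then $k_0, p_{k_1}, \dots, p_{k_j}, k_0$, and the odd-girth hypothesis forces $j \geq 2$ to be even. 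I choose $e' = \{p_{k_{j-1}}, p_{k_j}\}$, the cycle-edge incident to the degree-$3$ vertex $p_{k_j}$.

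The graph $G \setminus e'$ is a tree in $\mathcal{T}_{m+1}$ with center $k_0$: the branches of $H$ for $i \neq k$ are unchanged, while the $k$-th branch splits into a shortened branch $p_{k_1}, \dots, p_{k_{j-1}}$ and a new branch $p_{k_j}, \dots, p_{k_{s(k)+1}}$ reattached to $k_0$ via the edge added to form $G$. A direct count gives $\sum_{i=1}^{m+1} s'_{(i)} = \sum_{i=1}^{m} s_{(i)} - 1$, hence $i(G\setminus e') = \sum_{i=1}^{m} s_{(i)}$. Since $G\setminus e'$ is bipartite, Remark~\ref{Rem1.phi} together with Remark~\ref{rem1.Reg2} (at $s=1$) yields $\reg S/\mathcal{I}_{G\setminus e'} = \reg S/J_{G\setminus e'} = 1 + \sum_{i=1}^{m} s_{(i)}$.

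For the colon ideal, Remark~\ref{Rem1.MCI}(c) applies (as $G$ is non-bipartite and $G\setminus e'$ is bipartite) and gives $\mathcal{I}_{G\setminus e'} : g_{e'} = \phi(J_{(G\setminus e')_{e'}})$. In $G\setminus e'$, the vertex $p_{k_{j-1}}$ has a unique neighbor, while $N(p_{k_j}) = \{k_0, p_{k_{j+1}}\}$, so $(G\setminus e')_{e'}$ is obtained from $G\setminus e'$ by adding only the edge $\{k_0, p_{k_{j+1}}\}$, forming the triangle $T = \{k_0, p_{k_j}, p_{k_{j+1}}\}$. I claim that $(G\setminus e')_{e'}$ is a connected flower-free block graph with $i((G\setminus e')_{e'}) = \sum_{i=1}^{m} s_{(i)} - 1$: the vertex $p_{k_j}$ loses its internal status (both of its edges now sit inside $T$), while every other vertex retains its status; and flower-freeness holds because $k_0$ is the only vertex with clique-degree $\geq 3$, and no neighbor of $k_0$ admits two further neighbors non-adjacent to $k_0$, ruling out any induced $K_{1,3}$ with $k_0$ as a leaf. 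Hence Remark~\ref{Rem.FF} gives $\reg S/(\mathcal{I}_{G\setminus e'}:g_{e'}) = \reg S/J_{(G\setminus e')_{e'}} = \sum_{i=1}^{m} s_{(i)}$.

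Putting these into Lemma~\ref{Lemma1.Reg}(c), the two outer terms of the short exact sequence have regularities $2 + \sum_{i=1}^{m} s_{(i)}$ and $1 + \sum_{i=1}^{m} s_{(i)}$, which are distinct; the inequality $\reg M_1 \neq \reg M_2$ forces equality, and we obtain $\reg S/\mathcal{I}_G = 1 + \sum_{i=1}^{m} s_{(i)}$, as claimed. The main obstacle is the bookkeeping for $(G\setminus e')_{e'}$: uniformly verifying flower-freeness for all even $j \geq 2$ and correctly tracking the internal-vertex count in the boundary case $s_{(k)} = j$, where $p_{k_{j+1}}$ is a leaf of $H$ absorbed entirely into the triangle $T$.
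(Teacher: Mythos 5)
Your proof is correct and follows essentially the same route as the paper: the short exact sequence given by multiplication by $g_{e'}$ on $S/\mathcal{I}_{G\setminus e'}$, Remark \ref{Rem1.MCI}(c) to identify the colon ideal with $\phi(J_{(G\setminus e')_{e'}})$, the tree/block-graph regularity facts (Remarks \ref{rem1.Reg2} and \ref{Rem.FF}), and Lemma \ref{Lemma1.Reg}(c). The only difference is your uniform choice of the cycle-edge incident to the degree-$3$ vertex, which puts $G\setminus e'$ in $\mathcal{T}_{m+1}$ for every even $j\geq 2$ and thereby merges into one computation the paper's two cases (girth $3$, where it deletes an edge meeting the center, versus girth $\geq 5$, where it deletes an edge with both endpoints of degree $2$ so that $(G\setminus e)_{e}=G\setminus e$); your internal-vertex counts and the flower-freeness check for the triangle created in $(G\setminus e')_{e'}$ are accurate.
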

\begin{proof}
Let $G$ have the girth greater than or equal to $4$ and $e=\{u,v\}$ be an edge of the cycle in $G$ such that $\deg_G(u)=\deg_G(v)=2$. 
Then from Remark \ref{Rem1.MCI}(c) and {\cite[Lemma 3.1]{AN}} it follows that 
$$\reg {S}/{((\mathcal{I}_{G \setminus e}):g_{e})} = \reg {S}/{{J}_{(G \setminus e)_e}} = \reg {S}/{J_{G \setminus e}}  = \sum_{i=1}^{m}{s_{(i)}}.$$
Therefore, applying Lemma \ref{Lemma1.Reg}(c) to exact sequence (\ref{eq:PT1}) yields  $\reg {S}/{\mathcal{I}_{G}} = 2 + \sum_{i=1}^{m}{s_{(i)}} -1.$

If $G$ have girth equal to $3$ then choose an edge of the cycle in $G$, $e=\{u,v\}$ such that $\deg_G(u)= m+1$ and $\deg_G(v)=2$. $G\setminus e$ is a graph in $\mathcal{T}_{m+1}$, thus from Remark \ref{Rem1.phi} and Remark \ref{rem1.Reg2} it follows that 
$$\reg {S}/{\mathcal{I}_{G \setminus e}} = \reg {S}/{J_{G \setminus e}} =1+ \sum_{i=1}^{m}{s_{(i)}}.$$
By  Remark \ref{Rem1.MCI}(c) we get $J_{(G\setminus e)_e}$ is a graph obtained by gluing $K_{m}$ and gluing paths at free vertices. By Remark \ref{Rem1.GUG} one obtains 
$$\reg {S}/{((\mathcal{I}_{G \setminus e}):g_{e})} = \sum_{i=1}^{m}{s_{(i)}}.$$
Therefore, applying Lemma \ref{Lemma1.Reg}(c) to exact sequence (\ref{eq:PT1}) yields  $\reg {S}/{\mathcal{I}_{G}} = 1 + \sum_{i=1}^{m}{s_{(i)}}.$
Hence we conclude the proof.
\end{proof}

Next, we obtain the regularity of the product of parity binomial edge of disjoint union of paths and binomial edge ideal a complete graph.

\begin{Theorem} \label{Thm3.parityIJ}
Let $K_{m}$ be a complete graph and $H=\{P_1',\ldots P_t'\}$ be a disjoint union of paths such that:
\begin{enumerate}
    \item for any $i$, if $K_m \cap P_i' \neq \emptyset$, then $V(K_m) \cap V(P_i') = v_i$, for some $v_i$ which is free vertex in $P_i'$;
    \item $V(K_m) \cap V(P_i') \cap V(P_j')= \emptyset$, for all distinct $i$ and $j$.
\end{enumerate}
Let $n$ be the number of edges in $H$. Then, for any $n \geq 1$ and for any $m \geq 2$, we have 
$$\reg \frac{S}{\I_{H}J_{K_{m}}} = 2+n.$$
\end{Theorem}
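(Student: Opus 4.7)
The strategy is to reduce the regularity of the product $\I_H \cdot J_{K_m}$ to that of the intersection $\I_H \cap J_{K_m}$, and then invoke the Mayer--Vietoris short exact sequence together with Lemma~\ref{Lemma1.Reg}(a). Since $H$ is a disjoint union of paths (hence bipartite), Remark~\ref{Rem1.phi} gives $\I_H = \phi(J_H)$, and because the binomial edge ideal of each path is a complete intersection sitting in disjoint variables, the $n$ parity edge-binomials $g_1,\dots,g_n$ generating $\I_H$ form a regular sequence on $S$. The central technical claim is that $g_1,\dots,g_n$ remain a regular sequence on $S/J_{K_m}$; once this is known, the Koszul complex shows $\mathrm{Tor}_1^S(S/\I_H, S/J_{K_m}) = 0$, which forces $\I_H \cap J_{K_m} = \I_H \cdot J_{K_m}$.

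To verify the regular sequence claim, I would parametrize the Segre variety $V(J_{K_m})$ by $x_i = a b_i$, $y_i = c b_i$ for $i \in V(K_m)$. For each path $P_j'$ attached at $v_j$, the first parity binomial factors on $V(J_{K_m})$ as $b_{v_j}(a x_{w_1^{(j)}} - c y_{w_1^{(j)}})$, a codimension-one locus decomposing into two irreducible components; each subsequent parity binomial $g_l^{(j)}$ with $l \geq 2$ (and every parity binomial of a path not attached to $K_m$) involves a fresh pair of variables $x_{w_l^{(j)}}, y_{w_l^{(j)}}$ and therefore imposes an independent codimension-one constraint on every component simultaneously. A step-by-step dimension count then gives $\dim S/(\I_H + J_{K_m}) = \dim S/J_{K_m} - n$, and the Cohen--Macaulayness of the Segre coordinate ring $S/J_{K_m}$ upgrades this equality of dimensions to the regular sequence property.

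With $\I_H \cap J_{K_m} = \I_H \cdot J_{K_m}$ in hand, apply the short exact sequence
\[
0 \to S/(\I_H \cap J_{K_m}) \to S/\I_H \oplus S/J_{K_m} \to S/(\I_H + J_{K_m}) \to 0,
\]
and compute: $\reg S/\I_H = n$ (Koszul resolution of the quadratic complete intersection), $\reg S/J_{K_m} = 1$ (standard for the Segre $\mathbb{P}^1 \times \mathbb{P}^{m-1}$), and $\reg S/(\I_H + J_{K_m}) = \reg S/J_{K_m} + n = n+1$ (tensor-product formula: the vanishing of $\mathrm{Tor}_{\geq 1}$ makes $\reg(S/\I_H \otimes_S S/J_{K_m})$ additive). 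Since $\reg(S/\I_H \oplus S/J_{K_m}) = n$ while $\reg S/(\I_H + J_{K_m}) + 1 = n+2$ are distinct, Lemma~\ref{Lemma1.Reg}(a) forces $\reg S/(\I_H \cap J_{K_m}) = n+2$, which is the desired formula.

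The main obstacle is the rigorous justification of the regular sequence assertion. The subtle point is the attachment binomial $g_1^{(j)} = b_{v_j}(a x_{w_1^{(j)}} - c y_{w_1^{(j)}})$, whose vanishing splits the current variety into two irreducible components of equal dimension; one must then carefully argue that every subsequent parity binomial, involving fresh pendant-path coordinates, avoids each associated prime produced by this step-by-step decomposition. An alternative approach would be induction on $n$, stripping an outer pendant edge $\{u,w\}$ of $H$ with $w \notin V(K_m)$ and using the colon sequence with $g_e$ to reduce to the case $H \setminus \{e\}$; but the non-zero-divisor input needed there (that $x_u,y_u$ are regular on $S/(\I_{H\setminus e}\cdot J_{K_m})$, which in turn relies on understanding the associated primes of $\I_{H\setminus e}$ of bipartite origin) is essentially the same technical content.
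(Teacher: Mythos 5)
Your argument is essentially correct, but it takes a genuinely different (and much longer) route than the paper. The paper's proof is a two-line reduction: choosing the bipartition $V(H)=L_1\sqcup L_2$ of the disjoint union of paths so that all attachment vertices lie in $L_1$, the isomorphism $\phi$ of Remark~\ref{Rem1.phi} fixes $J_{K_m}$ and carries $\I_H$ to $J_H$, so $\reg S/(\I_H J_{K_m})=\reg S/(J_H J_{K_m})$, and the latter is exactly \cite[Theorem 3.1]{AN}. You observe the identity $\I_H=\phi(J_H)$ yourself but only use it to see that the generators of $\I_H$ form a regular sequence; had you pushed it one step further to transport the whole product, you would have landed on the paper's citation. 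What your approach buys instead is a self-contained proof: the chain ``regular sequence on $S/J_{K_m}$ $\Rightarrow$ $\operatorname{Tor}_{\geq 1}(S/\I_H,S/J_{K_m})=0$ $\Rightarrow$ $\I_H\cap J_{K_m}=\I_H J_{K_m}$ and $\reg S/(\I_H+J_{K_m})=n+1$'', followed by Lemma~\ref{Lemma1.Reg}(a) applied to the Mayer--Vietoris sequence with $\reg(S/\I_H\oplus S/J_{K_m})=n\neq n+1$, correctly yields $n+2$. The one step you leave as a sketch --- that the $n$ parity edge-binomials of $H$ form a regular sequence on $S/J_{K_m}$ --- can be closed more cleanly than your component-by-component Segre analysis: since $S/J_{K_m}$ is Cohen--Macaulay, it suffices to check $\operatorname{ht}(\I_H+J_{K_m})=\operatorname{ht}(J_{K_m})+n$, and applying $\phi$ turns $\I_H+J_{K_m}$ into $J_{H\cup K_m}$, the binomial edge ideal of a block graph whose dimension is given by the standard cut-set formula; the count comes out right precisely because each $v_i$ is a free vertex of $P_i'$ and the paths meet $K_m$ in distinct single vertices. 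So the approach is sound and the conclusion matches the paper; only that height computation needs to be written out to make the proof complete.
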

\begin{proof}
Since $H$ is a bipartite graph, one can write $V(H) = L_1 \sqcup L_2$ such that $L_2 \cap V(K_m) = \emptyset$. Take $V_1 = L_1 \cup V(K_m)$ and $V_2 =L_2$ as partition of $V(H)\cup V(K_m)$. From Remark \ref{Rem1.phi} it follows that $\phi(\I_HJ_{K_m})=J_HJ_{K_m}$. As desired, the statement follows from \cite[Theorem 3.1]{AN}.   
\end{proof}

\section{Regularity of powers of binomial edge ideals} \label{sec.BEI}

In this section, we obtain precise expressions for the regularity of powers of the binomial edge ideal of $d$-sequence unicyclic graphs. The regularity of powers of the binomial edge ideals of cycle graphs is computed in \cite{JAR20}. We recall the statement below for the sake of completeness.

\begin{remark}
\cite[Theorem 3.6]{JAR20} Let $G = C_n$ be a cycle. Then, for all $s\geq 1$, $\reg(S/J_{G}^s ) = 2s + n - 4$.
\end{remark}

Next, we compute the regularity of powers of binomial edge ideals of connected unicyclic graphs whose edge-binomial forms a $d$-sequence. 

\begin{Theorem} \label{Thm4.UC1}
Let $H \in \mathcal{T}_{m}$ be a tree. Let $G$ be a unicyclic graph obtained by adding an edge between a pendant vertex of $H$ and the center of $H$. Let $f_1,\ldots,f_n$ be a $d-$sequence edge-binomials of $G$ with $f_0 =  0 \in S$. Then, for any $i=0,1,\ldots,n-1$, we have
$$\reg \frac{S}{(f_1,\ldots,f_i)+J_{G}^s} = 2s + \sum_{j=1}^{m}{s_{(j)}}-1, \text{ for all } s\geq 1.$$
In particular, $\reg {S}/{J_{G}^s} = 2s + \sum_{j=1}^{m}{s_{(j)}}-1$, for all $s\geq 1$.
\end{Theorem}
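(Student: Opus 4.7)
The plan is to prove, by induction, that for all $s \ge 1$ and $0 \le i \le n-1$,
\[
\reg \frac{S}{(f_1,\ldots,f_i) + J_G^s} = 2s + a - 1, \qquad a := \sum_{j=1}^m s_{(j)}.
\]
I use an outer induction on $s$ and, for each $s \ge 2$, a descending inner induction on $i$ from $n-1$ down to $0$. The outer base $s=1$ is immediate: every $f_j$ lies in $J_G$, so $(f_1,\ldots,f_i) + J_G = J_G$ and the right-hand side is $\reg S/J_G = 1 + a$ by Lemma~\ref{Lemma3.U1}(a).

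For the inner base $i = n-1$ with $s \ge 2$, note that $(f_1,\ldots,f_{n-1}) = J_H$ and $J_G^s \equiv (f_n^s) \pmod{J_H}$, so the ideal is $J_H + (f_n^s)$. I apply
\[
0 \longrightarrow \frac{S}{J_H : f_n}(-2s) \longrightarrow \frac{S}{J_H} \longrightarrow \frac{S}{J_H + (f_n^s)} \longrightarrow 0,
\]
using $J_H : f_n^s = J_H : f_n$ from Lemma~\ref{Lemma1.RD}(a). From Remark~\ref{Rem1.MCI}(b), $J_H : f_n = J_{H_{e_n}} + (g_{P,t})$ for the unique $H$-path $P$ from $k_0$ to the chosen pendant; using the free-vertex gluing decomposition of $H_{e_n}$ (which contains a $K_{m+1}$ on $\{k_0,p_{1_1},\ldots,p_{m_1}\}$ with $m$ paths attached) via Remark~\ref{Rem1.GUG}, together with an accounting for the path monomials, one verifies $\reg S/(J_H : f_n) = a$. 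Combined with $\reg S/J_H = 2 + a$ (Remark~\ref{rem1.Reg2} at $s=1$, since $i(H) = 1+a$), Lemma~\ref{Lemma1.Reg}(c) gives $\reg S/(J_H + (f_n^s)) = 2s + a - 1$ for all $s \ge 2$, the shifted left term strictly dominating.

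For the inner step $i+1 \to i$ with $0 \le i \le n-2$, I apply
\[
0 \longrightarrow \frac{S}{((f_1,\ldots,f_i) + J_G^s) : f_{i+1}}(-2) \longrightarrow \frac{S}{(f_1,\ldots,f_i) + J_G^s} \longrightarrow \frac{S}{(f_1,\ldots,f_{i+1}) + J_G^s} \longrightarrow 0.
\]
The right-hand quotient has regularity $2s + a - 1$ by the inner inductive hypothesis. Lemma~\ref{Lemma1.PD} (the $d$-sequence colon formula) collapses the left colon to $((f_1,\ldots,f_i) : f_{i+1}) + J_G^{s-1}$, and since both $H_i$ and $H_i \cup e_{i+1}$ are forests for $i+1 \le n-1$ (Lemma~\ref{Lemma2.an}), Remark~\ref{Rem1.MCI}(a) simplifies further to $(f_1,\ldots,f_i) : f_{i+1} = J_{(H_i)_{e_{i+1}}}$. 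The crucial sub-claim is
\[
\reg S/\bigl(J_{(H_i)_{e_{i+1}}} + J_G^{s-1}\bigr) = 2(s-1) + a - 1,
\]
after which the shifted left term of the SES has regularity $2s + a - 1 = \reg M_3$, and since $\reg M_1 \neq \reg M_3 + 1$, Lemma~\ref{Lemma1.Reg}(b) yields $\reg M_2 = 2s + a - 1$.

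The main obstacle is proving this sub-claim. The outer inductive hypothesis controls $\reg S/((f_1,\ldots,f_j) + J_G^{s-1})$ only when the added piece is a $d$-sequence prefix, whereas $J_{(H_i)_{e_{i+1}}}$ introduces additional edge-binomials coming from the cliquification of $H_i$ at the two endpoints of $e_{i+1}$. To resolve this I would exploit the specific $d$-sequence ordering from \cite[Theorem~2.1]{AN}, which is constructed so that the cliquified neighborhoods at each stage remain inside a small, well-understood substar of $H$; one or two further short exact sequences peel off these extra binomials and reduce the computation to a case governed by the outer inductive hypothesis. The inner base case at $i = n-1$ must be handled separately precisely because $e_n$ is not a bridge in $G$ and Remark~\ref{Rem1.MCI}(b), with its monomial path generators, applies instead of \ref{Rem1.MCI}(a).
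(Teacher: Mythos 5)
Your overall architecture --- outer induction on $s$, inner descending induction on $i$, the short exact sequence with the multiplication-by-$f_{i+1}$ map, the collapse $((f_1,\ldots,f_i)+J_G^s):f_{i+1}=((f_1,\ldots,f_i):f_{i+1})+J_G^{s-1}$ via Lemma~\ref{Lemma1.PD}, and the separate treatment of $i=n-1$ --- matches the paper's proof exactly. But you have correctly located the crux and then not proved it. The sub-claim $\reg S/\bigl(J_{(H_i)_{e_{i+1}}}+J_G^{s-1}\bigr)=2(s-1)+\sum_j s_{(j)}-1$ is where all the work lives, and ``one or two further short exact sequences peel off these extra binomials'' is not an argument: the extra generators $f_{kl}$ coming from the cliquification are \emph{not} part of the $d$-sequence, so none of the colon machinery (Lemma~\ref{Lemma1.PD}, Remark~\ref{Rem1.MCI}) controls ideals of the form $\bigl((f_1,\ldots,f_i)+J_G^{s-1}+(\text{some }f_{kl})\bigr):f_{k'l'}$, and a naive peeling has no inductive handle.

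The paper resolves this differently. Write the colon ideal as $I+J$ with $I=(f_1,\ldots,f_i)+J_G^{s-1}$ (governed by the outer induction) and $J=J_{K_t}$ the clique ideal on the relevant neighborhood of $k_0$. If $J\subseteq I$ one is done by induction; otherwise one uses the Mayer--Vietoris sequence $0\to S/(I\cap J)\to S/I\oplus S/J\to S/(I+J)\to 0$, and the essential computation is the explicit identification $I\cap J=J\cdot\langle x_{k_0},y_{k_0},J_{\mathcal P}\rangle$, where $\mathcal P$ is the induced subgraph on $V(G)\setminus\{k_0\}$ (proved as in Claim~4.1 of \cite[Theorem 4.1]{AN}). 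The regularity of this \emph{product} ideal, $2+\sum_j s_{(j)}$, comes from \cite[Theorem 3.1]{AN} (regularity of $J_{\mathcal P}J_{K_t}$ for paths glued to a complete graph at free vertices --- the reason the paper proves Theorem~\ref{Thm3.parityIJ} in the parity setting); Lemma~\ref{Lemma1.Reg}(c) then gives $\reg S/(I+J)=2(s-1)+\sum_j s_{(j)}-1$ and the induction closes. Without the intersection formula and the regularity of the product, your induction does not close, so this is a genuine gap rather than a stylistic difference. (Your direct computation of $\reg S/(J_H:f_n)=\sum_j s_{(j)}$ in the inner base case, via Remark~\ref{Rem1.MCI}(b) and gluing, is a legitimate alternative to the paper's indirect derivation from the $s=1$ sequence, though the ``accounting for the path monomials'' would also need to be written out.)
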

\begin{proof} 
We consider a sequence $f_1,\ldots,f_n$ of edge-binomials of $G$ as the same sequence as in the Theorem \ref{Thm2.U}(a).
The proof is by induction on $s$. For $s=1$, the assertion follows from  Lemma \ref{Lemma3.U1}(a). We can assume that the assertion holds for $s-1$. The statement for $s$ is proved by descending induction on $i$. For $i=n-1$, the statement is verified independently in Lemma \ref{Lemma4.UC1dn}. Assume that the assertion holds for $i+1$. To prove the statement for $s$ and $i$, consider the following short exact sequence
\begin{equation} \label{eq:U1}
\begin{split}
     0 \longrightarrow \frac{S}{(f_1,\ldots,f_{i})+J_{G}^{s}:f_{i+1}}(-2) & \longrightarrow \frac{S}{(f_1,\ldots,f_{i})+J_{G}^{s}} \\
      & \longrightarrow \frac{S}{(f_1,\ldots,f_{i+1})+J_{G}^{s}}   \longrightarrow  0. 
\end{split}
\end{equation}
By the induction hypothesis on $i$, it follows that $\reg {S}/{(f_1,\ldots,f_{i+1})+J_{G}^{s}} = 2s + \sum_{j=1}^m{s_{(j)}}-1$. From Lemma \ref{Lemma1.PD} it follows that $((f_1,\ldots,f_{i})+J_{G}^{s}):f_{i+1}=((f_1,\ldots,f_{i}):f_{i+1})+J_{G}^{s-1}$. From Remark \ref{Rem1.MCI}(a) one has $((f_1,\ldots,f_{i}):f_{i+1})+J_{G}^{s-1}= I+J$, where $I = (f_1,\ldots,f_{i})+J_{G}^{s-1}$ and $J = J_{K_n}$ for some $n<m$ ($J_{K_n} = ( f_{kl} \mid k,l \in N_{H_i}(\alpha(f_{i+1})) \text{ or } k,l \in N_{H_i}(\beta(f_{i+1}))$).
Consider the following two cases:

\textbf{Case 1:}
If  $((f_1,\ldots,f_{i}):f_{i+1})+J_{G}^{s-1} = I$. From the induction hypothesis on $s$, it follows that $\reg{S}/{I} = 2(s-1) + \sum_{j=1}^{m+1}{s_{(j)}}-1$. Applying Lemma \ref{Lemma1.Reg}(b) to the short exact sequence (\ref{eq:U1}) yields 
 $$\reg \frac{S}{(f_1,\ldots,f_{i})+J_{G}^{s}}= 2s + \sum_{j=1}^{m+1}{s_{(j)}}-1.$$
 
\textbf{Case 2:}
If $((f_1,\ldots,f_{i}):f_{i+1})+J_{G}^{s-1} =I + J$. Consider the short exact sequence 
  \begin{equation} \label{eq:IJ}
    0 \longrightarrow \frac{S}{I \cap J} \longrightarrow \frac{S}{I} \oplus \frac{S}{J}
  \longrightarrow \frac{S}{I+J}   \longrightarrow  0.
 \end{equation}
Since $J$ is a complete graph, from Remark \ref{Rem.FF} it follows that $\reg {S}/{J} = 1$. We claim that $I \cap J = J \cdot( x_{k_{0}},y_{k_{0}},J_{\mathcal{P}} )$, where $\mathcal{P}$ is the induced subgraph of $G$ with vertex set $G\setminus \{k_0\}$ and $k_0$ denotes the center of $H$.

The proof of the claim is similar to the proof of Claim 4.1 of \cite[Theorem 4.1]{AN}. From Theorem \cite[Theorem 3.1]{AN} it follows that $\reg S/(I\cap J) = 2+ \sum_{j=1}^{m}{s_{(j)}}$. Then applying Lemma \ref{Lemma1.Reg}(c) to the short exact sequence (\ref{eq:IJ}) yields $\reg{{S}/{(I+J)}} = 2(s-1)+ \sum_{j=1}^{m}{s_{(j)}}-1$, for all $s\geq 2$. Again applying  Lemma \ref{Lemma1.Reg}(b) to exact sequence (\ref{eq:U1}) yields  $$\reg{\frac{S}{(f_1,\ldots,f_{i})+J_{G}^{s}}} = 2s + \sum_{j=1}^{m}{s_{(j)}}-1, \text{ for all } s \geq 2.$$ 
\end{proof}

\begin{lemma} \label{Lemma4.UC1dn}
With the hypothesis as in Theorem \ref{Thm4.UC1}, and for all $s>1$, we have
$$\reg \frac{S}{(f_1,\ldots,f_{n-1})+ f_{n}^{s}} = 2s + \sum_{j=1}^{m}{s_{(j)}}-1.$$
\end{lemma}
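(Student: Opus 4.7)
I plan to prove the lemma by induction on $s \ge 2$. Put $J_H := (f_1, \ldots, f_{n-1})$, the binomial edge ideal of the tree $H = G \setminus e_1 \in \mathcal{T}_m$, where $e_1 = \{k_0, v\}$ is the edge associated with $f_n$ and $v$ is its pendant endpoint in $H$. The sequence $f_1, \ldots, f_n$ is a $d$-sequence by Theorem \ref{Thm2.U}(a), so Lemmas \ref{Lemma1.RD} and \ref{Lemma1.PD} combined give $J_H : f_n^{s-1} = J_H : f_n$ for every $s \ge 2$.

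The driving short exact sequence is
\begin{equation*}
0 \longrightarrow \frac{S}{(J_H + f_n^s) : f_n^{s-1}}(-2(s-1)) \xrightarrow{\;\cdot f_n^{s-1}\;} \frac{S}{J_H + f_n^s} \longrightarrow \frac{S}{J_H + f_n^{s-1}} \longrightarrow 0.
\end{equation*}
A direct check shows $(J_H + f_n^s) : f_n^{s-1} = (J_H : f_n^{s-1}) + (f_n) = (J_H : f_n) + (f_n)$, which is independent of $s$. By the inductive hypothesis (and Lemma \ref{Lemma3.U1}(a) at $s - 1 = 1$ to start the induction), the right term has regularity $2(s-1) + \sum_{j=1}^m s_{(j)} - 1$. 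Granting the key identity $\reg S/((J_H : f_n) + (f_n)) = 1 + \sum_{j=1}^m s_{(j)}$, the shifted left term has regularity $2s - 1 + \sum_{j=1}^m s_{(j)}$. Since these differ by exactly $2$, the equality clause of Lemma \ref{Lemma1.Reg}(b) yields $\reg S/(J_H + f_n^s) = 2s - 1 + \sum_{j=1}^m s_{(j)} = 2s + \sum_{j=1}^m s_{(j)} - 1$, as desired.

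The proof therefore reduces to the key identity $\reg S/((J_H : f_n) + (f_n)) = 1 + \sum_{j=1}^m s_{(j)}$. By Remark \ref{Rem1.MCI}(b), $J_H : f_n = J_{H_{e_1}} + I_P$, where $H_{e_1}$ adjoins the clique $K_m$ on the $k_0$-neighbors $\{p_{1_1}, \ldots, p_{m_1}\}$ (since the pendant $v$ has only one $H$-neighbor, no extra edges appear there) and $I_P$ is generated by the path-monomials $g_{P,t}$ for $0 \le t \le s(k) - 1$ along the unique $k_0$-to-$v$ path $P$ in $H$. Thus $(J_H : f_n) + (f_n) = J_{G'} + I_P$, where $G' := H_{e_1} \cup \{e_1\}$ is obtained from $H_{e_1}$ by adjoining $e_1$.

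The principal obstacle is to show that adjoining $I_P$ does not raise the regularity past $1 + \sum_{j=1}^m s_{(j)}$. I plan to peel off the monomials $g_{P,t}$ one at a time using the short exact sequences
\begin{equation*}
0 \to \frac{S}{L_{t-1} : g_{P,t}}(-\deg g_{P,t}) \to \frac{S}{L_{t-1}} \to \frac{S}{L_t} \to 0, \quad L_t := J_{G'} + (g_{P,0}, \ldots, g_{P,t}),
\end{equation*}
and to observe that each colon ideal $L_{t-1} : g_{P,t}$ brings in the variables $x_{p_{k_t}}, y_{p_{k_t}}$ of a single interior vertex of $P$, effectively deleting that vertex and cutting the cycle in $G'$ down to a flower-free block graph whose regularity is computable via Remark \ref{Rem1.GUG} and Remark \ref{Rem.FF}. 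An inner induction on $s(k)$, with transparent base case $s(k) = 2$ where $I_P = (x_{p_{k_1}}, y_{p_{k_1}})$ directly collapses $G'$, then yields the key identity. This peeling calculation is the delicate part of the argument, as it mixes monomial and binomial generators and closely parallels the intersection computation in Claim 4.1 of \cite[Theorem~4.1]{AN}.
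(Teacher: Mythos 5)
Your outer skeleton is sound and genuinely different from the paper's argument. The paper avoids induction on $s$ altogether: it first extracts $\reg S/((f_1,\ldots,f_{n-1}):f_n)=\sum_{j=1}^{m}s_{(j)}$ by applying Lemma \ref{Lemma1.Reg}(a) to $0\to S/((f_1,\ldots,f_{n-1}):f_n)(-2)\to S/(f_1,\ldots,f_{n-1})\to S/J_G\to 0$, where both outer regularities are already known from Remark \ref{rem1.Reg2} and Lemma \ref{Lemma3.U1}(a); it then uses $(f_1,\ldots,f_{n-1}):f_n^{s}=(f_1,\ldots,f_{n-1}):f_n$ and a single application of Lemma \ref{Lemma1.Reg}(c) to $0\to S/((f_1,\ldots,f_{n-1}):f_n^{s})(-2s)\to S/(f_1,\ldots,f_{n-1})\to S/((f_1,\ldots,f_{n-1})+f_n^{s})\to 0$. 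Your identification $(J_H+f_n^{s}):f_n^{s-1}=(J_H:f_n)+(f_n)$ is correct, and your numerics in Lemma \ref{Lemma1.Reg}(b) check out, so the induction on $s$ would close \emph{if} your key identity were established.

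That key identity, $\reg S/((J_H:f_n)+(f_n))=1+\sum_{j=1}^{m}s_{(j)}$, is where the genuine gap lies: you only announce a plan for it, and the plan's central claim --- that each colon $L_{t-1}:g_{P,t}$ of the mixed binomial-plus-monomial ideal by a path monomial simply adjoins the two variables of one interior path vertex, collapsing $G'$ to a flower-free block graph --- is neither proved nor evident. Colon ideals of binomial edge ideals by monomials do not behave this transparently in general, and an inner induction resting on that unverified step cannot be accepted as written. The identity itself is true, but the peeling route is the hard (and unnecessary) way to reach it: since $(J_H:f_n):f_n=J_H:f_n^{2}=J_H:f_n$ by Lemma \ref{Lemma1.RD}(a), the multiplication-by-$f_n$ sequence
\[
0 \longrightarrow \frac{S}{J_H:f_n}(-2) \longrightarrow \frac{S}{J_H:f_n} \longrightarrow \frac{S}{(J_H:f_n)+(f_n)} \longrightarrow 0
\]
combined with $\reg S/(J_H:f_n)=\sum_{j=1}^{m}s_{(j)}$ (obtained from the $s=1$ data exactly as the paper does) yields the key identity in one application of Lemma \ref{Lemma1.Reg}(c). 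With that substitution your argument is complete; as written, it is not.
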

\begin{proof}
From Lemma \ref{Lemma3.U1}(a), it follows that $\reg {S}/{J_{G}} = 2 + \sum_{j=1}^{m}s_{(j)}-1$. By using Theorem \ref{Thm2.U}(a) and Remark \ref{rem1.Reg2}, we obtain that $\reg {S}/{(f_1,\ldots,f_{n-1})} = 2 + \sum_{j=1}^{m}s_{(j)}$. Therefore, applying Lemma \ref{Lemma1.Reg}(a) to the short exact sequence 
\begin{equation} \label{eq:E1}
    0 \longrightarrow \frac{S}{(f_1,\ldots,f_{n-1}):f_n}(-2) \longrightarrow \frac{S}{(f_1,\ldots,f_{n-1})} 
  \longrightarrow \frac{S}{(f_1,\ldots,f_{n})}   \longrightarrow  0,
\end{equation}
yields  $$\reg \frac{S}{(f_1,\ldots,f_{n-1}):f_{n}} = \sum_{j=1}^{m}s_{(j)}.$$ 
Since $f_1,\ldots,f_{n}$ form a $d$-sequence, one has $(f_1,\ldots,f_{n-1}): f_{n}^{s} = (f_1,\ldots,f_{n-1}): f_{n}$. Now, look at the following exact sequence 
\begin{equation} \label{eq:E4}
\begin{split}
    0 \longrightarrow \frac{S}{(f_1,\ldots,f_{n-1}):f_{n}^{s}}(-2s) & \longrightarrow \frac{S}{(f_1,\ldots,f_{n-1})} \\
 & \longrightarrow \frac{S}{(f_1,\ldots,f_{n-1})+f_{n}^{s}}   \longrightarrow  0. 
\end{split}
\end{equation}
Again, applying Lemma \ref{Lemma1.Reg}(c) to the short exact sequence (\ref{eq:E4}) one can obtain the desired result.
\end{proof}

\begin{remark}
If $m=2$ in Theorem \ref{Thm4.UC1} then $G$ is a balloon graph.  Therefore, we obtained \cite[Remark 3.15]{JAR20} that $\reg S/J_G^s = 2s + n-4$, for all $s \geq 1$.   
\end{remark}

\begin{Theorem} \label{Thm4.U2}
Let $H \in \mathcal{T}_{m}$ be a tree. Let $G$ be a unicyclic graph obtained by adding an edge between the center of $H$ and an internal vertex of $H$. Let $f_1,\ldots f_n$ be a $d$-sequence edge-binomials of $G$ with $f_0 =  0 \in S$. Then, for any $i=0,1,\ldots,n-1$, we have
$$\reg \frac{S}{(f_1,\ldots,f_{i})+ J_{G}^{s}} = 2s + \sum_{j=1}^{m}{s_{(j)}}-1,  \text{ for all } s\geq 2.$$
In particular, $\reg {S}/{J_{G}^s} = 2s+ \sum_{j=1}^{m}{s_{(j)}}-1$, for all $s\geq 2$.
\end{Theorem}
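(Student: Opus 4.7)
The plan is to mirror the double-induction architecture used for Theorem \ref{Thm4.UC1}, adjusted for the new geometry: the added edge $e_2=\{u,v\}$ joins the centre $u$ (of degree $m+1$) to an internal vertex $v$ (of degree $3$) of $H$, rather than joining the centre to a pendant vertex. I fix the $d$-sequence $f_1,\ldots,f_n$ constructed in Theorem \ref{Thm2.U}(c), in which the first $n-1$ binomials realise the $d$-sequence of $H\in\mathcal{T}_m$ from \cite[Theorem 2.1]{AN} and $f_n=f_{e_2}$. The outer induction runs on $s\geq 2$; for each fixed $s$, the inner induction descends on $i$ from $n-1$ down to $0$.

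For the inductive step at a pair $(s,i)$ with $s\geq 3$ and $i<n-1$, I invoke the short exact sequence (\ref{eq:U1}) and use Lemma \ref{Lemma1.PD} to rewrite the colon ideal as $((f_1,\ldots,f_i):f_{i+1})+J_G^{s-1}$. Remark \ref{Rem1.MCI}(a), together with the $d$-sequence property, then identifies this with $I$ or $I + J_{K_\ell}$, where $I = (f_1,\ldots,f_i) + J_G^{s-1}$ and $J_{K_\ell}$ is the complete-graph binomial edge ideal on the neighbourhood in $H_i$ of one endpoint of $f_{i+1}$. This reproduces the two-case split used in Theorem \ref{Thm4.UC1}: Case 1 is immediate from the outer induction hypothesis applied to $I$ together with Lemma \ref{Lemma1.Reg}(b); Case 2 is treated via the intersection identity $I\cap J_{K_\ell} = J_{K_\ell}\cdot\langle x_{k_0},y_{k_0},J_{\mathcal{P}}\rangle$, where $\mathcal{P}$ is the induced subgraph of $G$ on $V(G)\setminus\{k_0\}$, together with the product-regularity formula \cite[Theorem 3.1]{AN} and the exactness parts of Lemma \ref{Lemma1.Reg}.

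Two dedicated base-case lemmas drive the induction. The inner base case at $i=n-1$ is the direct analogue of Lemma \ref{Lemma4.UC1dn}: I would compute $\reg S/((f_1,\ldots,f_{n-1})+f_n^s)$ by combining the sequences (\ref{eq:E1}) and (\ref{eq:E4}), exploiting the $d$-sequence consequence $(f_1,\ldots,f_{n-1}):f_n^s = (f_1,\ldots,f_{n-1}):f_n$. The structural novelty here is that $f_n$ is the cycle-closing binomial rather than a bridge, so the colon $J_H:f_n$ must be unfolded via Remark \ref{Rem1.MCI}(b) instead of (a); the resulting ideal contains both $J_{H_{f_n}}$ and the monomial generators $g_{P,t}$ attached to the unique $H$-path $P$ from $u$ to $v$, whose length equals $\mathrm{girth}(G)-1$. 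I would extract its regularity by combining Remark \ref{Rem1.GUG}, Remark \ref{Rem.FF} and Remark \ref{rem1.Reg2}, then feed it back into (\ref{eq:E1}) and (\ref{eq:E4}) via Lemma \ref{Lemma1.Reg}.

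The outer base case $s=2$ is the second crucial ingredient and is where the principal obstacle sits. Unlike in Theorem \ref{Thm4.UC1}, Lemma \ref{Lemma3.U1}(b) does not match the conjectured formula at $s=1$ when $\mathrm{girth}(G)\geq 4$ — the actual value $\sum s_{(j)}$ is one short of $2\cdot 1+\sum s_{(j)}-1$ — which is precisely why the statement is restricted to $s\geq 2$. To establish $\reg S/J_G^2 = 3+\sum s_{(j)}$ uniformly, I would run the descending induction once at $s=2$ with $\reg S/J_G$ from Lemma \ref{Lemma3.U1}(b) as the seed, verifying that the $+2$ degree shift in (\ref{eq:U1}) combined with the Case 2 contribution from the new $J_{K_\ell}$ component is what absorbs the girth-$\geq 4$ deficit. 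The delicate point — and what I expect to be the main obstacle — is ensuring that the equality clause of Lemma \ref{Lemma1.Reg}(b), not merely the inequality, is genuinely realised at the step where the regularity jumps; this requires confirming that the regularities of the two middle terms in (\ref{eq:U1}) differ at $s=2$, which is where the girth-dependent colon structure and the analysis of $f_n$ from the inner base case interact most delicately.
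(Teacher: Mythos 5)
Your overall architecture coincides with the paper's: outer induction on $s$ starting at $s=2$, inner descending induction on $i$, the exact sequences (\ref{eq:U1}), (\ref{eq:E1}), (\ref{eq:E4}) and (\ref{eq:IJ}), Lemma \ref{Lemma1.PD}, the two-case split with $I$ and $J=J_{K_\ell}$, and the intersection claim $I\cap J = J*\langle x_{k_0},y_{k_0},J_{\mathcal{P}}\rangle$ resolved by \cite[Theorem 3.1]{AN}. Your handling of the $s=2$ base case (running the descending induction once with Lemma \ref{Lemma3.U1}(b) as seed and letting the $(-2)$ twist absorb the deficit) is exactly what the paper's Lemma \ref{Lemma4.U2s=2} does, and your worry about the equality clause of Lemma \ref{Lemma1.Reg}(b) is resolved there because the twisted colon term and the $(i+1)$-term both have regularity $3+\sum_{j} s_{(j)}$, so the hypothesis $\reg M_1 \neq \reg M_3 +1$ holds.

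The genuine gap is in your inner base case $i=n-1$. You propose to unfold $(f_1,\ldots,f_{n-1}):f_n = J_H:f_{e_2}$ explicitly via Remark \ref{Rem1.MCI}(b) and then extract its regularity from Remarks \ref{Rem1.GUG}, \ref{Rem.FF} and \ref{rem1.Reg2}. But Remark \ref{Rem1.MCI}(b) produces $J_{H_{e_2}}$ \emph{plus} the monomial generators $g_{P,t}$ attached to the $u$--$v$ path in $H$; the resulting ideal is not the binomial edge ideal of any graph, and all three regularity remarks you invoke apply only to binomial edge ideals (of glued graphs, flower-free block graphs, and $\mathcal{T}_m$-trees respectively). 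As written, that step does not go through, and making it work would require a separate analysis of the mixed binomial-plus-monomial ideal. The paper (Lemmas \ref{Lemma4.U2s=2} and \ref{Lemma4.U2p2}) sidesteps the explicit colon entirely: since $\reg S/J_G$ is known from Lemma \ref{Lemma3.U1}(b) and $\reg S/(f_1,\ldots,f_{n-1}) = 2+\sum_j s_{(j)}$ from Remark \ref{rem1.Reg2}, and these two values differ, the equality clause of Lemma \ref{Lemma1.Reg}(a) applied to (\ref{eq:E1}) forces $\reg S/\bigl((f_1,\ldots,f_{n-1}):f_n\bigr) = \sum_j s_{(j)}$ without ever writing the colon down; this value is then fed into (\ref{eq:E4}) together with $(f_1,\ldots,f_{n-1}):f_n^s = (f_1,\ldots,f_{n-1}):f_n$. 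Replacing your explicit computation by this indirect extraction repairs the argument.
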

Take $f_1,\ldots,f_n$ to be a sequence of edge-binomials in the same order as in Theorem \ref{Thm2.U}(c). To complete the proof of Theorem \ref{Thm4.U2}, we need to prove the following lemmas.

\begin{lemma} \label{Lemma4.U2s=2}
With the hypothesis as in Theorem \ref{Thm4.U2}, and for any $i=0,1,\ldots,n-1$, we have
$$\reg \frac{S}{(f_1,\ldots,f_i)+J_{G}^2} = 4+ \sum_{j=1}^{m}{s_{(j)}}-1.$$
\end{lemma}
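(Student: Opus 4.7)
The approach will mirror the descending induction used in the proof of Theorem~\ref{Thm4.UC1}, applied to the $d$-sequence $f_1,\ldots,f_n$ chosen exactly as in the proof of Theorem~\ref{Thm2.U}(c): the first $n-1$ entries are the edge-binomials of $H$ in the order of \cite[Theorem~2.1]{AN}, and $f_n$ is the binomial of the cycle-creating edge $e_2 = \{k_0, v\}$, where $v$ is an internal vertex of $H$. I would induct downwards on $i$ from $i = n-1$ to $i = 0$, handling the base case separately.

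For the base case $i = n-1$, apply the short exact sequence
\begin{equation*}
0 \longrightarrow \frac{S}{(f_1,\ldots,f_{n-1}) : f_n^2}(-4) \longrightarrow \frac{S}{J_H} \longrightarrow \frac{S}{J_H + f_n^2} \longrightarrow 0.
\end{equation*}
By Lemma~\ref{Lemma1.RD}(a), the $d$-sequence property collapses the colon to $J_H : f_n$, which by Remark~\ref{Rem1.MCI}(b) equals $J_{H_{e_2}} + (g_{P,\ell})$ along the unique path $P$ from $k_0$ to $v$ in $H$. Using Remark~\ref{Rem.FF} on the block-graph piece $J_{H_{e_2}}$ (which contains the clique $K_m$ on the neighbors of $k_0$), together with a short layered analysis that absorbs the path monomials $g_{P,\ell}$, one should extract $\reg S/(J_H : f_n) = \sum_{j=1}^{m} s_{(j)}$. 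Combined with $\reg S/J_H = 1 + \sum s_{(j)}$ from Remark~\ref{rem1.Reg2}, the gap between the two shifted regularities ($4+\sum s_{(j)}$ versus $1+\sum s_{(j)}$) puts us in the equality case of Lemma~\ref{Lemma1.Reg}(c), delivering $\reg S/(J_H + f_n^2) = 3 + \sum s_{(j)}$.

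For the inductive step, assuming the statement for $i+1$, use
\begin{equation*}
0 \longrightarrow \frac{S}{\bigl((f_1,\ldots,f_i) + J_G^2\bigr) : f_{i+1}}(-2) \longrightarrow \frac{S}{(f_1,\ldots,f_i) + J_G^2} \longrightarrow \frac{S}{(f_1,\ldots,f_{i+1}) + J_G^2} \longrightarrow 0.
\end{equation*}
By Lemma~\ref{Lemma1.PD} the colon equals $\bigl((f_1,\ldots,f_i) : f_{i+1}\bigr) + J_G$, and since $(f_1,\ldots,f_i)\subseteq J_G$ it splits into the same two subcases as in Theorem~\ref{Thm4.UC1}. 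In Case~1 ($(f_1,\ldots,f_i):f_{i+1} = (f_1,\ldots,f_i)$) the colon reduces to $J_G$, whose regularity is controlled by Lemma~\ref{Lemma3.U1}(b), and Lemma~\ref{Lemma1.Reg}(b) combined with the inductive hypothesis closes the case. In Case~2, Remark~\ref{Rem1.MCI}(a) adds an extra term $J_{K_\ell}$; setting $I = (f_1,\ldots,f_i) + J_G$ and $J = J_{K_\ell}$, I would analyze $0 \to S/(I \cap J) \to S/I \oplus S/J \to S/(I + J) \to 0$, establishing $I \cap J = J \cdot \langle x_{k_0}, y_{k_0}, J_{\mathcal P}\rangle$ (with $\mathcal P$ the subgraph induced on $V(G) \setminus \{k_0\}$) exactly as in Theorem~\ref{Thm4.UC1}; \cite[Theorem~3.1]{AN} then computes $\reg S/(I \cap J)$, and Lemma~\ref{Lemma1.Reg} finishes.

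The main obstacle is the base case: producing $\reg S/(J_H : f_n)$ in closed form requires dissecting the mixed monomial-binomial ideal $J_{H_{e_2}} + (g_{P,\ell})$, which takes qualitatively different forms in the girth-$3$ regime (where $P$ has a single intermediate vertex and the monomial part collapses to a maximal ideal in two variables) and the girth-$\geq 4$ regime (where $P$ has $\geq 2$ intermediate vertices and the $g_{P,\ell}$ are genuinely mixed). A secondary subtlety in Case~2 of the inductive step is re-verifying the decomposition $I \cap J = J \cdot \langle x_{k_0}, y_{k_0}, J_{\mathcal P}\rangle$ for the present family, since the cycle vertex $v$ is internal (rather than pendant) in $H$, slightly altering the local neighborhood structure compared to Theorem~\ref{Thm4.UC1}.
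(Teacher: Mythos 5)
Your inductive step coincides with the paper's: the same reduction of the colon via Lemma \ref{Lemma1.PD}, the same Case 1/Case 2 split, and the same intersection $I\cap J = J*\langle x_{k_0},y_{k_0},J_{\mathcal{P}}\rangle$ handled exactly as in Theorem \ref{Thm4.UC1}. The divergence --- and the gap --- is in the base case. You propose to compute $\reg S/\bigl((f_1,\ldots,f_{n-1}):f_n\bigr)$ head-on by expanding the colon via Remark \ref{Rem1.MCI}(b) into $J_{H_{e_2}}$ plus the path monomials $g_{P,t}$ (correctly noting that $e_2$ is not a bridge, so part (a) does not apply), and you then concede that extracting the value $\sum_{j}s_{(j)}$ from this mixed monomial--binomial ideal is the ``main obstacle,'' offering only that ``a short layered analysis'' should do it. As written, this is the single quantitative input on which the whole lemma rests, and it is asserted rather than proved; Remark \ref{Rem.FF} applies to binomial edge ideals of block graphs, not to ideals with monomial generators adjoined, so nothing you cite lets you quote the number directly. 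The base case is therefore genuinely incomplete.

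The paper's own proof shows the obstacle is avoidable: it never opens up the colon ideal at all. Since $\reg S/(f_1,\ldots,f_{n-1}) = 2+\sum_j s_{(j)}$ by Remark \ref{rem1.Reg2} (your value $1+\sum_j s_{(j)}$ is off by one, since $i(H)=1+\sum_j s_{(j)}$, though this slip happens not to change your final maximum) and $\reg S/J_G$ is already known from Lemma \ref{Lemma3.U1}(b), applying Lemma \ref{Lemma1.Reg}(a) to the $s=1$ exact sequence (\ref{eq:E1}) forces $\reg S/\bigl((f_1,\ldots,f_{n-1}):f_n\bigr) = \sum_j s_{(j)}$ in both girth regimes, with no analysis of $J_{H_{e_2}}+(g_{P,t})$ and no case distinction on the length of $P$. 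Feeding this into your base-case sequence then yields $3+\sum_j s_{(j)}$ exactly as you state. So your architecture is right, but you should replace the sketched direct dissection by this indirect determination of the colon's regularity (or else actually carry out the layered analysis you allude to).
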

\begin{proof}
From Lemma \ref{Lemma3.U1}(b) one can obtain the regularity of the module $S/(f_1,\ldots,f_{n})$. By Theorem \ref{Thm2.U}(c) the binomial edge ideal $( f_1,\ldots,f_{n-1}) $ is associated to a graph in  $\mathcal{T}_{m}$. Thus, by Remark \ref{rem1.Reg2} it follows that $\reg S/(f_1,\ldots,f_{n-1}) = 2+ \sum_{j=1}^{m}{s_{(j)}}$. Therefore, by applying Lemma \ref{Lemma1.Reg}(a) to the short exact sequence (\ref{eq:E1}) yields
$$\reg \frac{S}{(f_1,\ldots,f_{n-1}):f_n} = \sum_{j=1}^{m}{s_{(j)}}. $$
Now, the proof is by descending induction on $i$.
For $i=n-1$, consider the short exact sequence (\ref{eq:E4}) with $s=2$. Since the sequence $f_1,\ldots,f_{n}$ is a $d$-sequence, one has 
 $$\reg \frac{S}{(f_1,\ldots,f_{n-1}):f_{n}^{2}}(-4) = 4+ \sum_{j=1}^{m}{s_{(j)}}-1 > \reg \frac{S}{(f_1,\ldots,f_{n-1})}.$$
Applying Lemma \ref{Lemma1.Reg}(c) to the short exact sequence (\ref{eq:E4}) for $s=2$ yields that 
\[
\reg \frac{S}{(f_1,\ldots,f_{n-1})+J_{G}^{2}} = \reg \frac{S}{(f_1,\ldots,f_{n-1})+f_{n}^{2}} =4+ \sum_{j=1}^{m}{s_{(j)}}-1.
\]
Next, assume that the assertion holds for $i+1$. Consider the short exact sequence (\ref{eq:U1}) for $s=2$. From Lemma \ref{Lemma1.PD}, it follows that $((f_1,\ldots,f_{i})+J_{G}^{2}):f_{i+1} = ((f_1,\ldots,f_{i}):f_{i+1})+J_{G}$. Following the same approach as in proof of Theorem \ref{Thm4.U2} (Case $2$), we get $\reg {S}/{((f_1,\ldots,f_{i})+J_{G}^{2}):f_{i+1}} = 4+ \sum_{j=1}^{m}{s_{(j)}}-1$. By the induction hypothesis on $i$ that $\reg {S}/{(f_1,\ldots,f_{i+1})+J_{G}^{2}}= 4+ \sum_{j=1}^{m}{s_{(j)}}-1$. Applying Lemma \ref{Lemma1.Reg}(b) to exact sequence (\ref{eq:U1}) for $s=2$, yields 
\[
\frac{S}{(f_1,\ldots,f_{i})+J_{G}^{2}}=4+ \sum_{j=1}^{m}{s_{(j)}}-1,
\]
as desired.
\end{proof}

\begin{lemma} \label{Lemma4.U2p2}
With the hypothesis as in Theorem \ref{Thm4.U2}, and for all $s>2$, we have 
$$\reg \frac{S}{(f_1,\ldots,f_{n-1})+ f_{n}^{s}} = 2s + \sum_{j=1}^{m}{s_{(j)}}-1.$$
\end{lemma}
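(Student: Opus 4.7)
The plan is to mirror the strategy of Lemma \ref{Lemma4.UC1dn} verbatim, adjusting only the numerical values. Since $f_n$ is the edge completing the unique cycle of $G$, the ideal $(f_1,\ldots,f_{n-1})$ coincides with $J_H$ for the tree $H \in \mathcal{T}_m$ of Notation \ref{defPK}, so two regularities already computed inside the proof of Lemma \ref{Lemma4.U2s=2} can be reused:
\[
\reg \frac{S}{(f_1,\ldots,f_{n-1})} \;=\; 2 + \sum_{j=1}^m s_{(j)}
\qquad\text{and}\qquad
\reg \frac{S}{(f_1,\ldots,f_{n-1}):f_n} \;=\; \sum_{j=1}^m s_{(j)}.
\]

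Next, I would invoke the $d$-sequence property of $f_1,\ldots,f_n$ guaranteed by Theorem \ref{Thm2.U}(c), or equivalently Lemma \ref{Lemma1.RD}(a) applied to $J_H$, which yields the colon stability
\[
(f_1,\ldots,f_{n-1}) : f_n^s \;=\; (f_1,\ldots,f_{n-1}) : f_n \qquad \text{for every } s\geq 1,
\]
so that the regularity of the left-hand colon is independent of the exponent~$s$.

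With these ingredients in place, the final step is to consider the multiplication-by-$f_n^s$ short exact sequence
\[
0 \longrightarrow \frac{S}{(f_1,\ldots,f_{n-1}):f_n^s}(-2s) \longrightarrow \frac{S}{(f_1,\ldots,f_{n-1})} \longrightarrow \frac{S}{(f_1,\ldots,f_{n-1})+f_n^s} \longrightarrow 0.
\]
The leftmost module has regularity $2s + \sum_{j=1}^m s_{(j)}$, obtained by adding the shift $-2s$ to the colon regularity, while the middle module has regularity $2 + \sum_{j=1}^m s_{(j)}$. For $s > 2$ the left strictly exceeds the middle, so the equality clause of Lemma \ref{Lemma1.Reg}(c) delivers
\[
\reg \frac{S}{(f_1,\ldots,f_{n-1})+f_n^s} \;=\; 2s + \sum_{j=1}^m s_{(j)} - 1,
\]
which is the claim. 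The argument presents no real obstacle: all nontrivial regularities were already tabulated inside Lemma \ref{Lemma4.U2s=2}, and the only moving parts are the colon stability from the $d$-sequence hypothesis and the numerical inequality that activates the equality clause of the regularity lemma.
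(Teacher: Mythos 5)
Your proposal is correct and matches the paper's own argument essentially verbatim: both rest on the colon stability $(f_1,\ldots,f_{n-1}):f_n^s=(f_1,\ldots,f_{n-1}):f_n$ from the $d$-sequence property, reuse the two regularities already computed in the proof of Lemma~\ref{Lemma4.U2s=2}, and apply Lemma~\ref{Lemma1.Reg}(c) to the short exact sequence~(\ref{eq:E4}). (Your final value $2s+\sum_{j=1}^m s_{(j)}-1$ is the correct one; the paper's displayed conclusion contains an apparent typo.)
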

\begin{proof}
Since $f_1,\ldots,f_{n}$ form a $d$-sequence, one has $(f_1,\ldots,f_{n-1}): f_{n}^{s} = (f_1,\ldots,f_{n-1}): f_{n}$. From Lemma \ref{Lemma4.U2s=2}, we know the regularity of modules ${S}/{(f_1,\ldots,f_{n-1}):f_n}$ and ${S}/{(f_1,\ldots,f_{n-1})}$. Then applying Lemma \ref{Lemma1.Reg}(c) to the short exact sequence (\ref{eq:E4}) one yields   $\reg {S}/{((f_1,\ldots,f_{n-1})+f_{n}^{s})} = 4+ \sum_{j=1}^{m}{s_{(j)}}-1$.
\end{proof}

\begin{proof}[Proof of Theorem~\ref{Thm4.U2}] 
We use induction on $s$ to prove the statement. For $s=2$, the statement follows from  Lemma \ref{Lemma4.U2s=2}. Assume that the assertion holds for $s-1$. The statement for $s$ is proved by descending induction on $i$. For $i=n-1$, the statement is verified separately in Lemma \ref{Lemma4.U2p2}. Next, assume that the assertion holds for $i+1$. Consider the short exact sequence (\ref{eq:U1}).
By the induction hypothesis on $i$ it follows that ${S}/{((f_1,\ldots,f_{i+1})+J_{G}^{s})} = 2s +\sum_{j=1}^{m}{s_{(j)}}-1$. By the similar argument as in Theorem \ref{Thm4.UC1} we get $((f_1,\ldots,f_{i}):f_{i+1})+J_{G}^{s-1} =(f_1,\ldots,f_{i})+J_{G}^{s-1}+J_{K_{n}}$, for some $n < m$. Set $I= (f_1,\ldots,f_{i})+J_{G}^{s-1}$ and $J=J_{K_{n}}$. Consider the following two cases: 

\textbf{Case 1:}
If  $((f_1,\ldots,f_{i}):f_{i+1})+J_{G}^{s-1} = I$, then from induction hypothesis it follows that $\reg {S}/{I} = 2s +\sum_{j=1}^{m}{s_{(j)}}-1$. Now, from Lemma \ref{Lemma1.Reg}(b) it follows that $$\reg \frac{S}{(f_1,\ldots,f_{i})+J_{G}^{s}}= 2s +\sum_{j=1}^{m}{s_{(j)}}-1.$$ 

\textbf{Case 2:}
If $((f_1,\ldots,f_{i}):f_{i+1})+J_{G}^{s-1} =I + J$. Consider the  short exact sequence (\ref{eq:IJ}). The regularity of $I$ is known from the induction hypothesis. From Remark \ref{Rem.FF} it follows that $\reg {S}/{J} = 1$, since $J$ is a complete graph. We claim that $I \cap J = J \cdot( x_{k_{0}},y_{k_{0}},J_{\mathcal{P}} )$, where $\mathcal{P}$ denotes the induced subgraph of $G$ with vertex set $G\setminus\{k_0\}$ and $k_0$ is the center of $G$. 

The proof of the claim is similar to the proof of Claim 4.1 of \cite[Theorem 4.1]{AN}.
It follows from Theorem \cite[Theorem 3.1]{AN} that $\reg S/(I \cap J) = 2 + \sum_{j=1}^{m}{s_{(j)}}$. By applying Lemma \ref{Lemma1.Reg}(c) to the short exact sequence (\ref{eq:IJ}) we  obtain that $\reg {S}/({I+J}) = 2(s-1) + \sum_{j=1}^{m}{s_{(j)}}-1$, for all $s\geq 3$, i.e. 
$$\reg{\frac{S}{(f_1,\ldots,f_{i})+J_{G}^{s}: f_{i+1}}} = 2(s-1) + \sum_{j=1}^{m}{s_{(j)}}-1,$$
for all $s\geq 3$. Thus by applying Lemma \ref{Lemma1.Reg}(b) to the short exact sequence (\ref{eq:U1}) one can yields  $\reg {S}/{((f_1,\ldots,f_{i})+J_{G}^{s})} = 2s + \sum_{j=1}^{m}{s_{(j)}}-1$, for $s \geq 3$. 
\end{proof}

\begin{remark}
Suppose $G$ in Theorems \ref{Thm4.UC1} and \ref{Thm4.U2} is isomorphic to a connected graph on $[n]$, which is obtained by adding an edge between two vertices of a path $P_n$. If the girth of $G$ is at least $4$, then we obtain \cite[Theorem 4.4.]{SZ} that $\reg S/J_{G}^{s} = 2s + n - 4$ for all $s \geq 2$. 
\end{remark}

\section{Regularity of powers of parity binomial edge ideals} \label{sec.PBEI}

In this section, we obtain the exact bound for the regularity of powers of the parity binomial edge ideal of $d$-sequence unicyclic graphs. As mentioned earlier, we consider only unicyclic graphs with odd girth. Kumar characterized graphs whose parity binomial edge ideals are complete intersections; namely, $\I_G$ is a complete intersection if and only if G is a path or odd cycle (cf. \cite{A2021reg}). First, we state the regularity of powers of parity binomial edge ideals of cycles. 

\begin{remark}
    Let $G$ be an odd cycle on $[n]$. Then one has $\reg S/\I_{G}^s = 2s + n-2$, for all $s\geq 1$.
\end{remark}
\begin{proof}
    The statement follows from \cite[Lemma 4.4]{BHT15}.
\end{proof}

Next, we compute the regularity of powers of parity binomial edge ideals of connected unicyclic graphs whose parity edge-binomials form a $d$-sequence.

\begin{Theorem} \label{Thm5.PPUC1}
Let $H \in \mathcal{T}_{m}$ be a tree. Let $G$ be a unicyclic graph obtained by adding an edge between the pendant vertex of $H$ and the center of $H$ such that the girth of $G$ is odd. Let $g_1,\ldots,g_n$ be a $d$-sequence parity edge-binomials of $G$ with $g_0 =  0 \in S$. Then, for any $i=0,1,\ldots,n-1$, for all $s\geq 1$, we have
$$\reg \frac{S}{(g_1,\ldots,g_i)+\mathcal{I}_{G}^s} = 2s + \sum_{j=1}^{m}{s_{(j)}}.$$
In particular, $\reg {S}/{\mathcal{I}_{G}^s} = 2s + \sum_{j=1}^{m}{s_{(j)}}$, for all $s\geq 1$.
\end{Theorem}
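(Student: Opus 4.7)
The plan is to transcribe the argument of Theorem~\ref{Thm4.UC1} to the parity setting, since every ingredient has a parity analog: Remark~\ref{Rem1.pcolon} replaces Remark~\ref{Rem1.MCI}(a)--(b), Lemma~\ref{Lemma1.RD}(b) replaces Lemma~\ref{Lemma1.RD}(a) (using char$(k) \neq 2$), and Remark~\ref{Rem1.phi} reduces the bipartite pieces to the binomial-edge regularity computations already done in Section~\ref{sec.reg}. I would fix the sequence $g_1,\ldots,g_n$ supplied by Theorem~\ref{Rem2.Parity2}, so that the graph $H_{n-1}$ associated with $g_1,\ldots,g_{n-1}$ is the bipartite tree $H\in\mathcal{T}_m$ and $g_n$ is the parity edge-binomial of the cycle-creating edge $e_n$. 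The outer induction runs on $s$: the base $s=1$ is Lemma~\ref{lemma5.PU1}, and for the inductive step I would fix $s\geq 2$ and descend on $i$.

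The inner base case $i=n-1$ deserves a separate lemma (parity analog of Lemma~\ref{Lemma4.UC1dn}). I would apply Lemma~\ref{Lemma1.Reg}(c) to
\[
0 \longrightarrow \frac{S}{(g_1,\ldots,g_{n-1}):g_n^{s}}(-2s) \longrightarrow \frac{S}{(g_1,\ldots,g_{n-1})} \longrightarrow \frac{S}{(g_1,\ldots,g_{n-1})+g_n^s} \longrightarrow 0,
\]
and use Lemma~\ref{Lemma1.RD}(b) plus the $d$-sequence hypothesis to collapse the colon $(g_1,\ldots,g_{n-1}):g_n^{s} = (g_1,\ldots,g_{n-1}):g_n = \I_H:g_{e_n}$. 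By Remark~\ref{Rem1.MCI}(c) this equals $\phi(J_{H_{e_n}})$, whose regularity matches $\reg S/J_{H_{e_n}}$, computed by the block-graph argument in the proof of Lemma~\ref{lemma5.PU1}. Combined with $\reg S/(g_1,\ldots,g_{n-1}) = \reg S/\I_H = \reg S/J_H$ (via Remarks~\ref{Rem1.phi} and~\ref{rem1.Reg2}), the regularity lemma yields the desired $2s + \sum_{j=1}^{m} s_{(j)}$.

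For the descending step on $i$, I would use the short exact sequence
\[
0 \longrightarrow \frac{S}{((g_1,\ldots,g_i)+\I_G^{s}):g_{i+1}}(-2) \longrightarrow \frac{S}{(g_1,\ldots,g_i)+\I_G^{s}} \longrightarrow \frac{S}{(g_1,\ldots,g_{i+1})+\I_G^{s}} \longrightarrow 0.
\]
Lemma~\ref{Lemma1.PD} rewrites the colon as $((g_1,\ldots,g_i):g_{i+1}) + \I_G^{s-1}$. Since $H_i$ is bipartite and $e_{i+1}$ is a bridge in $H_{i+1}$, Remark~\ref{Rem1.pcolon} shows $(g_1,\ldots,g_i):g_{i+1}$ equals either $(g_1,\ldots,g_i)$ or $(g_1,\ldots,g_i) + J$ with $J = J_{K_{n'}}$ an ordinary binomial edge ideal on the neighbors of an endpoint of $e_{i+1}$. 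Setting $I := (g_1,\ldots,g_i) + \I_G^{s-1}$, I would split into the two cases mirroring Theorem~\ref{Thm4.UC1}: Case~$1$ (colon $= I$) is immediate from the outer inductive hypothesis via Lemma~\ref{Lemma1.Reg}(b); Case~$2$ (colon $= I + J$) proceeds by proving the claim $I\cap J = J \ast \langle x_{k_0}, y_{k_0}, \I_{\mathcal{P}} \rangle$ along the lines of \cite[Claim 4.1]{AN}, invoking Theorem~\ref{Thm3.parityIJ} to obtain $\reg S/(I\cap J) = 2 + \sum_{j=1}^{m} s_{(j)}$, and concluding via the Mayer--Vietoris sequence with Lemmas~\ref{Lemma1.Reg}(c) and~(b).

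The hard part will be the inner base case $i = n-1$: I must identify the block structure of $H_{e_n}$ (a clique $K_{m+1}$ with paths attached, plus a triangle glued at the tail of the $k$-th branch) and verify via Remark~\ref{Rem1.GUG} and Remark~\ref{Rem.FF} that its regularity produces the exact value required to match $2s + \sum s_{(j)}$ after the degree-$2s$ shift. A secondary subtlety is carrying the $I \cap J$ decomposition through the parity setting: since $\phi$ is an $S$-algebra isomorphism carrying $J_H$ to $\I_H$ and preserves intersections, the argument of \cite[Claim 4.1]{AN} should transport without essential change, but this needs to be checked. Once these two computations are in place, the rest is bookkeeping that duplicates Theorem~\ref{Thm4.UC1} with every intermediate regularity shifted up by one.
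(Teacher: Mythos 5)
Your outline follows the paper's proof almost verbatim: same ordering of the $g_i$, same double induction (outer on $s$ via Lemma~\ref{lemma5.PU1}, inner descending on $i$ with $i=n-1$ as a separate lemma), same use of Lemma~\ref{Lemma1.PD}, Remark~\ref{Rem1.pcolon}, the claim $I\cap J = J\ast\langle x_{k_0},y_{k_0},\I_{\mathcal{P}}\rangle$, and Theorem~\ref{Thm3.parityIJ}. The base case $i=n-1$ and the $I\cap J$ transport that you flag as ``the hard part'' are handled in the paper exactly as you predict.

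There is, however, one genuine gap, and it sits precisely in the step you dismiss as ``bookkeeping that duplicates Theorem~\ref{Thm4.UC1} with every intermediate regularity shifted up by one.'' That shift by one is not harmless. In Case~2 of Theorem~\ref{Thm4.UC1} one has $\reg S/I = 2(s-1)+\sum s_{(j)}-1$ and $\reg S/(I\cap J)=2+\sum s_{(j)}$, which never coincide, so Lemma~\ref{Lemma1.Reg}(c) applied to the Mayer--Vietoris sequence gives equality for every $s\geq 2$. In the parity setting $\reg S/I = 2(s-1)+\sum s_{(j)}$, which \emph{equals} $\reg S/(I\cap J)$ exactly when $s=2$; the equality criterion of Lemma~\ref{Lemma1.Reg}(c) then fails and you only get an upper bound on $\reg S/(I+J)$. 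The paper closes this by a separate direct computation of $\reg S/(\I_G+J_{K_n})$: it uses $\phi$ to identify $\I_G+J_{K_n}$ with $J_{G\setminus e}+\langle g_e\rangle$ for the edge $e=\{p_{1_1},p_{1_2}\}$ of the cycle, runs the colon exact sequence for that edge with $J_{G\setminus e}:g_e=J_{(G\setminus e)_e}=J_{G\setminus e}$, and obtains the value $1+\sum s_{(j)}$ before returning to the main sequence. (Alternatively one can salvage your version by noting that the upper bound $\reg S/(I+J)\leq 2+\sum s_{(j)}$ at $s=2$ is still enough for Lemma~\ref{Lemma1.Reg}(b) in the main exact sequence, since it forces $\reg M_1 \leq \reg M_3 < \reg M_3+1$.) Either way, the $s=2$ case needs an explicit argument that your proposal does not supply.
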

\begin{proof} 
We take parity edge-binomials of $G$ in the same order as edge-binomials of $G$ taken in Theorem \ref{Thm2.U}(a). The proof is by induction on $s$. The statement holds for $s=1$, by Lemma \ref{lemma5.PU1}. Assume that the statement holds for $s-1$. We prove the statement for $s$ by descending induction on $i$. For $i=n-1$, the statement holds, by Lemma \ref{Lemma5.UC1dn}. Assume that the assertion holds for $s$ and $i+1$. We need to prove the statement for $s$ and $i$. Consider the following short exact sequence \begin{equation} \label{eq:pU1}
\begin{split}
     0 \longrightarrow \frac{S}{(g_1,\ldots,g_{i})+\I_{G}^{s}:g_{i+1}}(-2) & \longrightarrow \frac{S}{(g_1,\ldots,g_{i})+\I_{G}^{s}} \\
      & \longrightarrow \frac{S}{(g_1,\ldots,g_{i+1})+\I_{G}^{s}}   \longrightarrow  0. 
\end{split}
\end{equation}

By the induction hypothesis on $i$, it follows that $\reg {S}/{(g_1,\ldots,g_{i+1})+\I_{G}^{s}} = 2s + \sum_{j=1}^m{s_{(j)}}$. From Lemma \ref{Lemma1.PD} and Remark \ref{Rem1.pcolon} it follows that  $((g_1,\ldots,g_{i}):g_{i+1})+\I_{G}^{s-1} =(g_1,\ldots,g_{i})+\I_{G}^{s-1}+J_{K_{n}}$, for some $n < m$ (where $J_{K_n} = ( f_{kl} \mid k,l \in N_{H_i}(\alpha(g_{i+1})) \text{ or } k,l \in N_{H_i}(\beta(g_{i+1}))$). 
Set $I= (g_1,\ldots,g_{i})+\I_{G}^{s-1}$ and $J=J_{K_{n}}$. Consider the following two cases:

\textbf{Case 1:}
If  $((g_1,\ldots,g_{i}):g_{i+1})+\I_{G}^{s-1} = I$. From the induction hypothesis on $s$, it follows that $\reg{S}/{I} = 2(s-1) + \sum_{j=1}^{m+1}{s_{(j)}}$. Applying Lemma \ref{Lemma1.Reg}(b) to the short exact sequence (\ref{eq:U1}) yields 
 $$\reg \frac{S}{(g_1,\ldots,g_{i})+J_{G}^{s}}= 2s + \sum_{j=1}^{m+1}{s_{(j)}}.$$

\textbf{Case 2:}
For $((g_1,\ldots,g_{i}):g_{i+1})+\I_{G}^{s-1} =I + J$.  Consider the short exact sequence (\ref{eq:IJ}). Using the induction hypothesis on $s$, regularity of $I$ follows. The regularity of $J$ follows from Remark  \ref{Rem.FF}. To find the regularity of $I \cap J$ we need to prove the following claim.

\begin{claim} \label{Claim1}
$I \cap J = J \cdot( x_{k_{0}},y_{k_{0}},\I_{\mathcal{P}} )$, where $\mathcal{P}$ denotes the induced subgraph of $G$ on vertex set $V(G)\setminus k_0$ and $k_0$ denote the center of $H$. The proof of the claim is similar to the proof of a claim $4.1$ in \cite[Theorem 4.1]{AN}. In the proof instead of edge-binomials $f_{lk_0}$ and $f_{kk_0}$ we take parity edge-binomials $g_{lk_0}$ and $g_{kk_0}$, rest of the proof is similar.
\end{claim}

Now, from Theorem \ref{Thm3.parityIJ} it follows that $\reg S/(I\cap J) = 2+ \sum_{j=1}^{m}{s_{(j)}}$. Then applying Lemma \ref{Lemma1.Reg}(c) to the short exact sequence (\ref{eq:IJ}) yields $\reg{{S}/{(I+J)}} = 2(s-1)+ \sum_{j=1}^{m}{s_{(j)}}$, for all $s > 2$. Again applying  Lemma \ref{Lemma1.Reg}(b) to exact sequence (\ref{eq:U1}) one yields  $$\reg{\frac{S}{(g_1,\ldots,g_{i})+I_{G}^{s}}} = 2s + \sum_{j=1}^{m}{s_{(j)}}, \text{ for all } s > 2.$$

For $s=2$, first we compute the regularity of  ${S}/{(\I_{G} + J_{K_n})}$ for some $n<m$ (where $J_{K_n}=( f_{kl} \mid k,l \in N_{L_1}(\alpha(g_{i+1})) \text{ or } k,l \in N_{L_1}(\beta(g_{i+1}))$). Assume that $E(G)=E(\mathcal{T}_m) \cup \{k_0,p_{1_{s(1)+1}}\}$ be an edge set of $G$. Observe that induced subgraph on the vertex set $V(G)\setminus \{p_{1_1},p_{1_2}\}$ is a tree. We partition vertex set of $V(G)\setminus \{p_{1_1},p_{1_2}\}$ into $V_1 \sqcup V_2$. Without loss of generality one assume that the vertex $k_0 \in V_1$, then we add vertices $p_{1_1}$ and $p_{1_2}$ to $V_2$. Let $e = \{p_{1_1},p_{1_2}\}$ be an edge of $G$, note that $\phi(\I_{G}+J_{K_n})=J_{G\setminus e} + g_e$. Consider the following short exact sequence
\begin{equation} \label{eq:S2}
    0 \longrightarrow \frac{S}{J_{G\setminus e}:g_e}(-2) \longrightarrow \frac{S}{J_{G\setminus e}} 
  \longrightarrow \frac{S}{(J_{G\setminus e} + g_e)}   \longrightarrow  0,
\end{equation}
$G\setminus e$ is a tree that belongs to a class of $\mathcal{T}_m$ graph. From Remark \ref{rem1.Reg2} it follows that $\reg S/{J_{G\setminus e}} = 2 + \sum_{j=1}^{m}{s_{(j)}}$. From a part of the proof of \cite[Lemma 3.3]{A2021LSS} one has $J_{G\setminus e}:g_e = J_{(G\setminus e)_e}$ and here $J_{(G\setminus e)_e}= J_{G\setminus e}$. Therefore, applying Lemma \ref{Lemma1.Reg}(c) to (\ref{eq:S2}) one yields $$\reg \frac{S}{(J_{G\setminus e} + g_e)} =  2 + \sum_{j=1}^{m}{s_{(j)}} -1.$$
From Lemma \ref{Lemma1.PD} it follows that $(g_1,\ldots,g_{i})+\I_{G}^{2}:g_{i+1} = (g_1,\ldots,g_{i}):g_{i+1} + \I_{G} = {\I_{G} + J_{K_n}}.$ Again applying Lemma \ref{Lemma1.Reg}(b) to  exact sequence (\ref{eq:U1}) for $s=2$ one yields 
$$\reg{\frac{S}{(g_1,\ldots,g_{i})+\I_{G}^{2}}} = 4 + \sum_{j=1}^{m}{s_{(j)}}.$$ 
\end{proof}

\begin{lemma} \label{Lemma5.UC1dn}
Considering the hypothesis of Theorem \ref{Thm5.PPUC1}, and for all $s>1$, we have
$$\reg \frac{S}{(g_1,\ldots,g_{n-1})+ g_{n}^{s}} = 2s + \sum_{j=1}^{m}{s_{(j)}}.$$
\end{lemma}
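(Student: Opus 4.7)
The overall strategy mirrors that of Lemma~\ref{Lemma4.UC1dn}: first exploit the $d$-sequence property to collapse the colon ideal $(g_1,\ldots,g_{n-1}):g_n^s$ to $(g_1,\ldots,g_{n-1}):g_n$, then read off the regularity from a suitable short exact sequence. The central obstacle is that, unlike in the binomial setting, the short exact sequence $0 \to S/((g_1,\ldots,g_{n-1}):g_n)(-2) \to S/(g_1,\ldots,g_{n-1}) \to S/\mathcal{I}_G \to 0$ does \emph{not} determine $\reg S/((g_1,\ldots,g_{n-1}):g_n)$: by Lemma~\ref{lemma5.PU1} and Remarks~\ref{Rem1.phi} and \ref{rem1.Reg2}, both $\reg S/\mathcal{I}_G$ and $\reg S/(g_1,\ldots,g_{n-1}) = \reg S/\mathcal{I}_H$ equal $2 + \sum s_{(j)}$, so the equality case of Lemma~\ref{Lemma1.Reg} is unavailable. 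The plan is therefore to compute the colon ideal directly.

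By the ordering fixed in the proof of Theorem~\ref{Rem2.Parity2}, $g_n = g_e$ where $e = \{k_0, v\}$ is the cycle edge with $v$ a pendant of $H$, so $(g_1,\ldots,g_{n-1}) = \mathcal{I}_H$. Since $H = G \setminus e$ is bipartite while $G$ has odd girth (hence is not bipartite), Remark~\ref{Rem1.MCI}(c) applies to give $\mathcal{I}_H : g_e = \phi(J_{H_e})$, and $\reg S/(\mathcal{I}_H:g_e) = \reg S/J_{H_e}$ because $\phi$ is an isomorphism. The graph $H_e$ is obtained from $H$ by adding all edges among $N_H(k_0) = \{p_{1_1}, \ldots, p_{m_1}\}$ (the neighborhood of $v$ contributes nothing, being a singleton); together with the star edges $\{k_0, p_{i_1}\}$ already present, this produces a $K_{m+1}$ on $\{k_0, p_{1_1}, \ldots, p_{m_1}\}$ with the path $p_{i_1} - p_{i_2} - \cdots - p_{i_{s(i)+1}}$ attached at the free vertex $p_{i_1}$ for each $i$. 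Applying Remark~\ref{Rem1.GUG} then gives
\[
\reg S/J_{H_e} = \reg S/J_{K_{m+1}} + \sum_{i=1}^m \reg S/J_{P_{s(i)+1}} = 1 + \sum_{j=1}^m s_{(j)}.
\]

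To finish, the $d$-sequence condition applied with $i+1 = j = n$ yields $(g_1,\ldots,g_{n-1}) : g_n^s = (g_1,\ldots,g_{n-1}) : g_n$ for all $s \geq 1$. Feeding this into the short exact sequence
\[
0 \longrightarrow \frac{S}{(g_1,\ldots,g_{n-1}):g_n^s}(-2s) \longrightarrow \frac{S}{(g_1,\ldots,g_{n-1})} \longrightarrow \frac{S}{(g_1,\ldots,g_{n-1}) + g_n^s} \longrightarrow 0,
\]
the leftmost term has regularity $2s + 1 + \sum s_{(j)}$ and the middle term has regularity $2 + \sum s_{(j)}$. For $s > 1$ these differ strictly, so the equality case of Lemma~\ref{Lemma1.Reg}(c) gives $\reg S/((g_1,\ldots,g_{n-1}) + g_n^s) = \max\{2s + \sum s_{(j)},\, 2 + \sum s_{(j)}\} = 2s + \sum_{j=1}^m s_{(j)}$, as claimed.
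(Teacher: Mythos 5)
Your proposal is correct and follows essentially the same route as the paper: identify $(g_1,\ldots,g_{n-1})=\mathcal{I}_{G\setminus e}$, compute $\reg S/(\mathcal{I}_{G\setminus e}:g_n)=\reg S/J_{(G\setminus e)_e}=1+\sum_{j=1}^m s_{(j)}$ via Remark~\ref{Rem1.MCI}(c), collapse $:g_n^s$ to $:g_n$ by the $d$-sequence property, and conclude with Lemma~\ref{Lemma1.Reg}(c) applied to the exact sequence (\ref{eq:E4}). The only difference is that you justify the value of $\reg S/J_{(G\setminus e)_e}$ explicitly through the $K_{m+1}$-plus-paths gluing decomposition and Remark~\ref{Rem1.GUG}, a detail the paper leaves implicit.
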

\begin{proof}
Let $e$ be a corresponds to a parity edge-binomial $g_n$. From Lemma \ref{Rem1.MCI}(c) it follows that $\reg S/(\mathcal{I}_{G \setminus e}:g_n) = \reg S/J_{(G \setminus e)_e} = 2 + \sum_{j=1}^{m}{s_{(j)}}-1$. By using Remark \ref{Rem1.phi} and Remark \ref{rem1.Reg2}, one can obtain that $\reg {S}/{(g_1,\ldots,g_{n-1})} = 2 + \sum_{j=1}^{m}s_{(j)}$. Applying Lemma \ref{Lemma1.Reg}(c) to the short exact sequence (\ref{eq:E4}) one can obtain the desired result.
\end{proof}

\begin{remark}
    Note that the balloon graphs are a particular case of graphs discussed in Theorem \ref{Thm5.PPUC1}. Hence we obtained the result \cite[Theorem 5.7]{SZ} as a particular case.
\end{remark}

\begin{Theorem} \label{Thm5.PPU2}
Let $H \in \mathcal{T}_{m}$ be a tree. Let $G$ be a unicyclic graph on $[n]$ with odd girth, which is obtained by adding an edge between the center of $H$ and an internal vertex of $H$. Let $g_1,\ldots g_n$ be a $d$-sequence of parity edge-binomials of $G$ with $g_0 =  0 \in S$. Then, for any $i=0,1,\ldots,n-1$, and for all $s\geq 1$, we have
$$\reg \frac{S}{(g_1,\ldots,g_{i})+ \mathcal{I}_{G}^{s}} = 2s+ \sum_{j=1}^{m}{s_{(j)}}-1.$$
In particular, $\reg {S}/{\mathcal{I}_{G}^s} = 2s+ \sum_{j=1}^{m}{s_{(j)}}-1$, for all $s\geq 1$.
\end{Theorem}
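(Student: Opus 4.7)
The plan is to adapt the proof of Theorem~\ref{Thm4.U2} to the parity setting, in the same spirit that Theorem~\ref{Thm5.PPUC1} adapts the proof of Theorem~\ref{Thm4.UC1}. I would fix the ordering of parity edge-binomials $g_1,\ldots,g_n$ coming from Theorem~\ref{Rem2.Parity2}, so that $g_1,\ldots,g_{n-1}$ sit on the tree $H\in\mathcal{T}_m$ and $g_n$ corresponds to the edge joining the center $k_0$ of $H$ to an internal vertex of $H$. The induction is double: outer induction on $s\geq 1$, inner descending induction on $i$ from $i=n-1$ down to $i=0$.

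For $s=1$ the containment $(g_1,\ldots,g_i)\subseteq\mathcal{I}_G$ collapses the module to $S/\mathcal{I}_G$, and Lemma~\ref{lemma5.PU2} gives the value $1+\sum_{j=1}^m s_{(j)}$ in both girth sub-cases, matching $2(1)+\sum_{j=1}^m s_{(j)}-1$. For $s\geq 2$ and the inner base $i=n-1$, the $d$-sequence property together with Lemma~\ref{Lemma1.RD}(b) gives $(g_1,\ldots,g_{n-1}):g_n^s=(g_1,\ldots,g_{n-1}):g_n$; combining this with the regularities computed from Remark~\ref{Rem1.phi}, Remark~\ref{rem1.Reg2} and Remark~\ref{Rem1.MCI}(c), the short exact sequence
\[
0\longrightarrow \frac{S}{(g_1,\ldots,g_{n-1}):g_n^s}(-2s)\longrightarrow \frac{S}{(g_1,\ldots,g_{n-1})}\longrightarrow \frac{S}{(g_1,\ldots,g_{n-1})+g_n^s}\longrightarrow 0
\]
together with Lemma~\ref{Lemma1.Reg}(c) yields the predicted value.

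For the general inductive step consider the short exact sequence analogous to~(\ref{eq:pU1}),
\[
0\longrightarrow \frac{S}{((g_1,\ldots,g_i)+\mathcal{I}_G^s):g_{i+1}}(-2)\longrightarrow \frac{S}{(g_1,\ldots,g_i)+\mathcal{I}_G^s}\longrightarrow \frac{S}{(g_1,\ldots,g_{i+1})+\mathcal{I}_G^s}\longrightarrow 0,
\]
rewrite the colon ideal via Lemma~\ref{Lemma1.PD} as $((g_1,\ldots,g_i):g_{i+1})+\mathcal{I}_G^{s-1}$, and compute the inner colon using Remark~\ref{Rem1.pcolon} (since the subgraph $H_i$ associated to the first $i$ parity edge-binomials is bipartite). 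Two sub-cases arise, mirroring Theorem~\ref{Thm4.U2}: either the colon adds nothing new and the statement closes by the induction hypothesis on $s$ through Lemma~\ref{Lemma1.Reg}(b), or the colon introduces a copy $J=J_{K_r}$ of a complete-graph binomial edge ideal for some $r\leq m$, and one studies $I=(g_1,\ldots,g_i)+\mathcal{I}_G^{s-1}$, $J$, $I+J$ and $I\cap J$ via the Mayer--Vietoris sequence~(\ref{eq:IJ}).

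The main obstacle is identifying $I\cap J$ cleanly. Following Claim~\ref{Claim1} of Theorem~\ref{Thm5.PPUC1}, I expect $I\cap J=J\cdot\langle x_{k_0},y_{k_0},\mathcal{I}_{\mathcal{P}}\rangle$, where $\mathcal{P}$ is the induced subgraph of $G$ on $V(G)\setminus\{k_0\}$; the proof of this identification follows Claim~4.1 of \cite{AN} with parity edge-binomials $g_{\ell k_0}$ in place of $f_{\ell k_0}$, and the reverse containment is the delicate step because the generators of $\mathcal{I}_G^{s-1}$ mix $x_{k_0},y_{k_0}$ with the branch variables. Granting the claim, Theorem~\ref{Thm3.parityIJ} supplies $\reg S/(I\cap J)=2+\sum_{j=1}^m s_{(j)}$, Remark~\ref{Rem.FF} supplies $\reg S/J=1$, and Lemma~\ref{Lemma1.Reg} then forces $\reg S/(I+J)=2(s-1)+\sum_{j=1}^m s_{(j)}-1$ when $s\geq 3$, after which the main short exact sequence closes through Lemma~\ref{Lemma1.Reg}(b). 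The residual case $s=2$ does not fit this bound and must be handled separately as in Theorem~\ref{Thm5.PPUC1}: choose an edge $e$ of the odd cycle with both endpoints of degree $2$ in $G$, use $\phi$ from Remark~\ref{Rem1.phi} to rewrite $\mathcal{I}_G+J_{K_r}$ in the bipartite picture $J_{G\setminus e}+g_e$, and compute the regularity directly via a short exact sequence modelled on~(\ref{eq:S2}). This is the step where the girth-$3$ versus odd-girth-$\geq 5$ distinction enters, and it is dispatched exactly as in the proof of Lemma~\ref{lemma5.PU2}.
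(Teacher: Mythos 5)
Your proposal matches the paper's proof essentially step for step: the same double induction (outer on $s$, inner descending on $i$), the same short exact sequences (\ref{eq:pU1}), (\ref{eq:E4}) and (\ref{eq:IJ}), the same colon computations via Lemma~\ref{Lemma1.PD} and Remark~\ref{Rem1.pcolon}, the same identification $I\cap J=J*\langle x_{k_0},y_{k_0},\mathcal{I}_{\mathcal{P}}\rangle$, and the same appeal to Theorem~\ref{Thm3.parityIJ}. The one divergence is that your separate treatment of $s=2$ is not needed here: because the target value is $2s+\sum_{j}s_{(j)}-1$ (one less than in Theorem~\ref{Thm5.PPUC1}), already at $s=2$ one has $\reg S/(I\cap J)=2+\sum_{j}s_{(j)}\neq 1+\sum_{j}s_{(j)}=\reg\bigl(S/I\oplus S/J\bigr)$, so Lemma~\ref{Lemma1.Reg}(c) gives equality for all $s\geq 2$ and the paper runs Case~2 uniformly; your extra direct computation is harmless but redundant, and the girth-$3$ versus odd-girth-$\geq 5$ distinction you invoke there does not actually surface in this theorem, whose formula is uniform in the girth.
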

\begin{proof}
We consider parity edge-binomials of $G$ in the same order as edge-binomials of $G$ in Theorem \ref{Thm2.U}(c). Proof is by induction on $s$. For $s=1$ the statement is proved in Lemma \ref{lemma5.PU2}. Assume that the statement is true for $s-1$. The statement for $s$ is proved by descending induction on $i$. For $i=n-1$, the statement holds, by Lemma \ref{Lemma5.PU2p2}. Assume that the assertion holds for $i+1$. It remains to show that the statement holds for $s$ and $i$. Consider the short exact sequence (\ref{eq:pU1}). The statement holds for ${S}/{((g_1,\ldots,g_{i+1})+\I_{G}^{s})}$ by the reverse induction on $i$. By a similar argument as in Theorem \ref{Thm5.PPUC1} that we take $I= (g_1,\ldots,g_{i})+\I_{G}^{s-1}$ and $J=J_{K_{n}}$. Consider the following two cases:

\textbf{Case 1:}
If  $((g_1,\ldots,g_{i}):g_{i+1})+\I_{G}^{s-1} = I$, then from induction hypothesis on $s$ it follows that $\reg {S}/{I} = 2(s-1)+ \sum_{j=1}^{m}{s_{(j)}}-1$. Now, from Lemma \ref{Lemma1.Reg}(b) it follows that $$\reg \frac{S}{(g_1,\ldots,g_{i})+\I_{G}^{s}}= 2s+ \sum_{j=1}^{m}{s_{(j)}}-1.$$ 

\textbf{Case 2:}
If $((g_1,\ldots,g_{i}):g_{i+1})+\I_{G}^{s-1} =I + J$. Consider the  short exact sequence (\ref{eq:IJ}). To compute the regularity of $I \cap J$, first, we claim that $I \cap J = J \cdot( x_{k_{0}},y_{k_{0}},\I_{\mathcal{P}} )$, where $\mathcal{P}$ denotes the induced subgraph of $G$ on vertex set $V(G)\setminus k_0$ and $k_0$ denote the center of $H$.

The proof of the claim is similar to the proof of Claim 5.1 of Theorem \ref{Thm5.PPUC1}. It follows from Theorem \ref{Thm3.parityIJ} that $\reg S/(I \cap J) = 2 + \sum_{j=1}^{m}{s_{(j)}}$. By applying Lemma \ref{Lemma1.Reg}(c) to the short exact sequence (\ref{eq:IJ}) we  obtain that $\reg {S}/({I+J}) = 2(s-1) +\sum_{j=1}^{m}{s_{(j)}}-1$, for all $s\geq 2$, i.e. 
$$\reg{\frac{S}{(g_1,\ldots,g_{i})+J_{G}^{s}: g_{i+1}}} = 2(s-1) +\sum_{j=1}^{m}{s_{(j)}}-1,$$
for all $s\geq 2$. Thus by applying Lemma \ref{Lemma1.Reg}(b) to the short exact sequence (\ref{eq:pU1}) one yields 

$$\reg \frac{S}{((g_1,\ldots,g_{i})+J_{G}^{s})} = 2s + +\sum_{j=1}^{m}{s_{(j)}}-1, \text{ for all } s \geq 2.$$ 
\end{proof}

\begin{lemma} \label{Lemma5.PU2p2}
Under the assumption in Theorem \ref{Thm5.PPU2}, and for all $s>2$, we have 
$$\reg \frac{S}{(g_1,\ldots,g_{n-1})+ g_{n}^{s}} = 2s +\sum_{j=1}^{m}{s_{(j)}}-1.$$
\end{lemma}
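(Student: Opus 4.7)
The proof will mirror Lemma \ref{Lemma4.U2p2} from the binomial setting, with the parity versions of the colon-ideal statements substituted in. The plan is to reduce the whole claim to a single application of Lemma \ref{Lemma1.Reg}(c), powered by the $d$-sequence stabilization of colons.

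First I would establish the key colon-stabilization $(g_1,\ldots,g_{n-1}):g_n^s = (g_1,\ldots,g_{n-1}):g_n$ for every $s\geq 1$. Taking $i=n-1$ and $j=n$ in Definition \ref{Def-d-sequence} supplies the case $s=2$, and an easy induction (equivalently, an appeal to Lemma \ref{Lemma1.RD}(b), which applies because $\textnormal{char}(k)\neq 2$ is forced by Theorem \ref{Rem2.Parity2}) promotes this to all $s$.

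Next I would read off the two regularities appearing in the short exact sequence (\ref{eq:E4}):
\begin{equation*}
0 \longrightarrow \frac{S}{(g_1,\ldots,g_{n-1}):g_n^s}(-2s) \longrightarrow \frac{S}{(g_1,\ldots,g_{n-1})} \longrightarrow \frac{S}{(g_1,\ldots,g_{n-1})+g_n^s} \longrightarrow 0.
\end{equation*}
The middle module is $S/\I_H$, where $H\in\mathcal{T}_m$ is the bipartite tree obtained by deleting the edge corresponding to $g_n$; Remarks \ref{Rem1.phi} and \ref{rem1.Reg2} then yield $\reg S/(g_1,\ldots,g_{n-1}) = \reg S/J_H = 2+\sum_{j=1}^m s_{(j)}$. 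For the colon, the value $\reg S/((g_1,\ldots,g_{n-1}):g_n) = \sum_{j=1}^m s_{(j)}$ is already produced inside Lemma \ref{lemma5.PU2}: Remark \ref{Rem1.MCI}(c) rewrites $\I_H:g_n$ as $\phi(J_{H_{e}})$, whose regularity is computed via Remark \ref{Rem.FF} (odd girth at least $5$) or Remark \ref{Rem1.GUG} applied to the block decomposition into a clique with attached paths (girth $3$).

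Combining these inputs, the shifted leftmost module has regularity $2s+\sum_{j=1}^m s_{(j)}$, which for $s>2$ strictly exceeds $2+\sum_{j=1}^m s_{(j)}$. The inequality in Lemma \ref{Lemma1.Reg}(c) is therefore an equality, and
\begin{equation*}
\reg \frac{S}{(g_1,\ldots,g_{n-1})+g_n^s} = 2s + \sum_{j=1}^m s_{(j)} - 1,
\end{equation*}
as asserted. The only nontrivial ingredient is the colon-ideal regularity, imported verbatim from Lemma \ref{lemma5.PU2}; once that is granted the remainder is a mechanical combination of $d$-sequence stabilization with the three-part regularity lemma.
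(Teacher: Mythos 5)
Your proposal is correct and follows the paper's proof in all structural respects: the same two short exact sequences (\ref{eq:E1}) and (\ref{eq:E4}), the same $d$-sequence stabilization $(g_1,\ldots,g_{n-1}):g_n^s=(g_1,\ldots,g_{n-1}):g_n$, the same value $2+\sum_{j=1}^{m}s_{(j)}$ for the middle module, and the same final application of Lemma \ref{Lemma1.Reg}(c). The one genuine divergence is how you obtain $\reg S/((g_1,\ldots,g_{n-1}):g_n)=\sum_{j=1}^{m}s_{(j)}$: the paper gets it indirectly, by applying Lemma \ref{Lemma1.Reg}(a) to the sequence (\ref{eq:E1}) using the already-established values $\reg S/\I_G=1+\sum_{j=1}^{m}s_{(j)}$ (Lemma \ref{lemma5.PU2}) and $\reg S/(g_1,\ldots,g_{n-1})=2+\sum_{j=1}^{m}s_{(j)}$, whereas you compute the colon directly via Remark \ref{Rem1.MCI}(c). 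Your route works, but be careful with the claim that this value is ``imported verbatim from Lemma \ref{lemma5.PU2}'': that lemma colons out a \emph{different} edge of the cycle (one with endpoint degrees $(2,2)$ in the odd-girth $\geq 5$ case, or $(m+1,2)$ in the girth-$3$ case), not the edge $\{k_0,v\}$ with $\deg_G(v)=3$ attached to $g_n$. For your edge the graph $(G\setminus e)_e$ is a block graph consisting of a $K_{m+1}$ on $k_0$ and its neighbors, one triangle around $v$, and the remaining path edges, all glued at free vertices; Remark \ref{Rem1.GUG} then gives $1+1+\bigl(\sum_{j=1}^{m}s_{(j)}-2\bigr)=\sum_{j=1}^{m}s_{(j)}$, so the number is the same, but this is a computation you must actually carry out rather than cite. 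The trade-off is that your version is self-contained at the level of the colon ideal, while the paper's version recycles the $s=1$ regularity and avoids analyzing the block structure of $(G\setminus e)_e$ for this particular edge.
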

\begin{proof}
Consider the short exact sequence (\ref{eq:E1}). From Lemma \ref{lemma5.PU2} one has the regularity of the module $S/\I_G$. The graph associated to parity edge-binomials $g_1,\ldots,g_{n-1}$ is a tree. From Remark \ref{Rem.FF} it follows that $\reg S/(g_1,\ldots,g_{n-1}) = 2 +\sum_{j=1}^{m}{s_{(j)}}$. Applying Lemma \ref{Lemma1.Reg}(a) to exact sequence (\ref{eq:E1}) it follows that $\reg S/((g_1,\ldots,g_{n-1}): g_{n})= \sum_{j=1}^{m}{s_{(j)}}$. Since $g_1,\ldots,g_{n}$ form a $d$-sequence, one has $(g_1,\ldots,g_{n-1}): g_{n}^{s} = (g_1,\ldots,g_{n-1}): g_{n}$. By applying Lemma \ref{Lemma1.Reg}(c) to the short exact sequence (\ref{eq:E4}) yields   $\reg {S}/{((g_1,\ldots,g_{n-1})+g_{n}^{s})} = 2s  +\sum_{j=1}^{m}{s_{(j)}}-1$, for all $s \geq 2$, as desired.
\end{proof}

\begin{remark}
      Note that the graphs were obtained by adding an edge $e$ between two internal vertices of a path $P_n$ such that the girth of $G$ is odd are a particular case of graphs discussed in Theorem \ref{Thm5.PPU2}. Hence we obtained the result \cite[Theorem 5.9]{SZ}.
\end{remark}

\subsection*{Conclusion} \begin{proof}[Proof of Theorem~\ref{U1.1}]
The statement $(i)$ follows from Theorems \ref{Thm4.UC1} and \ref{Thm5.PPUC1}, and Remark \ref{rem3.iG}(a), and the statement $(ii)$ follows from Theorems \ref{Thm4.U2} and \ref{Thm5.PPU2}, and Remark \ref{rem3.iG}(b).
\end{proof}

\end{document}